\documentclass[english]{amsart}

\usepackage{amsmath}

\usepackage{amssymb}

\usepackage{amscd} 
\usepackage{tabularx}

\usepackage[all,cmtip,line]{xy}

\newcommand{\F}{\mathbb{F}}

\DeclareMathOperator{\Frob}{Frob}

\newcommand{\To}{\longrightarrow} 
\newcommand{\into}{\hookrightarrow} 
 
\newcommand{\isoto}{\stackrel{\sim}{\To}} 
\newcommand{\p}{\mathfrak{p}}

\newcommand{\bigO}{\mathcal{O}}

\newcommand{\R}{\mathbb{R}} 
\newcommand{\Z}{\mathbb{Z}} 
\newcommand{\A}{\mathbb{A}} 
\newcommand{\Q}{\mathbb{Q}}

\newcommand{\C}{\mathbb{C}}

\newcommand{\rhobar}{\overline{\rho}}

\newcommand{\cor}{\operatorname{cor}}

\newcommand{\Gal}{\operatorname{Gal}} 
\newcommand{\GL}{\operatorname{GL}}

\newcommand{\Qbar}{\overline{\Q}} 
\newcommand{\Qp}{\Q_p} 
\newcommand{\Ql}{\Q_l}

\newcommand{\Qlbar}{\overline{\Q}_l} 

\newcommand{\Qpbar}{\overline{\Q}_p} 
 
\newcommand{\Flbar}{\overline{\F}_l}

\newcommand{\Art}{\operatorname{Art}} 

\newcommand{\Res}{\operatorname{Res}}

\newcommand{\Ind}{\operatorname{Ind}} 
\newcommand{\SL}{\operatorname{SL}} 
\newcommand{\PSL}{\operatorname{PSL}} 
\newcommand{\ad}{\operatorname{ad}}

\newcommand{\End}{\operatorname{End}} 
\newcommand{\Hom}{\operatorname{Hom}}

\newcommand{\Sym}{\operatorname{Sym}} 
\newcommand{\rec}{\operatorname{rec}} 
\newcommand{\WD}{\operatorname{WD}} 
\newcommand{\nind}{\operatorname{n-Ind}} 
\newcommand{\Irr}{\operatorname{Irr}}
\newcommand{\WDRep}{\operatorname{WDRep}}
 
\newcommand{\Sp}{\operatorname{Sp}} 
 
\newcommand{\diag}{\operatorname{diag}}

\usepackage{amsthm}

\newtheorem*{thmn}{Theorem} 
 
\newtheorem{thm}{Theorem}[section]
\newtheorem{theorem}[thm]{Theorem}
 
\newtheorem{corollary}[thm]{Corollary} 
\newtheorem{lemma}[thm]{Lemma} 
\newtheorem{prop}[thm]{Proposition} 
\newtheorem{proposition}[thm]{Proposition} 

\newtheorem{conj}[thm]{Conjecture} \theoremstyle{definition} 
\newtheorem{defn}[thm]{Definition} \theoremstyle{remark} 
 \numberwithin{equation}{subsection}

\setcounter{tocdepth}{1}
\begin{document}

\title{The Sato-Tate Conjecture for modular forms of weight 3} 
\author{Toby Gee} \email{tgee@math.harvard.edu} \address{Department of
  Mathematics, Harvard University}\thanks{The author was partially
  supported by NSF grant DMS-0841491}

\begin{abstract} We prove a natural analogue of the Sato-Tate conjecture for modular forms of weight 2 or 3 whose associated automorphic representations are a twist of the Steinberg representation at some finite place.

\end{abstract}
\maketitle 
\tableofcontents 
\section{Introduction}The Sato-Tate conjecture is a conjecture about
the distribution of the number of points on an elliptic curve over
finite fields. Specifically, if $E$ is an elliptic curve over $\Q$
without CM, then for each prime $l$ such that $E$ has good reduction
at $l$ we
set $$a_l:=1+l-\#E(\F_l).$$ Then the Sato-Tate conjecture states that the quantities
$\cos^{-1}(a_l/2\sqrt{l})$ are equidistributed with respect to the
measure $$\frac{2}{\pi}\sin^2\theta d\theta$$ on
$[0,\pi]$. Alternatively, by the Weil bounds for $E$, the
polynomial $$X^2-a_lX+l=(X-\alpha_ll^{1/2})(X-\beta_ll^{1/2})$$
satisfies $|\alpha_l|=|\beta_l|=1$, and there is a well-defined
conjugacy class $x_{E,l}$ in $SU(2)$, the conjugacy class of the
matrix $$\begin{pmatrix} 
\alpha_l & 0\\ 0&\beta_l \end{pmatrix}.$$The Sato-Tate conjecture is
then equivalent to the statement that the classes $x_{E,l}$ are
equidistributed with respect to the Haar measure on $SU(2)$.

Tate observed that the conjecture would follow from properties of the
symmetric power 
$L$-functions  of $E$,
specifically that these $L$-functions (suitably normalised) should
have nonvanishing analytic continuation to the region $\Re s\geq
1$. This would follow  (given the modularity of elliptic curves) from the Langlands conjectures (specifically, it
would be a consequence of the symmetric power functoriality from
$\GL_2$ to $\GL_n$ for all $n$). Unfortunately, proving this functoriality appears to be
well beyond the reach of current techniques. However, Harris,
Shepherd-Baron and Taylor observed that the required analytic
properties would follow from a proof of the potential automorphy of
the symmetric power $L$-functions (that is, the automorphy of the
$L$-functions after base change to some extension of $\Q$), and were able
to use Taylor's potential automorphy techniques to prove the Sato-Tate
conjecture for all elliptic curves $E$ with non-integral
$j$-invariant (see \cite{hsbt}).

There are various possible generalisations of the Sato-Tate
conjecture; if one wishes to be maximally ambitious, one could
consider equidistribution results for the Satake parameters of rather
general automorphic representations (see for example section 2 of
\cite{MR546619langlands}). Again, such results appear to be well
beyond the range of current technology. There is, however, one special
case that does seem to be reasonable to attack, which is the case of
Hilbert cuspidal eigenforms of regular weight. In this paper, we prove
a natural generalisation of the Sato-Tate conjecture for modular
newforms (over $\Q$) of weight 2 or 3, subject to the natural analogue
of the condition that an elliptic curve has non-integral
$j$-invariant. We note that previously the only modular forms for
which the conjecture was known were those corresponding to elliptic
curves; in particular, there were no examples of weight 3 modular
forms for which the conjecture was known. After this paper was made
available, the conjecture was proved for all modular forms of weight
at least 2 in \cite{BLGHT}, by rather different methods.

Our approach is similar to that of \cite{hsbt}, and we are fortunate
in being able to quote many of their results. Indeed, it is
straightforward to check that Tate's argument
shows that the conjecture would follow from the potential automorphy
of the symmetric powers of the $l$-adic Galois representations
associated to a modular form. One might then hope to prove this
potential automorphy in the style of \cite{hsbt}; one would proceed by
realising the symmetric powers of the mod $l$ Galois representation
geometrically in such a way that their potential automorphy may be established,
and then deduce the potential automorphy of the $l$-adic
representations by means of the modularity lifting theorems of
\cite{cht} and \cite{tay06}. 

It turns out that this simple strategy encounters some significant
obstacles. First and foremost, it is an unavoidable limitation of the
known potential automorphy methods that they can only deduce that a
mod $l$ Galois representation is automorphic of minimal
weight (which we refer to as ``weight $0$''). However, the symmetric powers of the Galois representations
corresponding to modular forms of weight greater than 2 are never
automorphic of minimal weight, so one has no hope of directly proving
their potential automorphy in the fashion outlined above without some
additional argument. If, for example, one knew the weight part of
Serre's conjecture for $\GL_n$ (or even for unitary groups) one would
be able to deduce the required results, but this appears to be an
extremely difficult problem in general. There is, however, one case in
which the analysis of the Serre weights is rather easier, which is the
case that the $l$-adic Galois representations are ordinary. It is this
observation that we exploit in this paper.

In general, it is anticipated that for a given  newform $f$ of weight
$k\geq2$, there is a density one set of primes $l$ such that there
is an ordinary $l$-adic Galois representation corresponding to
$f$. Unfortunately, if $k>3$ then it is not even known that there is
an infinite set of such primes; this is the reason for our restriction to
$k=2$ or $3$. In these cases, one may use the Ramanujan conjecture and Serre's
form of the Cebotarev density theorem (see \cite{MR644559}) to prove
that the set of $l$ which are ``ordinary'' in this sense has density
one, via an argument that is presumably well-known to the experts
(although we have not been able to find the precise argument that we
use in the literature). We note that it is important for us to be able
to choose $l$ arbitrarily large in certain arguments (in order to
satisfy the hypotheses of the automorphy lifting theorems of
\cite{tay06}), so it does not appear to be possible to apply our
methods to any modular forms of weight greater than $3$. Similarly,
we cannot prove anything for Hilbert modular forms of parallel
weight $3$ over any field other than $\Q$.

We now outline our arguments in more detail, and explain exactly what
we prove. The early sections of the paper are devoted to proving the
required potential automorphy results. In section \ref{notation} we
recall some basic definitions and results from \cite{cht} on the
existence of Galois representations attached to regular automorphic
representations of $\GL_n$ over totally real and CM fields, subject to
suitable self-duality hypotheses and to the existence of finite places
at which the representations are square integrable. Section
\ref{modular forms} recalls some standard results on the Galois
representations attached to modular forms, and proves the result
mentioned above on the existence of a density one set of primes for
which there is an ordinary Galois representation.

In section \ref{in weight 0} we prove the potential automorphy in
weight $0$ of the
symmetric powers of the residual Galois representations associated to
a modular form, under the hypotheses that the residual Galois
representation is ordinary and irreducible, and the  automorphic representation
corresponding to the modular form is an unramified twist of the
Steinberg representation at some finite place. The latter condition
arises because of restrictions of our knowledge as to when there are
Galois representations associated to automorphic representations on
unitary groups, and it is anticipated that it will be possible to
remove it in the near future. That would then allow us to prove our
main theorems for any modular forms of weights $2$ or $3$ which are
not of CM type. (Note added in proof: such results are now available,
cf. \cite{shingaloisreps}, \cite{chenevierharris},
\cite{guerberoff2009modularity}, and it is thus an easy exercise to deduce
our main results without any Steinberg assumption.)

One approach to proving the potential automorphy result in weight $0$
would be to mimic the proofs for elliptic curves in \cite{hsbt}. In
fact we can do better than this, and are able to directly utilise
their results. We are reduced to proving that after making
a quadratic base change and twisting, the mod $l$ representation
attached to our modular form is, after a further base change,
congruent to a mod $l$ representation arising from a certain
Hilbert-Blumenthal abelian variety. This is essentially proved in
\cite{tay02}, and we only need to make minor changes to the proofs in
\cite{tay02} in order to deduce the properties we need. We can then
directly apply one of the main results of \cite{hsbt} to deduce the
automorphy of the even-dimensional symmetric powers of the
Hilbert-Blumenthal abelian variety, and after twisting back we deduce
the required potential automorphy of our residual
representations. Note that apart from resulting in rather clean
proofs, the advantage of making an initial congruence to a Galois
representation attached to an abelian variety and then using the
potential automorphy of the symmetric powers of this abelian variety
is that we are able to obtain local-global compatibility at all finite
places (including those dividing the residue characteristic). This
compatibility is not yet available for automorphic representations on
unitary groups in general, and is needed in our subsequent
arguments. In particular, it tells us that the automorphic
representations of weight 0 which correspond to the symmetric powers
of the $l$-adic representations coming from our Hilbert-Blumenthal
abelian variety are ordinary at $l$.

In section \ref{hida theory} we exploit this ordinarity to deduce that
the even-dimensional symmetric powers of the mod $l$ representations are potentially
automorphic of the ``correct'' weight. This is a basic consequence of
Hida theory for unitary groups, but we are not aware of any reference
that proves the precise result we need. Accordingly, we provide a
proof in the style of the arguments of \cite{taylorthesis}. There is
nothing original in this section, and as the arguments are somewhat
technical the reader may wish to skip it on a first reading. 

The results of the preceding sections are combined in section
\ref{sec:potential automorphy} to establish the required potential
automorphy results for $l$-adic (rather than mod $l$)
representations. This essentially comes down to checking the
hypotheses of the modularity lifting theorem that we wish to apply
from \cite{tay06}, which follow from the analogous arguments in
\cite{hsbt} together with the conditions that we have imposed in our
potential automorphy arguments. It is here that we need the freedom to
choose $l$ to be arbitrarily large, which results in our restriction
to weights $2$ and $3$.

Finally, in section \ref{sec:sato tate} we deduce the form of the
Sato-Tate conjecture mentioned above. As in \cite{hsbt} we have only
proved the potential automorphy of the even-dimensional symmetric
powers of the $l$-adic representations associated to our modular form,
and we deduce the required analytic properties for the $L$-functions
attached to odd-dimensional symmetric powers via an argument with
Rankin-Selberg convolutions exactly analogous to that of
\cite{hsbt}. In fact, we need to prove the same results for the
$L$-functions of certain twists of our representations by finite-order
characters, but this is no more difficult.

We now describe the form of the final result, which is slightly
different from that for elliptic curves, because our modular forms may
have non-trivial nebentypus (and indeed are required to do so if they
have weight $3$). Suppose that the newform $f$ has level $N$, nebentypus
$\chi_f$ and weight $k$; then the image of $\chi_f$ is precisely the
$m$-th roots of unity for some $m$. Then if $p\nmid N$ is a prime, we
know that
if $$X^2-a_pX+p^{k-1}\chi_f(p)=(X-\alpha_{p}p^{(k-1)/2})(X-\beta_{p}p^{{(k-1)/2}}) $$where
$a_p$ is the eigenvalue of $f$ for the Hecke operator $T_p$, then the
matrix $$\begin{pmatrix}\alpha_{p} & 0\\ 0&
  \beta_{p} \end{pmatrix}$$defines a conjugacy class $x_{f,p}$ in
$U(2)_{m}$, the subgroup of $U(2)$ of matrices with determinant an
$m$-th root of unity. Then our main result is

\begin{thmn}
  If $f$ has weight $2$ or $3$ and the associated automorphic
  representation is a twist of the Steinberg representation at some
  finite place, then the conjugacy classes $x_{f,p}$ are
  equidistributed with respect to the Haar measure on $U(2)_{m}$
  (normalised so that $U(2)_{m}$ has measure 1).
\end{thmn}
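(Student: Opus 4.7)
The plan is to apply Tate's observation as implemented for elliptic curves in \cite{hsbt}. By the Weyl equidistribution criterion on the compact group $U(2)_{m}$, the statement is equivalent to showing that, for every non-trivial irreducible representation $\sigma$ of $U(2)_{m}$, the $L$-function $\prod_{p\nmid N}\det(1-\sigma(x_{f,p})p^{-s})^{-1}$ is holomorphic and non-zero on the line $\Re s = 1$. The irreducible representations of $U(2)_{m}$ are the restrictions from $U(2)$ of $\Sym^n \otimes (\det)^{a}$ with $n \geq 0$ and $a \in \Z/m\Z$, so it suffices to prove the required analytic properties for the $L$-functions $L(s, \Sym^n r_l(f) \otimes \chi)$, where $r_l(f)$ is the $l$-adic Galois representation attached to $f$, $n \geq 1$, and $\chi$ runs over the finite set of finite-order characters of $G_\Q$ whose order divides $m$. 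By Brauer induction applied to the solvable base-change field, together with the classical non-vanishing of cuspidal automorphic $L$-functions on $\Re s = 1$, these analytic properties will follow from \emph{potential automorphy} of each $\Sym^n r_l(f) \otimes \chi$.

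For even $n$ I would argue as follows. By the density-one result established in section \ref{modular forms}, choose $l$ to be an arbitrarily large prime such that $r_l(f)$ is ordinary and $\bar r_l(f)$ is absolutely irreducible with large image. Following \cite{tay02} with minor modifications, produce a totally real Galois extension $F/\Q$, a finite-order character $\psi$, and a Hilbert--Blumenthal abelian variety $A/F$ such that, after a further solvable base change and suitable twist, $\bar r_l(f)|_{G_F}$ is equivalent to $\bar r_l(A)\otimes\bar\psi$. Applying the potential automorphy theorem of \cite{hsbt} to the even symmetric powers of the $l$-adic representation of $A$ and twisting back then yields potential automorphy of $\Sym^n \bar r_l(f)\otimes\bar\chi$ in weight $0$. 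Because the argument proceeds via an abelian variety, local-global compatibility at $l$ is known, and so the resulting automorphic lift is ordinary at $l$. Standard Hida theory for unitary groups, set up in the style of \cite{taylorthesis}, then produces a second potentially automorphic lift whose Hodge--Tate weights match those of $\Sym^n r_l(f) \otimes \chi$; the ordinary modularity lifting theorem of \cite{tay06} finally promotes this mod $l$ congruence to potential automorphy of $\Sym^n r_l(f) \otimes \chi$ itself.

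For odd $n$ I would reduce to the even case via a Rankin--Selberg argument identical to that of \cite{hsbt}. The Clebsch--Gordan decomposition $\Sym^{n-1} r \otimes r \simeq \Sym^{n} r \oplus \Sym^{n-2} r \otimes \det r$ expresses the Rankin--Selberg convolution of $\Sym^{n-1} r_l(f)$ with $r_l(f)$ (suitably twisted) as the product of $L(s, \Sym^n r_l(f) \otimes \chi)$ with $L$-functions of even symmetric powers; since Rankin--Selberg $L$-functions of cuspidal automorphic forms are holomorphic and non-zero on $\Re s = 1$, the needed analytic properties at odd $n$ follow from those at even $n$ together with induction on $n$.

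The main obstacle is the weight discrepancy at the potential-automorphy step: the \cite{hsbt} construction only produces weight-$0$ automorphy, but for $f$ of weight $k \geq 3$ no symmetric power of $r_l(f)$ has minimal Hodge--Tate weights. Circumventing this via ordinarity and Hida theory is what forces both of our running restrictions: we need $l$ to be ordinary for $r_l(f)$ (to invoke Hida theory and the ordinary lifting theorem of \cite{tay06}) and simultaneously we need the freedom to take $l$ arbitrarily large (to verify the big-image hypotheses of \cite{tay06}). These two demands can be met together only for $k \in \{2,3\}$, since for $k \geq 4$ even the infinitude of ordinary primes is not known, and it is precisely this bottleneck that the subsequent sections of the paper are designed to overcome in the cases at hand.
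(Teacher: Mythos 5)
Your overall architecture is the same as the paper's: Weyl equidistribution on $U(2)_m$ reduces the theorem to holomorphy and non-vanishing of $L(\chi_f^a\otimes\Sym^b f,s)$ on the appropriate half-plane; these analytic properties follow from potential automorphy via Brauer and solvable base change; potential automorphy is proved by realising $\rhobar_{f,\lambda}$ from an $N$-HBAV following \cite{tay02}, quoting \cite{hsbt} for the weight-$0$ automorphy, changing weight by Hida theory on a unitary group, and then invoking the ordinary modularity lifting theorem of \cite{tay06}; odd primes $l$ are chosen large and ordinary, which is possible for $k\in\{2,3\}$ by the density-one argument. All of this matches the paper.

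However, you have the \emph{parity} of the symmetric powers reversed, and this is a genuine gap. You propose to run the direct potential-automorphy argument (HBAV $\to$ \cite{hsbt} $\to$ Hida $\to$ \cite{tay06}) for \emph{even} symmetric powers $\Sym^n$ ($n$ even), and then to handle \emph{odd} symmetric powers via Rankin--Selberg. But Theorem 4.1 of \cite{hsbt} -- and the paper's Theorem \ref{thm:potential modularity in weight 0 for sym of ab var} quoting it -- establishes potential automorphy of $\Sym^{n-1}A$ only for $n$ in a finite set of \emph{even} positive integers, i.e., only for the \emph{odd} symmetric powers $\Sym^m$ with $m$ odd (these are the even-dimensional, symplectic ones). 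There is no direct potential-automorphy result in \cite{hsbt} for the odd-dimensional, orthogonal representations $\Sym^m$ with $m$ even, and the paper never claims one. The correct induction is the paper's: for $b$ odd, apply Corollary \ref{actual modularity in the given weights, including for potentialy steinberg} with $\mathcal{N}=\{2,b+1\}$ to get automorphy of $\Sym^b f$ over each $F_j$ directly; then the Rankin--Selberg convolution of $\Sym^b$ with the standard representation, together with the already-known even case $\Sym^{b-1}$ (with $\Sym^2$ handled separately by Gelbart--Jacquet \cite{MR533066}), yields the analytic properties for the even symmetric power $\Sym^{b+1}$. As written, your proof would stall at the first step, since the ingredient you are applying does not give what you need for even $n$. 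A related slip: with $n$ odd, the Clebsch--Gordan companion $\Sym^{n-2}r\otimes\det r$ in your decomposition is an \emph{odd} symmetric power, not an even one, so even your bookkeeping in the Rankin--Selberg step does not hang together as stated; in the paper's version, with $b$ odd, both $\Sym^{b+1}$ and $\Sym^{b-1}$ are even and the induction closes cleanly.

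Everything else -- the appeal to Serre's equidistribution criterion, the use of Wiles's argument with \cite{MR644559} for density-one ordinarity, the need for large-image and large $l$ to satisfy the bigness hypotheses of \cite{tay06}, the role of local-global compatibility at $l$ coming from the abelian variety, and the observation that $k\leq 3$ is forced by the ordinarity requirement -- is in agreement with the paper. Fix the parity and the argument is the paper's.
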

One can make this more concrete by restricting to primes $p$ such that
$\chi_f(p)$ is a specific $m$-th root of unity; see the remarks at the
end of section \ref{sec:sato tate}.

We would like to thank Thomas Barnet-Lamb, David Geraghty and Richard
Taylor for various helpful discussions during the writing of this
paper.

\section{Notation and assumptions}\label{notation} We let $\epsilon$
denote the $l$-adic cyclotomic character, regarded as a character of
the absolute Galois group of a number field or of a completion of a
number field at a finite place. We sometimes use the same notation for
the mod $l$ cyclotomic character; it will always be clear from the
context which we are referring to. We denote Tate twists in the usual
way, i.e. $\rho(n):=\rho\otimes\epsilon^n$. We write $\bar{K}$ for a
separable closure of a field $K$. If $x$ is a finite place of a number
field $F$, we will write $I_x$ for the inertia subgroup of
$\Gal(\overline{F}_x/F_x)$. We fix an algebraic closure $\Qbar$ of
$\Q$, and regard all finite extensions of $\Q$ as being subfields of
$\Qbar$. We also fix algebraic closures $\Qpbar$ of $\Qp$ for all
primes $p$, and embeddings $\Qbar\into\Qpbar$.

We need several incarnations of the local Langlands correspondence. Let $K$ be a finite extension of $\Qp$, and $l\neq p$ a prime. We have a canonical isomorphism $$\Art_K:K^\times\to W_K^{ab}$$ normalised so that geometric Frobenius elements correspond to uniformisers. Let $\Irr(\GL_n(K))$ denote the set of isomorphism classes of irreducible admissible representations of $\GL_n(K)$ over $\C$, and let $\WDRep_n(W_K)$ denote the set of isomorphism classes of $n$-dimensional Frobenius semi-simple complex Weil-Deligne  representations of the Weil group $W_K$ of $K$. The main result of \cite{ht} is that there is a family of bijections $$\rec_K:\Irr(\GL_n(K))\to\WDRep_n(W_K)$$ satisfying a number of properties that specify them uniquely (see the introduction to \cite{ht} for a complete list). Among these properties are:

\begin{itemize}
\item If $\pi\in\Irr(\GL_1(K))$ then $\rec_K(\pi)=\pi\circ\Art_K^{-1}$.
\item $\rec_K(\pi^\vee)=\rec_K(\pi)^\vee$.
\item If $\chi_1,\dots,\chi_n\in\Irr(\GL_1(K))$ are such that the normalised induction $\nind(\chi_1,\dots,\chi_n)$ is irreducible, then $$\rec_K(\nind(\chi_1,\dots,\chi_n))=\oplus_{i=1}^n\rec_K(\chi_i).$$
\end{itemize}
We will often just write $\rec$ for $\rec_K$ when the choice of $K$ is clear from the context. After choosing an isomorphism $\iota:\Qlbar\to\C$ one obtains bijections $\rec_l$ from the set of isomorphism classes of irreducible admissible representations of $\GL_n(K)$ over $\Qlbar$ to the set of isomorphism classes of $n$-dimensional Frobenius semi-simple Weil-Deligne $\Qlbar$-representations of $W_K$. We then define $r_l(\pi)$ to be the $l$-adic representation of $\Gal(\overline{K}/K)$ associated to $\rec_l(\pi^\vee\otimes|\cdot|^{(1-n)/2})$ whenever this exists (that is, whenever the eigenvalues of $\rec_l(\pi^\vee\otimes|\cdot|^{(1-n)/2})(\phi)$ are $l$-adic units, where $\phi$ is a Frobenius element). We will, of course, only use this notation where it makes sense. It is useful to note that $$r_l(\pi)^\vee(1-n)=r_l(\pi^\vee).$$

Let $M$ denote a CM field with maximal totally real subfield $F$ (by
``CM field'' we always mean ``imaginary CM field''). We denote the
nontrivial element of $\Gal(M/F)$ by $c$. Following \cite{cht} we
define a RACSDC (regular, algebraic, conjugate self dual, cuspidal) automorphic representation of $\GL_n(\A_M)$ to be a cuspidal automorphic representation $\pi$ such that
\begin{itemize}
\item $\pi^\vee\cong\pi^c$, and
\item $\pi_\infty$ has the same infinitesimal character as some irreducible algebraic representation of $\Res_{M/\Q}\GL_n$.
\end{itemize}
We say that $a\in(\Z^n)^{\Hom(M,\C)}$ is a weight if
\begin{itemize}
\item $a_{\tau,1}\geq\dots\geq a_{\tau,n}$ for all $\tau\in\Hom(M,\C)$, and
\item $a_{\tau c,i}=-a_{\tau,n+1-i}$.
\end{itemize}
For any weight $a$ we may form an irreducible algebraic representation
$W_a$ of $\GL_n^{\Hom(M,\C)}$, the tensor product over $\tau$ of the
irreducible algebraic representations of $\GL_n$ with highest weight
$a_\tau$. We say that $\pi$ has weight $a$ if it has the same
infinitesimal character as $W_a^\vee$; note that any RACSDC
automorphic representation has some weight.  Let $S$ be a non-empty
finite set of finite places of $M$. For each $v\in S$, choose an
irreducible square integrable representation $\rho_v$ of $\GL_n(M_v)$
(in this paper, we will in fact only need to consider the case where
each $\rho_v$ is the Steinberg representation). We say that an RACSDC
automorphic representation $\pi$ has type $\{\rho_v\}_{v\in S}$ if for
each $v\in S$, $\pi_v$ is an unramified twist of $\rho_v^\vee$. There
is a compatible family of Galois representations associated to such a
representation in the following fashion.

\begin{prop}
  Let $\iota:\Qlbar\isoto\C$. Suppose that $\pi$ is an RACSDC
  automorphic representation of $\GL_n(\A_M)$ of type $\{\rho_v\}_{v\in S}$ for some nonempty set of finite places $S$. Then there is a continuous semisimple representation $$r_{l,\iota}(\pi):\Gal(\overline{M}/M)\to\GL_n(\Qlbar)$$ such that
  \begin{enumerate}
  \item For each finite place $v\nmid l$ of $M$, we have $$r_{l,\iota}(\pi)|_{\Gal(\overline{M}_v/M_v)}^{ss}=r_l(\iota^{-1}\pi_v)^\vee(1-n)^{ss}.$$
\item $r_{l,\iota}(\pi)^c=r_{l,\iota}(\pi)^\vee\epsilon^{1-n}$.
  \end{enumerate}

\end{prop}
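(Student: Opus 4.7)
The plan is to appeal to the construction of Galois representations associated to RACSDC automorphic representations that is developed in \cite{cht}, building on \cite{ht}. The proposition is essentially a summary of results proved there, so rather than building $r_{l,\iota}(\pi)$ from scratch, I would outline how the hypotheses feed into that machinery and then verify the two listed properties.

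First, the existence of a nonempty $S$ together with the choice of square integrable $\rho_v$ for $v\in S$ supplies the essentially square integrable local component needed to transfer $\pi$ to the unitary group setting. Using the conjugate self-duality $\pi^\vee\cong\pi^c$ and Arthur--Clozel base change, together with a Jacquet--Langlands type transfer at the places in $S$, one descends $\pi$ to an automorphic representation $\pi'$ of a unitary group $U_n$ over $F$ that is compact at infinity and has suitable local behaviour at the places of $S$. The regularity and algebraicity of $\pi_\infty$ guarantee that $\pi'$ is cohomological for the local system on the associated compact Shimura variety determined by the weight $a$ of $\pi$.

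Next, I would extract $r_{l,\iota}(\pi)$ from the $\pi'$-isotypic part of the $l$-adic \'etale cohomology of this Shimura variety, after choosing $\iota:\Qlbar\isoto\C$ to identify coefficients. Property (1), local-global compatibility up to semisimplification at finite $v\nmid l$, then follows from the description of the cohomology via the Kottwitz-style point counting / trace formula computation carried out in \cite{ht}, which matches the action of a Frobenius at $v$ on the $\pi'$-part to the Satake/local Langlands parameter of $\pi_v$; the shift by $(1-n)$ and the dualisation in the statement are precisely the normalisations built into the definition of $r_l$ earlier in this section. Property (2), the conjugate self-duality with the cyclotomic twist $\epsilon^{1-n}$, is then a formal consequence of Poincar\'e duality on the Shimura variety combined with the hypothesis $\pi^\vee\cong\pi^c$, together with the fact that the middle-degree cohomology of the relevant local system carries the correct cyclotomic twist.

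The only genuine obstacle is the requirement that $S$ be nonempty: without a square integrable local component, one does not in this approach have access to a Shimura variety from which the Galois representation can be cut out. This is exactly the limitation that propagates into the Steinberg hypothesis in the paper's main theorem. Removing it requires the alternative constructions referenced in the note added in proof (\cite{shingaloisreps}, \cite{chenevierharris}, \cite{guerberoff2009modularity}), which are outside the scope of the present proposition.
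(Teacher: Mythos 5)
Your proposal is correct and matches the paper's approach: the paper's entire proof is a citation to Proposition 4.2.1 of \cite{cht}, and your outline is a faithful sketch of the strategy (transfer to a unitary group, extraction from \'etale cohomology of the associated Shimura variety, local-global compatibility via \cite{ht}, and Poincar\'e duality for the conjugate self-duality) that underlies that cited result. You have simply unpacked the black box rather than invoked it directly, which is entirely appropriate.
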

\begin{proof}
  This follows from Proposition 4.2.1 of \cite{cht} (which in fact also gives information on $r_{l,\iota}|_{\Gal(\overline{M}_v/M_v)}$ for places $v|l$).
\end{proof}
The representation $r_{l,\iota}(\pi)$ may be conjugated to be valued in the
ring of integers of a finite extension of $\Ql$, and we may reduce it
modulo the maximal ideal of this ring of integers and semisimplify to
obtain a well-defined continuous
representation $$\bar{r}_{l,\iota}(\pi):\Gal(\overline{M}/M)\to\GL_n(\Flbar).$$

Let $a\in(\Z^n)^{\Hom(M,\Qlbar)}$, and let
$\iota:\Qlbar\isoto\C$. Define $\iota_*a\in(\Z^n)^{\Hom(M,\C)}$ by
$(\iota_*a)_{\iota\tau,i}=a_{\tau,i}$. Now let $\rho_v$ be a discrete
series representation of $\GL_n(M_v)$ over $\Qlbar$ for each $v\in S$.
If $r:\Gal(\overline{M}/M)\to\GL_n(\Qlbar)$, we say that $r$ is
automorphic of weight $a$ and type $\{\rho_v\}_{v\in S}$ if $r\cong
r_{l,\iota}(\pi)$ for some RACSDC automorphic representation $\pi$ of
weight $\iota_*a$ and type $\{\iota\rho_v\}_{v\in S}$. Similarly, if
$\bar{r}:\Gal(\overline{M}/M)\to\GL_n(\Flbar)$, we say that $\bar{r}$  is
automorphic of weight $a$ and type $\{\rho_v\}_{v\in S}$ if $\bar{r}\cong
\bar{r}_{l,\iota}(\pi)$ for some RACSDC automorphic representation
$\pi$ with $\pi_l$ unramified, of weight $\iota_*a$ and type $\{\iota\rho_v\}_{v\in S}$.

We now consider automorphic representations of $\GL_n(\A_F)$. We say
that a cuspidal automorphic representation $\pi$ of $\GL_n(\A_F)$ is
RAESDC (regular, algebraic, essentially self dual, cuspidal) if
\begin{itemize}
\item $\pi^\vee\cong\chi\pi$ for some character $\chi:F^\times\backslash\A_F^\times\to\C^\times$ with $\chi_v(-1)$ independent of $v|\infty$, and
\item $\pi_\infty$ has the same infinitesimal character as some
  irreducible algebraic representation of $\Res_{F/\Q}\GL_n$.
\end{itemize}
We say that $a\in(\Z^n)^{\Hom(F,\C)}$ is a weight if
$$a_{\tau,1}\geq\dots\geq a_{\tau,n}$$ for all
$\tau\in\Hom(F,\C)$. For any weight $a$ we may form an irreducible
algebraic representation $W_a$ of $\GL_n^{\Hom(F,\C)}$, the tensor
product over $\tau$ of the irreducible algebraic representations of
$\GL_n$ with highest weight $a_\tau$. We say that an RAESDC
automorphic representation $\pi$ has weight $a$ if it has the same
infinitesimal character as $W_a^\vee$. In this case, by the
classification of algebraic characters over a totally real field, we
must have $a_{\tau,i}+a_{\tau,n+1-i}=w_a$ for some $w_a$ independent
of $\tau$. Let $S$ be a non-empty finite set of finite places of
$F$. For each $v\in S$, choose an irreducible square integrable
representation $\rho_v$ of $\GL_n(M_v)$. We say that an RAESDC
automorphic representation $\pi$ has type $\{\rho_v\}_{v\in S}$ if for
each $v\in S$, $\pi_v$ is an unramified twist of $\rho_v^\vee$. Again,
there is a compatible family of Galois representations associated to
such a representation in the following fashion.
\begin{prop}\label{existence of galois reps for raesdc}
  Let $\iota:\Qlbar\isoto\C$. Suppose that $\pi$ is an RAESDC
  automorphic representation of $\GL_n(\A_F)$, of type
  $\{\rho_v\}_{v\in S}$ for some nonempty set of finite places $S$,
  with $\pi^\vee\cong\chi\pi$. Then there is a continuous semisimple
  representation $$r_{l,\iota}(\pi):\Gal(\overline{F}/F)\to\GL_n(\Qlbar)$$
  such that
  \begin{enumerate}
  \item For each finite place $v\nmid l$ of $F$, we have $$r_{l,\iota}(\pi)|_{\Gal(\overline{F}_v/F_v)}^{ss}=r_l(\iota^{-1}\pi_v)^\vee(1-n)^{ss}.$$
\item $r_{l,\iota}(\pi)^\vee=r_{l,\iota}(\chi)\epsilon^{n-1}r_{l,\iota}(\pi)$.
  \end{enumerate}Here $r_{l,\iota}(\chi)$ is the $l$-adic Galois representation associated to $\chi$ via $\iota$ (see Lemma 4.1.3 of \cite{cht}).

\end{prop}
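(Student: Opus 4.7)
The plan is to reduce the statement to the conjugate self-dual CM setting already handled in the previous proposition. First, I would choose a quadratic imaginary extension $M/F$ so that $M$ is a CM field with maximal totally real subfield $F$, arranged so that every $v \in S$ splits in $M/F$. By the solvable cyclic base change results of Arthur--Clozel, one obtains a base change $\pi_M$ of $\pi$ to $\GL_n(\A_M)$; cuspidality of $\pi_M$ can be arranged by choosing $M$ linearly disjoint from the fields cut out by the self-twists of $\pi$. Because each $v \in S$ splits in $M$, every $(\pi_M)_w$ at a place $w \mid v$ remains an unramified twist of $\rho_v^\vee$, and $\pi_M$ inherits regularity and algebraicity from $\pi$; moreover $\pi_M^c \cong \pi_M$ since $\pi_M$ is a base change.

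Next, I would convert the essential self-duality $\pi^\vee \cong \chi\pi$ into a genuine conjugate self-duality after an auxiliary twist. By class field theory and the description of algebraic Hecke characters on a CM field, one can find an algebraic Hecke character $\theta : \A_M^\times / M^\times \to \C^\times$ satisfying $\theta \cdot \theta^c = \chi \circ N_{M/F}$, unramified at the places above $S$; the hypothesis that $\chi_v(-1)$ is independent of $v \mid \infty$ is exactly what is needed to arrange the infinity type of $\theta$ to be algebraic and compatible with the weight of $\pi_M$. Then $\pi_M' := \pi_M \otimes (\theta \circ \det)^{-1}$ satisfies $(\pi_M')^\vee \cong (\pi_M')^c$, is RACSDC of type $\{\rho_w\}_{w \mid v \in S}$, and the previous proposition furnishes a Galois representation $r_M : \Gal(\overline{M}/M) \to \GL_n(\Qlbar)$ enjoying the stated local-global compatibility and conjugate self-duality.

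To descend $r_M \otimes r_{l,\iota}(\theta)$ from $\Gal(\overline{M}/M)$ to $\Gal(\overline{F}/F)$, I would repeat the construction using a second CM quadratic extension $M'/F$ linearly disjoint from $M$ (and again with every $v \in S$ split), obtaining a representation $r_{M'}'$ of $\Gal(\overline{M'}/M')$. Chebotarev combined with local-global compatibility at the places split in both $M$ and $M'$ shows that $r_M'$ and $r_{M'}'$ have the same semisimplification on $\Gal(\overline{MM'}/MM')$. A patching argument, exploiting the conjugate self-duality to extend across the nontrivial element of $\Gal(M/F)$ and using that $\Gal(\overline{F}/F)$ is generated by $\Gal(\overline{M}/M)$ together with any lift of this element, then produces the desired representation $r_{l,\iota}(\pi)$. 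The duality $r_{l,\iota}(\pi)^\vee = r_{l,\iota}(\chi)\epsilon^{n-1} r_{l,\iota}(\pi)$ reads off directly from the conjugate self-duality of $r_M$ and the defining relation $\theta \theta^c = \chi \circ N_{M/F}$.

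The main obstacle is the bookkeeping surrounding the auxiliary Hecke character $\theta$: its infinity type must be compatible with the weight of $\pi_M$ so that $\pi_M'$ remains algebraic, and the identity $\theta \theta^c = \chi \circ N_{M/F}$ must hold both as adelic characters and compatibly at archimedean places. The descent step also requires some care to ensure the extension of $r_M'$ to all of $\Gal(\overline{F}/F)$ is well-defined independently of the choice of $M$. Both issues are worked out in detail in Section 4 of \cite{cht} in a closely related setting, so my proof would amount to a direct adaptation of those arguments, with the additional verification that the type $\{\rho_v\}_{v\in S}$ is preserved under the base change and twists chosen above.
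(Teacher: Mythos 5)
Your sketch is, in essence, a reconstruction of the proof of Proposition 4.3.1 of \cite{cht}, which is exactly the result the paper cites; the paper's own proof is nothing more than that citation (noting that \cite{cht} also gives information at places above $l$). The base-change / twisting / patching strategy you describe matches the argument there, modulo a harmless sign: with $\pi_M' = \pi_M \otimes \theta^{-1}$ the required relation is $\theta\theta^c = (\chi\circ N_{M/F})^{-1}$ rather than $\theta\theta^c = \chi\circ N_{M/F}$.
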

\begin{proof}
  This is Proposition 4.3.1 of \cite{cht} (which again obtains a
  stronger result, giving information on
  $r_{l,\iota}|_{\Gal(\overline{F}_v/F_v)}$ for places $v|l$).
\end{proof}
Again, the representation $r_{l,\iota}(\pi)$ may be conjugated to be valued in the
ring of integers of a finite extension of $\Ql$, and we may reduce it
modulo the maximal ideal of this ring of integers and semisimplify to
obtain a well-defined continuous
representation $$\bar{r}_{l,\iota}(\pi):\Gal(\overline{F}/F)\to\GL_n(\Flbar).$$

Let $a\in(\Z^n)^{\Hom(F,\Qlbar)}$, and let
$\iota:\Qlbar\isoto\C$. Define $\iota_*a\in(\Z^n)^{\Hom(F,\C)}$ by
$(\iota_*a)_{\iota\tau,i}=a_{\tau,i}$. Let $\rho_v$ be a discrete
series representation of $\GL_n(M_v)$ over $\Qlbar$ for each $v\in S$. If
$r:\Gal(\overline{F}/F)\to\GL_n(\Qlbar)$, we say that $r$ is
automorphic of weight $a$ and type $\{\rho_v\}_{v\in S}$ if $r\cong
r_{l,\iota}(\pi)$ for some RAESDC automorphic representation $\pi$ of
weight $\iota_*a$ and type $\{\iota\rho_v\}_{v\in S}$. Similarly, if
$\bar{r}:\Gal(\overline{F}/F)\to\GL_n(\Flbar)$, we say that $\bar{r}$
is automorphic of weight $a$ and type $\{\rho_v\}_{v\in S}$ if
$\bar{r}\cong \bar{r}_{l,\iota}(\pi)$ for some RAESDC automorphic
representation $\pi$ with $\pi_l$ unramified, of weight $\iota_*a$ and type
$\{\iota\rho_v\}_{v\in S}$.

As in \cite{hsbt} we denote the Steinberg representation of
$\GL_n(K)$, $K$ a nonarchimedean local field, by $\Sp_n(1)$.

\section{Modular forms}\label{modular forms}\subsection{}Let $f$ be a
cuspidal newform of level $\Gamma_{1}(N)$, nebentypus $\chi_f$, and
weight $k\geq 2$. Suppose that for each prime $p\nmid N$ we have
$T_{p}f=a_{p}f$. Then each $a_{p}$ is an algebraic integer, and the
set $\{a_{p}\}$ generates a number field $K_{f}$ with ring of integers
$\bigO_{f}$. We will view $K_{f}$ as a subfield of $\C$. It is known
that $K_{f}$ contains the image of $\chi_f$. For each place
$\lambda|l$ of $\bigO_{f}$ there is a continuous
representation $$\rho_{f,\lambda}:\Gal(\Qbar/\Q)\to\GL_{2}(K_{f,\lambda}) $$
which is determined up to isomorphism by the property that for all
$p\nmid Nl$, $\rho_{f,\lambda}|_{\Gal(\Qpbar/\Qp)}$ is unramified, and
the characteristic polynomial of $\rho_{f,\lambda}(\Frob_{p})$ is
$X^{2}-a_{p}X+p^{k-1}\chi_f(p)$ (where $\Frob_p$ is a choice of a
geometric Frobenius element at $p$).

 Assume from now on that $f$ is not of CM type.

\begin{defn}
	Let $\lambda$ be a prime of $\bigO_{f}$ lying over a rational prime $l$. Then we say that $f$ is ordinary at $\lambda$ if $\lambda\nmid a_{l}$. We say that $f$ is ordinary at $l$ if it is ordinary at $\lambda$ for some $\lambda|l$.
\end{defn}

\begin{lemma}\label{lem:weight two or three is ordinary}If $k=2$ or $3$, then the set of primes $l$ such that $f$ is ordinary at $l$ has density one.
	
\end{lemma}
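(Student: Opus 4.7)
The plan is to combine Deligne's Ramanujan bound $|\sigma(a_l)| \le 2 l^{(k-1)/2}$, valid for every complex embedding $\sigma : K_f \hookrightarrow \C$, with Serre's theorem from \cite{MR644559} that for a non-CM cuspidal newform of weight $\ge 2$ the set $\{l : a_l = 0\}$ has natural density zero. Only finitely many primes ramify in $\bigO_f$, so I restrict to $l$ unramified in $\bigO_f$; for such $l$ the Chinese remainder theorem gives that $a_l$ lies in every prime of $\bigO_f$ above $l$ if and only if $a_l \in l\bigO_f$. Choose a $\Z$-basis $\omega_1, \dots, \omega_d$ of $\bigO_f$ and write $a_l = \sum_i x_i(l) \omega_i$; inverting the matrix $(\sigma_j \omega_i)_{i,j}$ converts the Ramanujan bounds into uniform coordinate estimates $|x_i(l)| \le C \, l^{(k-1)/2}$, with $C$ depending only on $f$.

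When $k = 2$, for $l$ sufficiently large we have $|x_i(l)| < l$, so $a_l \in l\bigO_f$ forces $x_i(l) = 0$ for all $i$, whence $a_l = 0$; the result then follows from \cite{MR644559}. When $k = 3$, the estimate only reads $|x_i(l)| \le C l$, so $a_l \in l\bigO_f$ instead forces $a_l = l \cdot b_l$ for some $b_l \in \bigO_f$ with $|\sigma(b_l)| \le 2$ at every embedding $\sigma$. The set $S := \{ b \in \bigO_f : |\sigma(b)| \le 2 \text{ for all } \sigma \}$, being a discrete lattice intersected with a compact subset of $K_f \otimes_\Q \R$, is finite. It therefore suffices to show that $\{l : a_l = l b\}$ has density zero for each $b \in S$.

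For $b = 0$ this is again \cite{MR644559}. For $b \ne 0$ I argue by reduction modulo an auxiliary prime. By the Ribet--Serre open image theorem, which applies because $f$ is non-CM, for all but finitely many auxiliary primes $l_0$ and $\lambda_0 \mid l_0$ the image of $\bar\rho_{f,\lambda_0}$ contains $\SL_2(k_{\lambda_0})$. The identity $a_l = l b$ implies the congruence $\tr \bar\rho_{f,\lambda_0}(\Frob_l) \equiv l b \pmod{\lambda_0}$ for every such $\lambda_0$. Applying Chebotarev to the joint representation $(\bar\rho_{f,\lambda_0}, \epsilon_{\lambda_0}) : G_\Q \to \GL_2(k_{\lambda_0}) \times k_{\lambda_0}^\times$, the density of primes $l$ satisfying this congruence equals the proportion of pairs $(g, c)$ in the image with $\tr g = c b$; a standard count inside $\SL_2(k_{\lambda_0})$ (for any $t \in k_{\lambda_0}$ the trace-$t$ locus has size $O(|k_{\lambda_0}|^2)$ out of $|k_{\lambda_0}|^3$) shows this proportion is $O(1/|k_{\lambda_0}|)$. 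Letting $l_0 \to \infty$ forces $\{ l : a_l = l b \}$ to have density zero.

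The main obstacle is the case $k = 3$ with $b \ne 0$: the Ramanujan bound in weight $3$ is too weak to force $a_l$ to vanish, so one cannot finish by a direct appeal to \cite{MR644559}. The Chebotarev argument just sketched, powered by the Ribet--Serre open image theorem to ensure a large residual image for all sufficiently large $l_0$, is exactly what is needed to rule out the finitely many remaining near-coincidences $a_l = l b$ with $b \in S \setminus \{0\}$.
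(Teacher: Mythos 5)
Your proof follows the same overall strategy as the paper: restrict to primes unramified in $\bigO_f$ (and not dividing $N$), observe that non-ordinarity at such an $l$ forces $a_l = l b_l$ with all archimedean absolute values of $b_l$ bounded (hence $b_l$ in a finite set), and then show that $\{l : a_l = lb\}$ has density zero for each candidate $b$. Where you diverge is in the last step. The paper simply invokes Corollaire 1 to Th\'eor\`eme 15 of \cite{MR644559}, which directly yields the quantitative bound $\#\{p\le x : a_p = p b\} = O(x/(\log x)^{5/4-\delta})$ for every fixed $b$, including $b=0$, so there is no need to separate cases. You instead cite Serre only for $b=0$, and for $b\neq 0$ re-derive the qualitative density-zero statement by hand via Chebotarev applied to $(\bar\rho_{f,\lambda_0},\epsilon_{l_0})$ and Ribet's large-image theorem. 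Your argument is correct (the key mechanism, that $\SL_2$ of a growing residue field is perfect and hence lies in $\ker\epsilon_{l_0}$, makes the fiberwise trace count $O(|k|^2)$ out of $|k|^3$ go through on each $\epsilon$-coset) and is essentially a self-contained re-proof of the special case of Serre's corollary that is needed; the paper's citation is just the packaged form of it, and buys the stronger $O(x/(\log x)^{5/4-\delta})$ error term, which is not needed here.

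Two small points of precision in your write-up. First, the mod-$l_0$ cyclotomic character takes values in $\F_{l_0}^\times$, not $k_{\lambda_0}^\times$; this is harmless but should be stated correctly. Second, the image of the joint representation $(\bar\rho_{f,\lambda_0},\epsilon_{l_0})$ is a proper subgroup of $\GL_2(k_{\lambda_0})\times\F_{l_0}^\times$ (constrained by $\det\bar\rho_{f,\lambda_0} = \bar\chi_f\,\epsilon^{k-1}$), so the ``standard count inside $\SL_2(k_{\lambda_0})$'' must really be done on each $\epsilon$-fiber, which is a coset of a group containing $\SL_2(k)$; as phrased your count reads as though it is taking place in a direct product. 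This is exactly the reasoning you need, but spelling it out would make the step airtight.
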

\begin{proof}The proof is based on an argument of Wiles (see the final lemma of \cite{wil90}). Let $S$ be the finite set of primes which either divide $N$ or which are ramified in $\bigO_{f}$. Suppose that $f$ is not ordinary at $p\notin S$. By definition we have that $\lambda|a_{p}$ for each prime $\lambda$ of $\bigO_{f}$ lying over $p$. Since $p$ is unramified in $\bigO_{f}$, $(p)=\prod_{\lambda|p}\lambda$, so $p|a_{p}$. Write $a_{p}=pb_{p} $ with $b_{p}\in\bigO_{f}$.
	
Since $p\nmid N$, the Weil bounds (that is, the Ramanujan-Petersson conjecture) tell us that for each embedding $\iota:K_{f}\into\C$ we have $|\iota(a_{p})|\leq 2p^{(k-1)/2}$. Since $k\leq 3$, this implies that $|\iota(b_{p})|\leq 2$ for all $\iota$. Let $T$ be the set of $y\in\bigO_{f}$ such that $|\iota(y)|\leq 2$ for all $\iota$. This is a finite set, because one can bound the absolute values of the coefficients of the characteristic polynomial of such a $y$.

From the above analysis, it is sufficient to prove that for each $y\in T$, the set of primes $p$ for which $a_{p}=py$ has density zero. However, by Corollaire 1 to Th\'{e}or\`{e}me 15 of \cite{MR644559}, the number of primes $p\leq x$ for which $a_{p}=py$ is $O(x/(\log x)^{5/4-\delta})$ for any $\delta>0$, which immediately shows that the density of such primes is zero, as required.\end{proof}
The following result is well known, and follows from, for example, \cite{MR1047142} and Theorem 2 of \cite{wil90}.
\begin{lemma}\label{lem:existence of ordinary Galois representation} If $f$ is ordinary at a place $\lambda|l$ of $\bigO_{f}$, and $l\nmid N$, then the Galois representation $\rho_{f,\lambda}$ is crystalline, and furthermore it is ordinary; that is, $$\rho_{f,\lambda}|_{\Gal(\Qlbar/\Ql)}\cong \begin{pmatrix} \psi_{1} & *\\ 0& \psi_{2}\epsilon^{1-k} \end{pmatrix} $$where $\psi_{1}$ and $\psi_{2}$ are unramified characters of finite order. In addition, $\psi_{1}$ takes $\Frob_{l}$ to the unit root of $X^{2}-a_{l}X+\chi_f(l)l^{k-1}$.
\end{lemma}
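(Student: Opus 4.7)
The plan is to treat the two assertions—crystallinity at $l$ and the ordinary upper-triangular form—separately, in each case by citing the standard reference already indicated in the statement.

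For crystallinity at $l$, I would realise $\rho_{f,\lambda}$ geometrically. For $k=2$, it appears as a Hecke-eigenspace subquotient of the \'etale cohomology $H^1(X_1(N)_{\Qbar},\Ql)$; for $k\geq 3$, one uses the standard Kuga-Sato construction, a suitable smooth desingularisation of the $(k-2)$-fold self-fibre product of the universal generalised elliptic curve over the compactified modular curve of level $N$, and $\rho_{f,\lambda}$ is then a subquotient of $H^{k-1}$ of this variety with $\Ql$-coefficients. Since $l\nmid N$, this smooth proper $\Q$-variety has good reduction at $l$, so by Faltings' crystalline comparison theorem (\cite{MR1047142}) its $l$-adic \'etale cohomology is crystalline as a representation of $\Gal(\Qlbar/\Ql)$; crystallinity is inherited by Hecke-isotypic subquotients, yielding the first assertion.

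For the ordinary decomposition, once crystallinity is in hand, the entire remaining conclusion—including the identification of $\psi_1(\Frob_l)$ as the unit root of $X^2-a_lX+\chi_f(l)l^{k-1}$—is precisely the content of Theorem 2 of Wiles \cite{wil90}, whose hypotheses match ours ($l\nmid N$ and $f$ ordinary at $\lambda$, i.e.\ $\lambda\nmid a_l$). As a sanity check on the unit-root statement, note that after Frobenius-semisimplification the two diagonal Frobenius eigenvalues on $\rho_{f,\lambda}|_{\Gal(\Qlbar/\Ql)}$ multiply to $\det\rho_{f,\lambda}(\Frob_l)=\chi_f(l)l^{k-1}$ (since $\det\rho_{f,\lambda}=\chi_f\epsilon^{k-1}$) and sum to $a_l$ (by local-global compatibility at $l$, or equivalently via the identification of the $U_l$-eigenvalue in Hida theory), so they are precisely the two roots of the Hecke polynomial; the one equal to $\psi_1(\Frob_l)$ is a root of unity and hence a $\lambda$-adic unit, while the other has strictly positive $\lambda$-adic valuation, forcing $\psi_1(\Frob_l)$ to be the unique unit root.

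The substantive step—and the one place where the argument is not immediate—is the extraction of information about Frobenius at $l$ itself, since the compatible-family description recalled in Section \ref{modular forms} only pins down Frobenius characteristic polynomials at primes $p\nmid Nl$. That gap is exactly what Theorem 2 of \cite{wil90} fills; everything else is either a standard geometric fact (crystallinity from good reduction) or a routine manipulation of determinants, sums, and $\lambda$-adic valuations.
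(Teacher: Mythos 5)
Your proposal takes the same route as the paper, whose proof simply cites \cite{MR1047142} (for crystallinity via the Kuga--Sato realisation and good reduction at $l$) and Theorem 2 of \cite{wil90} (for the ordinary local form together with the identification of $\psi_1(\Frob_l)$ with the unit root). The only minor wrinkle is that your closing ``sanity check'' uses the finite-order claim to conclude that $\psi_1(\Frob_l)$ is a unit, which is circular as an independent verification (and in fact the unit root of the Hecke polynomial is typically \emph{not} a root of unity, so the finite-order assertion in the lemma's statement is itself overstated); but as you correctly note, the substantive content is carried entirely by the citation to Wiles.
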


\subsection{}Let $\rhobar_{f,\lambda}$ denote the semisimplification of the reduction mod $\lambda$ of $\rho_{f,\lambda}$; this makes sense because $\rho_{f,\lambda}$ may be conjugated to take values in $\GL_2(\bigO_{f,\lambda})$, and it is independent of the choice of lattice. It is valued in $\GL_2(k_{f,\lambda})$, where $k_{f,\lambda}$ is the residue field of $K_{f,\lambda}$. \begin{defn} We say that $\rhobar_{f,\lambda}$ has \emph{large image} if $$\SL_{2}(k)\subset\rhobar_{f,\lambda}(\Gal(\Qbar/\Q))\subset k_{f,\lambda}^{\times}\GL_{2}(k)  $$for some subfield $k$ of $k_{f,\lambda}$.\end{defn}
We will need to know that the residual Galois representations $\rhobar_{f,\lambda}$ frequently have large image. The following result is essentially due to Ribet (see \cite{MR0419358}, which treats the case $N=1$; for a concrete reference, which also proves the corresponding result for Hilbert modular forms, see \cite{MR2172950}).

\begin{lemma}\label{lem:modular form large image}For all but finitely many primes $\lambda$ of $\bigO_{f}$,  $\rhobar_{f,\lambda}$ has large image.
	
\end{lemma}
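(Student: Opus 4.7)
The plan is to follow Ribet's classical approach via Dickson's classification of the finite subgroups of $\PGL_2(\Flbar)$. Writing $\overline{G}_\lambda\subset\PGL_2(k_{f,\lambda})$ for the projective image of $\rhobar_{f,\lambda}$, Dickson's theorem says that one of the following holds: (i) $\overline{G}_\lambda$ is contained in a Borel subgroup; (ii) $\overline{G}_\lambda$ is dihedral, i.e.\ contained in the normaliser of a (split or nonsplit) Cartan but not in the Cartan itself; (iii) $\overline{G}_\lambda$ is isomorphic to one of $A_4$, $S_4$, $A_5$; or (iv) $\overline{G}_\lambda$ contains $\PSL_2(k)$ for some subfield $k\subseteq k_{f,\lambda}$. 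Only (iv) is compatible with large image (the upper bound being automatic from Dickson, since such an $\overline{G}_\lambda$ is either $\PSL_2(k)$ or $\PGL_2(k)$; the lower bound $\SL_2(k)\subset\rhobar_{f,\lambda}(\Gal(\Qbar/\Q))$ follows by looking at commutators, for $l$ odd). So it suffices to rule out (i)--(iii) for all but finitely many $\lambda$.

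Case (iii) is handled using inertia at $l$. For any $\lambda$ whose residue characteristic $l$ is ordinary for $f$ (a density one set by Lemma~\ref{lem:weight two or three is ordinary}), prime to $N$, and large, Lemma~\ref{lem:existence of ordinary Galois representation} reduced mod $\lambda$ shows that $\rhobar_{f,\lambda}|_{I_l}$ contains a tame inertia element whose eigenvalue ratio is $\overline{\epsilon}(\sigma)^{k-1}$ for a generator $\sigma$ of tame inertia, an element of $k_{f,\lambda}^\times$ of order $(l-1)/\gcd(l-1,k-1)$. For $l$ large enough this exceeds $60$, so the projective image cannot be exceptional. Case (i) is treated similarly: if $\rhobar_{f,\lambda}^{\mathrm{ss}}\cong\alpha_\lambda\oplus\beta_\lambda$, then both characters are unramified outside $Nl$, their product equals $\chi_f\overline\epsilon^{k-1}$, and their restrictions to $I_l$ are pinned down (up to a swap) by the ordinary description; hence the unordered pair $\{\alpha_\lambda,\beta_\lambda\}$ ranges over a finite set as $\lambda$ varies. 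Pigeonhole produces a fixed pair $\{\alpha,\beta\}$ working for infinitely many $\lambda$, and Chebotarev then forces $a_p=\alpha(\Frob_p)+\beta(\Frob_p)$ exactly, contradicting the cuspidality of $f$.

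The main obstacle is case (ii), where the non-CM hypothesis enters essentially. For each such $\lambda$ there is a quadratic character $\chi_\lambda:\Gal(\Qbar/\Q)\to\{\pm1\}$ with $\rhobar_{f,\lambda}\otimes\chi_\lambda\cong\rhobar_{f,\lambda}$. One must show that the conductor of $\chi_\lambda$ is bounded independently of $\lambda$: away from $l$ it divides $N$ (the conductor of $\rhobar_{f,\lambda}$ outside $l$), while at $l$ one argues using the ordinary description of Lemma~\ref{lem:existence of ordinary Galois representation} (or Fontaine--Laffaille in the non-ordinary case) that $\chi_\lambda$ is at worst tamely ramified at $l$, and in fact unramified in most cases. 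Hence $\chi_\lambda$ runs over a finite set as $\lambda$ varies, some fixed $\chi$ arises for infinitely many $\lambda$, and comparing traces via Chebotarev gives $a_p=\chi(p)a_p$ for every $p\nmid Nl$. Thus $a_p=0$ whenever $\chi(p)=-1$, which means $f$ has CM by the quadratic field cut out by $\chi$, contradicting our standing assumption that $f$ is not of CM type.
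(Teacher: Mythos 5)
The paper offers no proof of this lemma; it simply cites Ribet and Dimitrov. Your proposal is an attempt at the actual argument, and the skeleton --- run through Dickson's classification of finite subgroups of $\PGL_2(\Flbar)$, exclude the Borel, dihedral and exceptional possibilities for almost all $\lambda$, and observe that the remaining case is exactly ``large image'' --- is precisely the strategy of those references, so there is no divergence of approach. The treatments of cases (i) and (ii) are, modulo standard bookkeeping (e.g.\ making precise that your ``finite set of unordered pairs'' is really a finite set of Dirichlet characters of conductor dividing $N$, once $l$ is large enough that orders are prime to $l$), essentially correct, and you invoke the non-CM hypothesis in the right place.

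There is, however, a genuine gap in case (iii), which then propagates back to case (i). The lemma asserts large image for all but finitely many $\lambda$ and for a newform of any weight $k\geq 2$. Your argument in case (iii) only treats $\lambda$ lying over primes $l$ at which $f$ is ordinary, and the justification it leans on --- Lemma~\ref{lem:weight two or three is ordinary} --- both (a) assumes $k=2$ or $3$ and (b) yields only a density one set of ordinary $l$, not a cofinite one. Density one is strictly weaker than ``all but finitely many'': there may well be infinitely many non-ordinary $l$, and your argument says nothing about $\lambda$ over those. The fix is the standard one you already gesture at in case (ii) but omit here: for $l\nmid N$ and $l$ large relative to $k$, Fontaine--Laffaille theory (equivalently, Edixhoven's description of $\rhobar_{f,\lambda}|_{I_l}$) determines the restriction to inertia at $l$ whether or not $f$ is ordinary at $\lambda$. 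In the non-ordinary case one has $\rhobar_{f,\lambda}|_{I_l}\cong\omega_2^{k-1}\oplus\omega_2^{l(k-1)}$ with $\omega_2$ a fundamental character of level two, and the projective image of a generator of tame inertia still has order growing linearly in $l$; in particular it eventually exceeds $5$, the largest element order in $A_4$, $S_4$, $A_5$. This removes both the appeal to ordinarity and the restriction to $k\leq 3$. The same repair is needed in case (i): there you again invoke ``the ordinary description'' to pin down $\alpha_\lambda|_{I_l}$ and $\beta_\lambda|_{I_l}$, which only covers ordinary $\lambda$. For large non-ordinary $l$, though, the point is even easier --- the restriction $\rhobar_{f,\lambda}|_{I_l}$ is already irreducible (since $\omega_2^{k-1}$ does not extend to $\Gal(\Qbar_l/\Ql)$ for $1<k\leq l$), so the Borel case cannot arise at all and there is nothing to prove there.
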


\subsection{}We let $\pi(f)$ be the automorphic representation of
$\GL_{2}(\A_{\Q})$ corresponding to $f$, normalised so that  $\pi(f)$
is RAESDC of weight $(k-2,0)$ (it is essentially self dual
because $$\pi(f)^{\vee}\cong\chi\pi(f) $$where
$\chi=|\cdot|^{k-2}\chi_f^{-1}$). Let
$\lambda|l$ be a place of $\bigO_{f}$, and choose an isomorphism
$\iota:\Qbar_{l}\isoto\C$ and a compatible embedding
$K_{f,\lambda}\into\Qbar_{l}$; that is, an embedding such that the
diagram $$\xymatrix{K_f\ar[r]\ar[d] & \C\\ K_{f,\lambda}\ar[r] &
  \Qlbar\ar[u]_\iota}$$commutes.  Assume that $\pi_{f,v}$ is square
integrable for some finite place $v$. Then by Proposition \ref{existence of galois reps for raesdc} there is a Galois representation $$r_{l,\iota}(\pi(f)):\Gal(\Qbar/\Q)\to\GL_{2}(\Qbar_l) $$ associated to $\pi_{f}$, and it follows from the definitions that $$r_{l,\iota}(\pi(f))\cong \rho_{f,\lambda}\otimes_{K_{f,\lambda}}\Qbar_{l}. $$

\begin{defn}
  We say that $f$ is Steinberg at a prime $q$ if $\pi(f)_q$ is an unramified twist of the Steinberg representation.
\end{defn}

\begin{defn} We say that $f$ is potentially Steinberg at a prime $q$
  if $\pi(f)_q$ is a (possibly ramified) twist of the Steinberg representation.\end{defn}

Note that if $f$ is (potentially) Steinberg at $q$ for some $q$ then it is not CM. Note also that if $f$ is potentially Steinberg at $q$ then there is a Dirichlet character $\theta$ such that $f\otimes\theta$ is Steinberg at $q$.

\section{Potential automorphy in weight 0}\label{in weight
  0}\subsection{}Let $l$ be an odd prime, and let $f$ be a modular
form of weight $2\leq k<l$ and level $N$, $l\nmid N$. Assume that $f$ is
Steinberg at $q$. Suppose that $\lambda|l$ is a place of $\bigO_{f}$
such that $f$ is ordinary at $\lambda$. Assume that
$\rhobar_{f,\lambda}$ is absolutely irreducible. By Lemma
\ref{lem:existence of ordinary Galois
  representation} we have $$\rhobar_{f,\lambda}|_{\Gal(\Qlbar/\Ql)}\cong \begin{pmatrix}
  \overline{\psi}_{1} & *\\ 0&
  \overline{\psi}_{2}\epsilon^{1-k} \end{pmatrix} $$where
$\overline{\psi}_{1}$ and $\overline{\psi}_{2}$ are unramified
characters. We wish to prove that the symmetric powers of
$\rhobar_{f,\lambda}$ are potentially automorphic of some weight. To
do so, we use a potential modularity argument to realise
$\rhobar_{f,\lambda}$ geometrically, and then appeal to the results of
\cite{hsbt}.

The potential modularity result that we need is almost proved in \cite{tay02}; the one missing ingredient is that we wish to preserve the condition of being Steinberg at $q$. This is, however, easily arranged, and rather than repeating all of the arguments of \cite{tay02}, we simply indicate the modifications required.

We begin by recalling some definitions from \cite{tay02}. Let $N$ be a totally real field. Then an $N$-HBAV over a field $K$ is a triple $(A,i,j)$ where \begin{itemize}
	\item $A/K$ is an abelian variety of dimension $[N:\Q]$,
	\item $i:\bigO_{N}\into\End(A/K)$, and
	\item $j:\bigO_{N}^{+}\isoto\mathcal{P}(A,i)$ is an isomorphism of ordered invertible $\bigO_{N}$-modules.
\end{itemize}For the definitions of ordered invertible $\bigO_{N}$-modules and of $\bigO_{N}^{+}$\ and $\mathcal{P}(A,i)$, see page 133 of \cite{tay02}. 

Choose a totally real quadratic field $F$ in which $l$ is inert and
$q$ is unramified and which is linearly disjoint from
$\Qbar^{\ker(\rhobar_{f,\lambda})}$ over $\Q$, a finite extension
$k/k_{f,\lambda}$ and a character
$\overline{\theta}:\Gal(\overline{F}/F)\to k^{\times}$ which is
unramified at $q$ such
that $$\det\rhobar_{f,\lambda}|_{\Gal(\overline{F}/F)}=\epsilon^{-1}\overline{\theta}^{-2}$$and
$(\rhobar_{f,\lambda}|_{\Gal(\overline{F}/F)}\otimes\overline{\theta})(\Frob_w)$
has eigenvalues $1$, $\#k(w)$, where $w|q$ is a place of $F$. This is
possible as the obstruction to taking a square root of a character is
in the 2-part of the Brauer group, and because any class in the Brauer
group of a local field splits over an unramified extension. Let
$\rhobar=\rhobar_{f,\lambda}|_{\Gal(\overline{F}/F)}\otimes\overline{\theta}:\Gal(\overline{F}/F)\to\GL_2(k)$,
so that $\det\rhobar=\epsilon^{-1}$. If $x$ is the place of $F$ lying
over $l$, then we may write (for some character $\overline{\chi}_x$ of
$\Gal(\overline{F_x}/F_x)$) $$\rhobar|_{\Gal(\overline{F_x}/F_x)}\cong \begin{pmatrix}
  \overline{\chi}_x^{-1} & *\\ 0&
  \overline{\chi}_x\epsilon^{-1} \end{pmatrix} $$with
$\overline{\chi}_x^{2}|_{I_{x}}=\epsilon^{2-k}$.

\begin{thm}\label{thm:potential modularity: HBAV for rhobar}There is a
  finite totally real Galois extension $E/F$ which is linearly
  disjoint from $\Qbar^{\ker(\rhobar_{f,\lambda})}$ over $\Q$ and in
  which the unique prime of $F$ dividing $l$ splits completely, a totally real field $N$, a place
  $\lambda'|l$ of $N$, a place $v_{q}|q$ of $E$, and an $N$-HBAV
  $(A,i,j)/E$ with potentially good reduction at all places dividing
  $l$ such that \begin{itemize}
	\item the representation of $\Gal(\bar{E}/E)$ on $A[\lambda']$ is equivalent to $(\rhobar|_{\Gal(\bar{E}/E)})^\vee$,
	\item at each place $x|l$ of $E$, the action of $\Gal(\overline{E_x}/E_x)$ on $T_{\lambda'}A\otimes\Ql$ is of the form $$ \begin{pmatrix} {\chi}_x^{-1}\epsilon & *\\ 0& {\chi}_x \end{pmatrix} $$with $\chi_x$ a tamely ramified lift of $\overline{\chi}_x$, and
	\item $A$ has multiplicative reduction at $v_{q}$.
\end{itemize}
	
\end{thm}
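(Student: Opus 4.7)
The plan is to follow the argument of \cite{tay02} essentially verbatim, making the single additional modification of imposing a local moduli condition at $v_q$ that forces multiplicative reduction with the right torsion structure. So I would organize the proof around three ingredients: the existing construction, a new local condition at $q$, and an application of Moret-Bailly's theorem combining all the local conditions simultaneously.

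First, I would recall Taylor's moduli problem: one considers a Hilbert modular variety parametrising $N$-HBAVs with full $\lambda'$-level structure prescribed so that the resulting Galois representation on $A[\lambda']$ is $\rhobar^\vee$, together with local moduli conditions at each place of $F$ above $l$ cutting out the tamely ramified ordinary lift $\chi_x$ of the prescribed shape. In \cite{tay02} the field $N$, the place $\lambda'|l$, and these local moduli problems are chosen so that (i) the big moduli space is geometrically irreducible and smooth over $F$, and (ii) at each place above $l$ the local moduli space has a smooth $F_v$-point. All of this is unchanged in our setting, since $\rhobar$ still has $\det\rhobar = \epsilon^{-1}$ and is ordinary at the prime of $F$ above $l$.

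Second, I would introduce the additional local condition at the chosen place $w|q$: namely, we impose that $A$ has multiplicative reduction at $w$, with the specific $\lambda'$-torsion structure demanded by $\rhobar^\vee|_{G_{F_w}}$. The key point is that the Tate HBAV over $F_w$ with parameter in $\bigO_N\otimes F_w^\times$ of appropriate valuation provides exactly such a local moduli point: its $\lambda'$-adic Galois representation is an extension of the trivial character by the cyclotomic character, whose semisimplification at Frobenius has eigenvalues $1$ and $\#k(w)$. By the way we chose $\overline\theta$ (so that $(\rhobar_{f,\lambda}|_{G_F}\otimes\overline\theta)(\Frob_w)$ has eigenvalues $1$ and $\#k(w)$), this matches the prescribed restriction of $\rhobar^\vee|_{G_{F_w}}$ up to semisimplification, and since $q$ is unramified in $F$ and $\lambda'\neq q$, one can adjust the Tate parameter and level structure to realise the precise extension class. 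This gives a smooth local point on the moduli problem at $w$.

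Third, I would apply Moret-Bailly's theorem (in the form used in \cite{tay02}, cf.~Prop.~2.1 there) to the enhanced moduli space, using the local points at the places above $l$, at $w$, and at the archimedean places of $F$. The hypothesis of linear disjointness from $\Qbar^{\ker\rhobar_{f,\lambda}}$ is arranged by enlarging the set of places at which we impose local conditions (following Taylor's approach), and the requirement that $l$ splits completely in $E/F$ is also a local condition one can impose since $F_x$ can be taken as an $F$-local point. The output is a totally real Galois extension $E/F$ and an $E$-rational point on the moduli space, i.e., an $N$-HBAV $(A,i,j)/E$ with the three required properties.

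The main obstacle is the verification of the new local condition at $w$: one must check that the extended moduli problem remains smooth at a suitable $F_w$-point, with the local Galois representation on $A[\lambda']$ matching $\rhobar^\vee|_{G_{F_w}}$ as a representation (not merely up to semisimplification). This is where the careful choice of $\overline\theta$ — forcing the Frobenius eigenvalues to be exactly $1$ and $\#k(w)$ — is essential, and it is also where one uses that $\lambda'\nmid q$ so that the $\lambda'$-adic Tate module of a Tate HBAV has the expected shape. Once this local compatibility is set up, the global argument is a direct transcription of \cite{tay02}.
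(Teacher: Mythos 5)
Your proposal follows essentially the same route as the paper: start from Taylor's moduli space $X$ of $N$-HBAVs with prescribed $\lambda'$-level structure and ordinary local behaviour at $l$, add one further local condition at a place above $q$ forcing multiplicative reduction, and then invoke a Moret-Bailly-type theorem to produce a totally real Galois $E/F$, linearly disjoint from $\Qbar^{\ker\rhobar_{f,\lambda}}$, in which $l$ splits, carrying an $E$-point of $X$. Your explicit construction of the local point above $q$ via a Tate $N$-HBAV with adjusted parameter, using that $\rhobar(\Frob_v)$ has eigenvalues $1$ and $\#k(v)$ thanks to the Steinberg hypothesis and the choice of $\overline\theta$, is a spelled-out version of the paper's assertion that the set $\Omega_v\subset X(F_v)$ of points with multiplicative reduction is open and nonempty. (The paper gets linear disjointness directly by citing Proposition 2.1 of \cite{hsbt} rather than by enlarging the set of auxiliary local conditions, but these are equivalent devices.)

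There is one genuine omission. You assert that the local conditions at places $x|l$ are ``unchanged in our setting'' and defer to \cite{tay02} for them, but this is not quite right when $k=2$. The theorem demands that $A$ have \emph{potentially good} reduction at every place dividing $l$, and when $k=2$ the constraint $\overline\chi_x^2|_{I_x}=\epsilon^{2-k}=1$ allows $\overline\chi_x^2=1$, in which case the shape of $\rhobar|_{G_{F_x}}$ is also compatible with a multiplicative-reduction Tate point. The construction in \cite{tay02} does not by itself rule out the moduli space landing on such a point, so one needs a further argument: the paper observes that $\rhobar|_{G_{F_x}}$ is finite flat (since it arises from a weight~$2$ form of level prime to $l$), and by the argument of the proof of Theorem~2.1 of \cite{kw} one can then arrange good reduction. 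You should add this case to your plan; for $k>2$ the ordinarity forces $\overline\chi_x$ to be genuinely ramified and the issue does not arise.

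A smaller point: on the way to the local point at $q$, you say the Tate HBAV's $\lambda'$-torsion matches $\rhobar^\vee|_{G_{F_w}}$ ``up to semisimplification'' and then ``adjust the Tate parameter to realise the precise extension class.'' That is the right idea, but note that one is really exhibiting a point of $X(F_w)$, and the level structure is part of the moduli datum; what needs checking is that the extension class cut out by $\rhobar^\vee|_{G_{F_w}}$ lies in the image of the Kummer map on Tate parameters, which is automatic here because every class in $H^1(G_{F_w},\mu_{\lambda'})$ arises this way (Kummer theory), $q\neq l$, and the unramified quotient line is determined by the eigenvalue $1$.
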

\begin{proof}As remarked above, this is essentially proved in \cite{tay02}. Indeed, if $k>2$ then with the exception of the fact that $E$ can be chosen to be linearly disjoint from $\Qbar^{\ker(\rhobar_{f,\lambda})}$ over $\Q$, and the claim that $A$ can be chosen to have multiplicative reduction at some place over $q$, the result is obtained on page 136 of \cite{tay02} (the existence of $A$ with $A[\lambda']$ equivalent to $(\rhobar|_{\Gal(\bar{E}/E)})^\vee$ is established in the second paragraph on that page, and the form of the action of $\Gal(\overline{E_x}/E_x)$ for $x|l$ follows from Lemma 1.5 of \emph{loc. cit.} ).
	
  We now indicate the modifications needed to the arguments of
  \cite{tay02} to obtain the slight strengthening that we
  require. Suppose firstly that $k>2$. Rather than employing the
  theorem of Moret-Bailly stated as Theorem G of \cite{tay02}, we use
  the variant given in Proposition 2.1 of \cite{hsbt}. This
  immediately allows us to assume that $E$ is linearly disjoint from
  $\Qbar^{\ker(\rhobar_{f,\lambda})}$ over $\Q$, so we only need to
  ensure that $A$ has multiplicative reduction at some place dividing
  $q$. Let $X$ be the moduli space defined in the first paragraph of
  page 136 of \cite{tay02}. Let $v$ be a place of $F$ lying over
  $q$. It is enough to check that there is a non-empty open subset
  $\Omega_{v}$ of $X(F_{v})$ such that for each point of $\Omega_{v}$,
  the corresponding $N$-HBAV has multiplicative reduction. Let
  $\Omega_{v}$ denote the set of \emph{all} points of $X({F}_{v})$
  such that the corresponding $N$-HBAV has multiplicative reduction;
  this is an open subset of $X({F}_{v})$, and it is non-empty (by the
  assumptions on $\bar{\theta}$ at places of $F$ dividing $q$, and the
  assumption that $\pi(f)$ is an unramified twist of the Steinberg
  representation, we see that $\rhobar(\Frob_v)$ has eigenvalues $1$
  and $\#k(v)$, and is congruent to a Galois representation attached
  to an unramified twist of a Steinberg representation, so any
  $N$-HBAV with multiplicative reduction suffices), as required.
	
        If $k=2$, then the only additional argument needed is one to
        ensure that if $\overline{\chi}_x^2=1$, then the abelian
        variety can be chosen to have good reduction rather than
        multiplicative reduction. This follows easily from the fact
        that $\rhobar|_{\Gal(\overline{F_x}/F_x)}$ is finite flat 
        (cf. the proof of Theorem 2.1 of \cite{kw}, which establishes
        a very similar result).
\end{proof}

Let $M$ be a totally real field, and let $(A,i,j)/M$ be an
$N$-HBAV. Fix an embedding $N\subset \R$. We recall some definitions
from section 4 of \cite{hsbt}. For each finite place $v$ of $M$ there
is a two dimensional Weil-Deligne representation $\WD_v(A,i)$ defined
over $\overline{N}$ such that if $\mathfrak{p}$ is a place of $N$ of
residue characteristic $p$ different from the residue characteristic
of $v$, we
have $$\WD(H^1(A\times\overline{M},\Qp)|_{\Gal(\overline{M}_v/M_v)}\otimes_{N_p}\overline{N}_\mathfrak{p})\cong\WD_v(A,i)\otimes_{\overline{N}}\overline{N}_\p.$$

\begin{defn}\label{defn:sym of abelian variety is automorphic}We say
  that $\Sym^{m}A$ is automorphic of type $\{\rho_{v}\}_{v\in S}$ if
  there is an RAESDC representation $\pi$ of $\GL_{m+1}(\A_{M})$ of
  weight 0 and type $\{\rho_{v}\}_{v\in S}$ such that for all finite
  places $v$ of
  $M$, $$\rec(\pi_{v})|\Art_{M_v}^{-1}|^{-m/2}=\Sym^{m}\WD_{v}(A,i).$$\end{defn}

\begin{theorem}\label{thm:potential modularity in weight 0 for sym of
    ab var}Let $E$, $A$ be as in the statement of Theorem
  \ref{thm:potential modularity: HBAV for rhobar}. Let $\mathcal{N}$
  be a finite set of even positive integers. Then there is a finite Galois totally real extension $F'/E$ and a place $w_{q}|q$ of $F'$ such that \begin{itemize}
	\item for any $n\in\mathcal{N}$, $\Sym^{n-1}A$ is automorphic over $F'$ of weight 0 and type $\{\Sp_{n}(1)\}_{\{w_{q}\}}$,
	\item The primes of $E$ dividing $l$ are unramified in $F'$, and 
	\item $F'$ is linearly disjoint from $\Qbar^{\ker({\rhobar_{f,\lambda}})}$ over $\Q$.
\end{itemize} 
	
\end{theorem}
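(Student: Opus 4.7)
The plan is to apply the main potential automorphy theorem for even-dimensional symmetric powers of Hilbert-Blumenthal abelian varieties from \cite{hsbt} (Theorem 4.3 of \emph{loc. cit.}) directly to the $N$-HBAV $A/E$ produced by Theorem \ref{thm:potential modularity: HBAV for rhobar}. The hypotheses of that theorem require an $N$-HBAV with multiplicative reduction at some finite place together with certain conditions on the residual image; the variety $A$ has been engineered precisely to meet these: it has multiplicative reduction at $v_q$, its residual mod-$\lambda'$ representation is (up to twist and dual) $\rhobar_{f,\lambda}|_{\Gal(\bar{E}/E)}$, and the local form at places dividing $l$ is of the shape required to deduce crystallinity of the relevant symmetric powers.

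First I would verify the image hypotheses of the HSBT theorem for each $n\in\mathcal{N}$. Since $\rhobar_{f,\lambda}$ has large image (we are free to invoke Lemma \ref{lem:modular form large image} to assume this by enlarging the set of excluded $\lambda$), and since $E$ has been chosen linearly disjoint from $\Qbar^{\ker(\rhobar_{f,\lambda})}$ over $\Q$, the restriction of $\rhobar_{f,\lambda}$ to $\Gal(\overline{E}/E)$ still has large image, and hence the symmetric powers $\Sym^{n-1}$ of the mod-$\lambda'$ representation of $A$ have the big image required by the residual irreducibility and adequacy hypotheses in \cite{hsbt}. Next I would invoke the HSBT potential automorphy theorem to produce a totally real Galois extension $F'/E$ over which $\Sym^{n-1}A$ is automorphic of weight $0$ for each $n\in\mathcal{N}$ simultaneously; this works for a single finite $\mathcal{N}$ because the theorem can be applied to the finite collection of residual representations at once, cutting out a common non-empty open condition in the relevant Hilbert modular variety.

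The key technical point is that the HSBT theorem is proved via a Moret-Bailly argument (Proposition 2.1 of \cite{hsbt}) which is flexible enough to impose the auxiliary constraints we need on $F'$: namely, that $F'$ be linearly disjoint from $\Qbar^{\ker(\rhobar_{f,\lambda})}$ over $\Q$, and that the primes of $E$ above $l$ be unramified (indeed split completely) in $F'$. Both conditions translate to non-empty open local conditions at appropriate finite places, which the Moret-Bailly statement is designed to accommodate, exactly as in the modifications to Taylor's arguments made in the proof of Theorem \ref{thm:potential modularity: HBAV for rhobar}. Finally, the multiplicative reduction of $A$ at $v_q$ ensures that $\Sym^{n-1}\WD_{v_q}(A,i)$ is the Weil-Deligne representation attached to $\Sp_n(1)$ (up to unramified twist), and hence any place $w_q$ of $F'$ above $v_q$ inherits this property, so the RAESDC representation produced has type $\{\Sp_n(1)\}_{\{w_q\}}$ in the sense of Definition \ref{defn:sym of abelian variety is automorphic}.

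The main obstacle is bookkeeping rather than genuinely new content: one must check that all the hypotheses of the HSBT theorem (which was written for elliptic curves over $\Q$ and HBAVs in a specific formulation) apply verbatim to our $A$, and that the auxiliary conditions on $F'$ (linear disjointness and unramifiedness at $l$) can be simultaneously imposed at the Moret-Bailly step. Since $A$ was constructed precisely to match the HSBT input and the disjointness/splitting conditions are standard outputs of the flexible form of Moret-Bailly's theorem, this is essentially a matter of citation, and no new arguments beyond those of \cite{hsbt} are required.
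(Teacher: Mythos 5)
The overall strategy — quote HSBT's potential automorphy result for symmetric powers of an HBAV and impose the two extra conditions on $F'$ via the flexible Moret--Bailly statement — matches the paper's. However, there is a genuine gap in how you handle the condition that $l$ be unramified in $F'$. You assert that both auxiliary conditions (linear disjointness and unramifiedness above $l$) "translate to non-empty open local conditions at appropriate finite places, which the Moret--Bailly statement is designed to accommodate," making the proof "essentially a matter of citation." That is true for the linear disjointness condition, but not for unramifiedness at $l$, and the paper explicitly flags this: "the latter point causes no difficulty, but the first point requires some minor modifications."

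The obstacle you are missing is that the prime $l$ in HSBT's Theorem 4.1 is \emph{not} the prime $l$ of the present paper (the paper even renames ours to $s$ to avoid confusion), and HSBT's moduli spaces $X_W$ / $T_{W_i}$ are constructed over $E$ relative to a different prime. Meanwhile our $A$ only has \emph{potentially} good reduction at places over $s$, so there is no direct way to force the field produced by Moret--Bailly applied to $X_W$ to be unramified above $s$. The paper's fix is to first pass to a solvable totally real extension $L/E$ (chosen linearly disjoint from $\Qbar^{\ker\rhobar_{f,\lambda}}$) over which $A$ has genuinely good reduction above $s$, build the moduli spaces there, take Weil restrictions $\Res_{L/E}(X_W)$ and $\Res_{LF/F}(T_{W_i})$ back down, apply Proposition 2.1 of HSBT to these restrictions of scalars (which now lets one impose unramifiedness of $s$ on the resulting $F^{(1)}/E$), and then descend the automorphy statement from $F'L$ to $F'$ by solvable base change. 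None of these steps appears in your proposal, and without them the unramifiedness claim does not follow. (A minor point: the relevant HSBT results are Theorems 4.1, 4.2 and 3.1, not Theorem 4.3.)
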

\begin{proof}This is essentially Theorem 4.1 of \cite{hsbt}. In
  particular, the proof in \cite{hsbt} establishes that there is a
  Galois totally real extension $F'/E$, and a place $w_{q}$ of $F'$
  lying over $q$ such that for any $n\in\mathcal{N}$, $\Sym^{n-1}A$ is
  automorphic over $F'$ of weight 0 and type
  $\{\Sp_{n}(1)\}_{\{w_{q}\}}$. Note that the $l$ used in their
  argument is \emph{not} the $l$ used here. To complete the proof, we
  need to establish that it is possible to obtain an $F'$ in which $l$
  is unramified, and which is linearly disjoint from
  $\Qbar^{\ker({\rhobar_{f,\lambda}})}$ over $\Q$. The latter point
  causes no difficulty, but the first point requires some minor
  modifications of the arguments of \cite{hsbt}. We now outline the
  necessary changes.

  To aid comparison to \cite{hsbt}, for the rest of this proof we will
  refer to our $l$ as $s$; all references to $l$ will be to primes of
  that name in the proofs of various theorems in \cite{hsbt}. We begin
  by choosing a finite solvable totally real extension $L$ of $E$, linearly
  disjoint from $\Qbar^{\ker({\rhobar_{f,\lambda}})}$ over $\Q$, such
  that the base change of $A$ to $L$ has good reduction at all places
  dividing $s$. Choose a prime $l$ as in the proof of Theorem 4.1 of
  \cite{hsbt}. We then apply a slight modification of Theorem 4.2 of \emph{loc.cit.},
  with the conclusion strengthened to include the hypothesis that $s$
  is unramified in $F'$. To prove this, in the proof of Theorem 4.2 of
  \emph{loc.cit.}, note that $F_1=E$. Choose all auxiliary primes not
  to divide $s$. Rather than constructing a
  moduli space $X_W$ over $E$, construct the analogous space over $L$,
  and consider the restriction of scalars $Y=\Res_{L/E}(X_W)$. Applying
  Proposition 2.1 of \cite{hsbt} to $Y$, rather than $X_W$, we may
  find a finite totally real Galois extension $F^{(1)}/E$ in which $s$
  is unramified, such that $Y$ has an $F^{(1)}$-point. Furthermore, we
  may assume that $F^{(1)}$ is linearly
  disjoint from $\Qbar^{\ker({\rhobar_{f,\lambda}})}$ over $\Q$. Note
  that an $F^{(1)}$-point of $Y$ corresponds to an $F^{(1)}L$-point of
  $X_W$.

We now make a similar modification to the proof of Theorem 3.1 of
\cite{hsbt}, replacing the schemes $T_{W_i}$ over $F$ with
$\Res_{LF/F}T_{W_i}$. We conclude that there is a finite Galois
totally real extension $F'/E$ in which $s$ is unramified, which is linearly
  disjoint from $\Qbar^{\ker({\rhobar_{f,\lambda}})}$ over $\Q$, such
  that for any $n\in\mathcal{N}$, $\Sym^{n-1}A$ is automorphic over
  $F'L$ of weight 0 and type $\{\Sp_{n}(1)\}_{\{w_{q}\}}$. Since the
  extension $F'L/F'$ is solvable, it follows from solvable base change
  (e.g. Lemma 1.3 of \cite{BLGHT}) that in fact for any $n\in\mathcal{N}$, $\Sym^{n-1}A$ is automorphic over
  $F'$ of weight 0 and type $\{\Sp_{n}(1)\}_{\{w_{q}\}}$, as required.\end{proof}We may
now twist $\rhobar$ by $\overline{\theta}^{-1}$ in order to deduce
results about $\rhobar_{f,\lambda}$. Let $N$ and $\lambda$ be as in
the statement of Theorem \ref{thm:potential modularity: HBAV for
  rhobar}. Fix an embedding $N_{\lambda'}\into\Qlbar$. Let $\theta$ be
the Teichm\"uller lift of $\overline{\theta}$, and let $\rho_{n}$
denote the action of $\Gal(\bar{E}/E)$
on $$\Sym^{n-1}(H^1(A\times\overline{E},\Ql)\otimes_{N_l}N_{\lambda'}\otimes\theta^{-1})\otimes_{N_\lambda'}\Qlbar.$$
By construction, $\rho_{n}$ is a lift of
$\Sym^{n-1}\rhobar_{f,\lambda}|_{\Gal(\overline{E}/E)}\otimes_{k_{f,\lambda}}\bar{\F}_l$
(where the embedding $k_{f,\lambda}\into\Flbar$ is determined by the
embedding $k\into\Flbar$ induced by the embedding
$N_{\lambda'}\into\Qlbar$). Note also that (again by construction) at
each place $x|l$ of
$E$, $$\rho_{2}|_{\Gal(\overline{E_x}/E_x)}\cong \begin{pmatrix}
  {\psi_{1}} & *\\ 0&
  {\psi_{2}}\omega^{2-k}\epsilon^{-1} \end{pmatrix} $$with $\psi_{1}$,
$\psi_{2}$ unramified lifts of
$\overline{\psi}_{1}|_{\Gal(\overline{E_x}/E_x)}$ and
$\overline{\psi}_{2}|_{\Gal(\overline{E_x}/E_x)}$ respectively, and
$\omega$ the Teichm\"uller lift of $\epsilon$.

\begin{corollary}\label{cor:potential modularity in weight 0 for f} Let $\mathcal{N}$ be a finite
  set of even positive integers. Then there is a Galois totally real
  extension $F'/E$ and a place $w_{q}|q$ of $F'$ such
  that \begin{itemize}
	\item for any $n\in\mathcal{N}$, $\rho_{n}|_{\Gal(\Qbar/F')}$ is automorphic of weight 0 and type $\{\Sp_{n}(1)\}_{\{w_{q}\}}$,
	\item every prime of $E$ dividing $l$ is unramified in
          $F'$ (so that $l$ is unramified in $F'$), and 
	\item $F'$ is linearly disjoint from $\Qbar^{\ker({\rhobar_{f,\lambda}})}$ over $\Q$.
        \end{itemize}Let $\iota:\Qlbar\isoto\C$, and for
        $n\in\mathcal{N}$ let $\pi_{n}$ be the RAESDC representation
        of $\GL_n(\A_{F'})$ with
        $r_{l,\iota}(\pi_{n})\cong\rho_{n}|_{\Gal(\Qlbar/F')}$. If $k=2$ then $\pi_{n,x}$
        is unramified for each $x|l$, and if $k>2$ then for each place
        $x|l$ of $F'$, $\pi_{n,x}$ is a principal series
        representation
        $\nind_{B_{n}(F'_x)}^{\GL_{n}(F'_x)}(\chi_{1},\dots,\chi_{n})$
        with
        $\iota^{-1}\chi_{i}\circ\Art_{F'_x}^{-1}|_{I_x}=\omega^{(i-1)(2-k)}$ and 
 $v_l(\iota^{-1}\chi_i(l))=[F'_x:\Q_l]\left(i-1+\frac{1-n}{2}\right)$, where $v_l$ is the $l$-adic valuation on $\Qlbar$ with $v_l(l)=1$.
	
\end{corollary}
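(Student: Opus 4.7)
The strategy is to apply Theorem~\ref{thm:potential modularity in weight 0 for sym of ab var} to the $N$-HBAV $A$, obtain an RAESDC representation $\pi'_n$ attached to $\Sym^{n-1}H^1(A, \Qlbar)|_{G_{F'}}$, and then twist back on both sides by the Teichm\"uller lift of $\overline{\theta}$ to translate between $\Sym^{n-1}$ of the $H^1$ of $A$ and the representation $\rho_n$.

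First, apply Theorem~\ref{thm:potential modularity in weight 0 for sym of ab var} to the finite set $\mathcal{N}$. This produces the extension $F'/E$, the place $w_q$, and the unramifiedness and linear disjointness conditions appearing in the statement of the corollary; moreover for each $n\in\mathcal{N}$ it produces an RAESDC representation $\pi'_n$ of $\GL_n(\A_{F'})$ of weight $0$ and type $\{\Sp_n(1)\}_{\{w_q\}}$ whose associated $l$-adic Galois representation, via $\iota$, is the restriction to $G_{F'}$ of $\Sym^{n-1}$ of $H^1$ of $A$, tensored up through the fixed embedding $N_{\lambda'}\hookrightarrow\Qlbar$.

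Next, I would define $\pi_n$ as an explicit twist of $\pi'_n$. The Teichm\"uller lift $\theta$ of $\overline{\theta}$ is a finite-order character of $G_F$ unramified at $q$; via class field theory it corresponds to a finite-order Hecke character $\Theta_F$ of $F$, and I set $\Theta := \Theta_F\circ N_{F'/F}$, a finite-order Hecke character of $F'$ that is unramified at $w_q$ and at all places of $F'$ above $l$ (the latter because $\theta$ is the Teichm\"uller lift of a character and $l$ is unramified in $F'$). Then
\[
\pi_n := \pi'_n \otimes (\Theta^{-1}\circ\det)^{n-1}.
\]
Because $\Theta$ is a finite-order Hecke character, this twist preserves the RAESDC property and weight $0$; because $\Theta$ is unramified at $w_q$, the local component at $w_q$ remains an unramified twist of $\Sp_n(1)$, so the type $\{\Sp_n(1)\}_{\{w_q\}}$ is preserved. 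Compatibility of automorphic twisting with twisting of Galois representations (together with the computation $\Sym^{n-1}(V\otimes\theta^{-1}) = (\Sym^{n-1}V)\otimes\theta^{-(n-1)}$ for a two-dimensional $V$) identifies $r_{l,\iota}(\pi_n)$ with $\rho_n|_{G_{F'}}$, giving the first bullet point.

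For the local structure at a place $x|l$ of $F'$, I would combine the ordinary form of $\rho_2|_{G_{E_x}}$ (which is inherited by $\rho_2|_{G_{F'_x}}$) with the fact that $A$ has potentially good reduction at all places above $l$: $\Sym^{n-1}\rho_2|_{G_{F'_x}}$ is then Hodge-Tate with a successive-extension structure whose graded characters are $\psi_1^{n-i}\psi_2^{i-1}\omega^{(2-k)(i-1)}\epsilon^{-(i-1)}$ for $i=1,\ldots,n$, with $\psi_1,\psi_2$ unramified. Local-global compatibility at $l$ (available here because $\pi'_n$ is produced geometrically from the $N$-HBAV $A$, not from an abstract unitary group automorphic representation) then determines $\pi_{n,x}$: the ordinariness forces the monodromy operator of the associated Weil--Deligne representation to be trivial, so $\pi_{n,x}$ is a principal series $\nind_{B_n(F'_x)}^{\GL_n(F'_x)}(\chi_1,\ldots,\chi_n)$. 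When $k=2$ the factor $\omega^{2-k}$ is trivial and all graded characters become unramified after the twist by $|\cdot|^{(1-n)/2}$ built into the normalization of $r_l$, yielding unramified $\pi_{n,x}$; when $k>2$ the inertial restrictions of the $\chi_i$ and the valuations $v_l(\iota^{-1}\chi_i(l))$ claimed in the corollary come directly from reading off the graded characters and applying the relation $r_{l,\iota}(\pi)|_{G_v}^{ss} = r_l(\iota^{-1}\pi_v)^\vee(1-n)^{ss}$ from Proposition~\ref{existence of galois reps for raesdc}.

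The main obstacle is the local calculation at $x|l$: one must track three normalization twists simultaneously (the $|\cdot|^{(1-n)/2}$ in the definition of $r_l$, the Tate twist by $1-n$, and the twist by $\Theta^{-(n-1)}$), and one must invoke local-global compatibility at the residue characteristic, which is why the geometric provenance of $\pi'_n$ as $\Sym^{n-1}$ of a piece of the cohomology of an abelian variety (rather than just an abstract automorphic form on a unitary group) is essential.
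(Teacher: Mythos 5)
Your overall strategy matches the paper's: apply Theorem~\ref{thm:potential modularity in weight 0 for sym of ab var} to produce automorphy of $\Sym^{n-1}A$ over $F'$, twist back by the Teichm\"uller lift $\theta$, reduce the local statement at $x\mid l$ to the case $n=2$, and use local-global compatibility for the abelian variety at $l$. (The paper treats the twisting step as implicit, whereas you spell it out; both are fine.) However, there are a few places where your local argument at $l$ is not quite right.

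First, the claim that $\Theta$ is unramified at all places above $l$ ``because $\theta$ is the Teichm\"uller lift'' is false for $k>2$. The Teichm\"uller lift of $\overline{\theta}$ is ramified exactly where $\overline{\theta}$ is, and by construction $\overline{\theta}^2|_{I_x}=\epsilon^{k-2}|_{I_x}$, which is nontrivial when $k>2$. Indeed this ramification is precisely the source of the $\omega^{(i-1)(2-k)}$ factors in the statement of the corollary, so asserting unramifiedness of $\Theta$ at $l$ contradicts what you are trying to prove. Fortunately this erroneous remark is not actually used later, since you read off the graded characters of $\rho_2|_{G_{E_x}}$ directly from the displayed formula preceding the corollary rather than deriving them from $H^1(A)$ and $\theta$; but the remark should be deleted.

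Second, the reference to Proposition~\ref{existence of galois reps for raesdc} for the local-global compatibility at $x\mid l$ is off the mark: that proposition is stated only for $v\nmid l$, and the whole point of the paper (emphasized in the introduction) is that such compatibility at $l$ is \emph{not} yet available for automorphic representations of unitary groups in general. The compatibility at $l$ you need comes instead from Definition~\ref{defn:sym of abelian variety is automorphic}, which is formulated in terms of $\WD_v(A,i)$ at \emph{all} finite places including $v\mid l$, and the explicit computation of $\WD_v(A,i)$ for $v\mid l$ from the shape of $T_{\lambda'}A$ (the paper cites Lemma B.4.1 and section B.2 of \cite{cdt}). You correctly observe that the geometric provenance is essential, but the cited proposition is not the right source.

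Third, ``ordinariness forces the monodromy operator to be trivial'' is not a valid implication: an ordinary de Rham representation can be semistable non-crystalline, giving nontrivial monodromy. The correct reason the monodromy of $\Sym^{n-1}\WD_v(A,i)$ vanishes is that $A$ has potentially good reduction at every $v\mid l$ (guaranteed by Theorem~\ref{thm:potential modularity: HBAV for rhobar}), so $\WD_v(A,i)$ has trivial monodromy, and $\Sym^{n-1}$ preserves this. Relatedly, to conclude that $\pi_{n,x}$ is the full principal series rather than a proper subquotient, the cleanest route is the one the paper takes: the candidate principal series is irreducible (the inertial characters $\omega^{(i-1)(2-k)}$ and the valuations $v_l(\iota^{-1}\chi_i(l))$ are pairwise distinct because $k>2$), so it suffices to match Frobenius-semisimplifications.
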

\begin{proof}This is a straightforward consequence of Theorem
  \ref{thm:potential modularity in weight 0 for sym of ab var}. The
  only part that needs to be checked is the assertion about the form
  of $\pi_{n,x}$ for $x|l$ when $k>2$. Without loss of generality, we
  may assume that $2\in\mathcal{N}$. Note firstly that any principal series
  representation of the given form is irreducible, so that we need
  only check
  that $$\iota^{-1}\rec(\pi_{n,x})=\bigoplus_{i=1}^{n}
  \omega^{(i-1)(2-k)}\alpha_i, $$where $\alpha_i$ is an unramified
  character with $v_l(\alpha_i(l))=[F'_x:\Q_l]\left(i-1+\frac{1-n}{2}\right)$. By Definition
  \ref{defn:sym of abelian variety is automorphic} and Theorem
  \ref{thm:potential modularity in weight 0 for sym of ab var} we see
  that $\rec(\pi_{n,x})=\Sym^{n-1}\rec(\pi_{2,x})$, so
  it suffices to establish the result in the case $n=2$, or rather
  (because of the compatibility of $\rec$ with twisting) it suffices
  to check the corresponding result for $\WD_{v}(A,i)$ at places
  $v|l$. This is now an immediate consequence of local-global
  compaitibility, and follows at once from, for example, Lemma B.4.1 of
  \cite{cdt}, together with the computations of the Weil-Deligne
  representations associated to characters in section B.2 of
  \emph{loc. cit.}
	
\end{proof}

\section{Changing weight}\label{hida theory}\subsection{}We now explain how to deduce
from the results of the previous section that
$\Sym^n\rhobar_{f,\lambda}$ is potentially automorphic of the correct
weight (that is, the weight of the conjectural automorphic representation corresponding to $\Sym^n\rho_{f,\lambda}$), rather than potentially automorphic of weight $0$. We
accomplish this as a basic consequence of Hida theory; note
that we simply need a congruence, rather than a result about families,
and the result follows from a straightforward combinatorial argument. This result
is certainly known to the experts, but as we have been unable to find
a reference which provides the precise result we need, we present a
proof in the spirit of the arguments of \cite{taylorthesis}.

\subsection{}\label{chenevier section}
For each $n$-tuple of integers $a=(a_1,\dots,a_n)$ with $a_1\geq\dots\geq a_n$ there is an irreducible representation of the algebraic group $\GL_n$ defined over $\Ql$, with highest weight (with respect to the Borel subgroup of upper-triangular matrices) given by $$\diag(t_1,\dots,t_n)\mapsto\prod_{i=1}^n t_i^{a_i}.$$ We will need an explicit model of this representation, for which we follow section 2 of  \cite{MR2075765}. 

Let $K$ be an algebraic extension of $\Ql$, $N$ the subgroup of $\GL_n(K)$
consisting of upper triangular unipotent matrices, $\overline{N}$
the subgroup of lower triangular unipotent matrices, and $T$ the subgroup of diagonal matrices. Let
$R:=K[\GL_n]=K[\{X_{i,j}\}_{1\leq i,j\leq n},\det(X_{i,j})^{-1}]$. We
have commuting natural actions of $\GL_n(K)$ on $R$ by left and right
multiplication. For an element $g\in\GL_n(K)$ we denote these actions
by $g_l$ and $g_r$ respectively, so that if we let $M$ denote the
matrix $(X_{i,j})_{i,j}\in M_n(R)$, we have $$(g_l.X)_{i,j}=g^{-1}M$$
and $$(g_r.X)_{i,j}=Mg.$$ If $(t_1,\dots,t_n)\in\Z^n$, we say that an
element $f\in R$ is of left weight $t$ (respectively of right weight
$t$) if for all $d\in T$ we have $d_lf=t^{-1}(d)f$ (respectively $d_rf=t(d)f$) where $$t(\diag(x_1,\dots,x_n))=\prod_{i=1}^nx_i^{t_i}.$$ 

For each $1\leq i\leq n$ and each $i$-tuple $j=(j_1,\dots,j_i)$, $1\leq j_1<\dots<j_i\leq n$, we let $Y_{i,j}$ be the minor of order $i$ of $M$ obtained by taking the entries from the first $i$ rows and columns $j_1$,\dots,$j_i$. Let $R^{\overline{N}}$ denote the subalgebra of $R$ of elements fixed by the $g_l$-action of $\overline{N}$; it is easy to check that $Y_{i,j}\in R^{\overline{N}}$. Because $T$ normalises $\overline{N}$ it acts on $R^{\overline{N}}$ on the left, and we let $R_t^{\overline{N}}$ be the sub $K$-vector space of elements of left weight $t$; this has a natural action of $\GL_n(K)$ induced by $g_r$.

\begin{prop}\label{sec:weight-changing-chenevier-wts-N-invts}
    Suppose that $t_1\geq\dots\geq t_n$. Then $R^{\overline{N}}_t$ is a model of the irreducible algebraic representation of $\GL_n(K)$ of highest weight $t$. Furthermore, it is generated as a $K$-vector space by the monomials in $Y_{i,j}$ of left weight $t$, and a highest weight vector is given by the unique monomial in $Y_{i,j}$ of left and right weight $t$.
  \end{prop}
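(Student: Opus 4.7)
The plan is to identify $R^{\overline{N}}_t$ abstractly via Peter--Weyl and then verify the concrete description in terms of minors by a short combinatorial argument.

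First, apply the Peter--Weyl decomposition of $R = K[\GL_n]$ as a $(\GL_n \times \GL_n)$-bimodule:
\[
R \cong \bigoplus_{\lambda} V_\lambda^* \otimes V_\lambda,
\]
where $\lambda$ runs over dominant characters of $T$ and the two factors carry the left and right actions respectively. Taking invariants for the left action of $\overline{N}$ picks out the highest-weight line of $V_\lambda^*$ with respect to the opposite Borel, a one-dimensional space whose natural $T$-weight is $-\lambda$; in the paper's sign convention this line has left weight $\lambda$. Hence $R^{\overline{N}}_t$ vanishes unless $t$ is dominant, and in that case it is isomorphic to $V_t$ as a right $\GL_n(K)$-module, giving the first assertion.

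For the explicit description, a block-triangular computation (the upper-left $i \times i$ block of a lower unipotent matrix is itself unipotent, hence of determinant $1$) shows that each $Y_{i,j}$ lies in $R^{\overline{N}}$ with left weight $e_1 + \cdots + e_i$ and right weight $e_{j_1} + \cdots + e_{j_i}$. The Cauchy--Binet formula expands the right action of $g$ on $Y_{i,j}$ as a linear combination of the $Y_{i,j'}$, so the span $W_t \subseteq R^{\overline{N}}_t$ of monomials of left weight $t$ is stable under the right $\GL_n$-action. Imposing left weight $t$ on a monomial $\prod Y_{i,j}^{a_{i,j}}$ forces $\sum_j a_{i,j} = t_i - t_{i+1}$ for each $i$ (with $t_{n+1} := 0$; a negative $t_n$ is accommodated by allowing powers of $\det^{-1} = Y_{n,(1,\ldots,n)}^{-1} \in R$). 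For any $i$-element subset $j$, the weight $w(j) = e_{j_1} + \cdots + e_{j_i}$ is dominated by $e_1 + \cdots + e_i$ in the dominance partial order, with equality iff $j = (1,\ldots,i)$; summing, the right weight of any monomial of left weight $t$ is dominated by $t$, with equality only for the distinguished monomial
\[
v := \prod_{i=1}^n Y_{i,(1,\ldots,i)}^{t_i - t_{i+1}}.
\]

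Finally, a dual block-triangular calculation shows each leading principal minor $Y_{i,(1,\ldots,i)}$ is right $N$-invariant with right $T$-weight $e_1 + \cdots + e_i$, so $v$ is a nonzero right $N$-invariant vector of right $T$-weight $t$, i.e.\ a highest weight vector for the right action. Then $W_t$ is a nonzero right $\GL_n$-subrepresentation of the irreducible module $R^{\overline{N}}_t \cong V_t$, forcing $W_t = R^{\overline{N}}_t$, which yields the spanning statement and the identification of $v$ as the unique monomial of both left and right weight $t$. The main thing to get right is the bookkeeping of sign conventions for the left $T$-action, which ensures that $R^{\overline{N}}_t$ is identified with $V_t$ itself rather than with its dual; all other steps are routine.
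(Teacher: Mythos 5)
Your argument is correct, and it supplies an actual proof where the paper gives only a citation to Chenevier's Proposition~2.2.1; there is nothing to compare against in the paper itself, so I'll just record that your route is the standard one and that all the steps hold up. The Peter--Weyl decomposition $K[\GL_n]\cong\bigoplus_\lambda V_\lambda^*\otimes V_\lambda$ is available over any field of characteristic $0$ since $\GL_n$ is split, and your sign bookkeeping is right: the $\overline{N}$-fixed line in $V_\lambda^*$ carries the $\rho^*$-eigenvalue $\lambda^{-1}(d)$, which in the paper's convention $d_l f = t^{-1}(d)f$ is left weight $\lambda$, so $R^{\overline{N}}_t\cong V_t$ as a right $\GL_n$-module. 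The block-triangular check that $Y_{i,j}\in R^{\overline{N}}$ with left weight $e_1+\cdots+e_i$ and right weight $e_{j_1}+\cdots+e_{j_i}$, the Cauchy--Binet stability of the span $W_t$ of monomials of left weight $t$ under the right action, and the dominance-order comparison (using $j_s\geq s$, hence $\lvert\{s:j_s\leq k\}\rvert\leq\min(k,i)$ with equality forcing $j=(1,\ldots,i)$) showing the right weight of any such monomial is $\preceq t$ with equality only for $v=\prod_i Y_{i,(1,\ldots,i)}^{t_i-t_{i+1}}$ are all correct; and the dual block-triangular computation exhibiting $v$ as a nonzero right-$N$-invariant vector clinches both that $W_t$ is a nonzero $\GL_n$-subrepresentation of the irreducible $R^{\overline{N}}_t$ (hence all of it) and that $v$ is the highest weight vector. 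The handling of $t_n<0$ via $\det^{-1}=Y_{n,(1,\ldots,n)}^{-1}$ is the right caveat, though note the paper's Lemma~\ref{lem:lowest weight vector divisibility by l} subsequently restricts to $t_n\geq 0$ so that $X_t$ is spanned by honest monomials.
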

  \begin{proof}
    This follows from Proposition 2.2.1 of \cite{MR2075765}.
  \end{proof}

  Assume that in fact $t_1\geq\dots\geq t_n\geq 0$, and let $X_t$
  denote the free $\bigO_K$-module with basis the monomials in
  $Y_{i,j}$ of left weight $t$. By Proposition
  \ref{sec:weight-changing-chenevier-wts-N-invts}, $X_t$ is a
  $\GL_n(\bigO_K)$-stable lattice in $R^{\overline{N}}_t$. Let $T^+$
  be the submonoid of $T$ consisting of elements of the
  form $$\diag(l^{b_1},\dots,l^{b_n})$$ with $b_1\geq\dots\geq b_n\geq
  0$; then $X_t$ is certainly also stable under the action
  of $T^+$. Let $\alpha=\diag(l^{b_1},\dots,l^{b_n})\in T^+$. We wish
  to determine the action of $\alpha$ on $X_t$.

\begin{lemma}\label{lem:lowest weight vector divisibility by l}
  If $Y\in X_t$ is a monomial in the $Y_{i,j}$, then
  $\alpha(Y)\subset l^{\sum_{i=1}^nb_it_{n+1-i}}X_t$. If in fact
  $b_1>\dots>b_n$ then $\alpha(Y)\subset
  l^{1+\sum_{i=1}^nb_it_{n+1-i}}X_t$ unless $Y$ is the unique lowest
  weight vector.
\end{lemma}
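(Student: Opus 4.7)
The plan is a direct computation using the explicit model $R^{\overline{N}}_t$. The key step is to describe the right action of the diagonal element $\alpha$ on each generator $Y_{i,j}$, then to extend multiplicatively and bound the resulting $l$-adic valuation using a simple rearrangement-type inequality.

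First I would compute how a diagonal $d = \diag(x_1,\dots,x_n)$ acts on $Y_{i,j}$ from the right. Since $(d_r\cdot X)_{i,j} = (Md)_{i,j}$, the action scales the $k$-th column of $M$ by $x_k$; as $Y_{i,j}$ is multilinear in the columns $j_1<\dots<j_i$, one gets $d_rY_{i,j} = (x_{j_1}\cdots x_{j_i})Y_{i,j}$ and hence $\alpha Y_{i,j} = l^{b_{j_1}+\dots+b_{j_i}}Y_{i,j}$. For a monomial $Y = \prod_{i,j}Y_{i,j}^{c_{i,j}}$ set $d_i := \sum_j c_{i,j}$. The left-weight condition $Y\in R^{\overline{N}}_t$ is exactly $d_i = t_i-t_{i+1}$ (with $t_{n+1}:=0$), since each $Y_{i,j}$ has left weight $(1,\dots,1,0,\dots,0)$ with $i$ ones. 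Writing $\alpha(Y)=l^eY$ gives $e = \sum_{i,j}c_{i,j}(b_{j_1}+\dots+b_{j_i})$.

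For the first assertion, the hypothesis $b_1\geq\dots\geq b_n\geq 0$ yields $b_{j_1}+\dots+b_{j_i}\geq b_{n-i+1}+\dots+b_n$ for every increasing $i$-tuple $(j_1,\dots,j_i)$ in $\{1,\dots,n\}$ (the minimum sum of $i$ terms of a decreasing sequence is achieved by the last $i$). Thus
$$e \;\geq\; \sum_i d_i(b_{n-i+1}+\dots+b_n) \;=\; \sum_{k=1}^n b_k\sum_{i\geq n+1-k}d_i \;=\; \sum_{k=1}^n b_k t_{n+1-k}$$
after swapping summations and using $t_k = \sum_{i\geq k}d_i$. For the second assertion, when $b_1>\dots>b_n$ the $b_k$ are distinct nonnegative integers, so the above inequality is strict---and hence off by at least $1$---unless $\{j_1,\dots,j_i\}=\{n-i+1,\dots,n\}$. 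Consequently, any monomial $Y$ containing a factor $Y_{i,j}$ with $j\neq(n-i+1,\dots,n)$ satisfies $e\geq 1+\sum_k b_k t_{n+1-k}$. The only monomial of left weight $t$ supported entirely on $\{Y_{i,(n-i+1,\dots,n)}\}_i$ is $\prod_i Y_{i,(n-i+1,\dots,n)}^{t_i-t_{i+1}}$; computing its right weight gives $(t_n,t_{n-1},\dots,t_1)=w_0\cdot t$, identifying it as the unique lowest weight vector of $R^{\overline{N}}_t$.

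I do not anticipate a serious obstacle: the argument is purely combinatorial. The only mildly delicate points are (i) keeping straight that the $\alpha$-action on $X_t$ is the \emph{right} action (and so multiplies each $Y_{i,j}$ by $l^{b_{j_1}+\dots+b_{j_i}}$, not by a quantity involving the row indices), and (ii) matching the extremal monomial with the lowest weight vector, for which the right-weight computation above suffices.
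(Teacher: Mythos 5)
Your proof is correct, and its skeleton matches the paper's: in both cases the key observation is that $\alpha$ acts on a right-weight-$v$ vector by $l^{\sum_i b_i v_i}$, so the lemma reduces to showing $\sum_i b_i v_i \geq \sum_i b_i t_{n+1-i}$ for every weight $v$ occurring in $R^{\overline{N}}_t$, with strict inequality (hence off by at least $1$) when $b_1 > \dots > b_n$ and $v \neq (t_n,\dots,t_1)$. Where you diverge is in how this inequality is established. The paper quotes the general weight-theoretic fact that every weight of an irreducible highest weight representation is obtained from the lowest weight by successively adding positive roots $e_i - e_j$ ($i<j$), each addition changing the sum by $b_i - b_j \geq 0$ (strictly positive if $b_1>\dots>b_n$). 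You instead read the weight $v$ off the explicit monomial $\prod Y_{i,j}^{c_{i,j}}$ and bound $\sum b_i v_i$ directly by a rearrangement-type argument on the column indices $j$, then identify the unique extremal monomial as the lowest weight vector by computing its right weight. Your route is slightly more elementary in that it does not invoke the abstract weight-lattice fact, at the cost of more bookkeeping with the left-weight constraint $d_i = t_i - t_{i+1}$; either way the proof is short and both pieces of bookkeeping are carried out correctly.
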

\begin{proof}
  If $Y$ has (right) weight $(v_1,\dots,v_n)$, then
  $\alpha(Y)=l^{\sum_{i=1}^nb_iv_i}Y$. The unique lowest weight vector has
  weight $(t_n,\dots,t_1)$, so it suffices to prove that for any other
  $Y$ of weight $(v_1,\dots,v_n)$ which occurs in
  $R^{\overline{N}}_t$, the quantity $\sum_{i=1}^nb_iv_i$ is at least
  as large, and is strictly greater if $b_1>\dots>b_n$. However, by
  standard weight theory we know that we may obtain $(v_1,\dots,v_n)$
  from $(t_n,\dots,t_1)$ by successively adding vectors of the form
  $(0,\dots,1,0\dots,0,-1,0,\dots,0)$, and it is clear that the
  addition of such a vector does not decrease the sum, and in fact
  increases it if $b_1>\dots>b_n$, as required.
\end{proof}

We define a new action of $T^+$ on $X_t$, which we denote by $\cdot{}_{twist}$, by multiplying the natural action of $\diag(l^{b_1},\dots,l^{b_n})$ by $l^{-\sum_{i=1}^nb_it_{n+1-i}}$; this is legitimate by Lemma \ref{lem:lowest weight vector divisibility by l}.
\subsection{}

Fix for the rest of this section a choice of isomorphism
$\iota:\Qlbar\isoto\C$. Assume for the rest of this
section that $F'$ is a totally real field in which each $l$ is
unramified, and
$\pi'$ is an RAESDC representation of $\GL_n(\A_{F'})$ of weight 0 and
type $\{\Sp_n(1)\}_{\{w_q\}}$ for some place $w_q|q$ of ${F'}$, with
$(\pi')^\vee=\chi\pi'$. Suppose furthermore that there is an integer
$k>2$ such that
  \begin{itemize}
  \item 
 
for each place $x|l$, $\pi'_x$ is a principal series
 $\nind_{B_{n}(F_x)}^{\GL_{n}(F_x)}(\chi_{1},\dots,\chi_{n})$
        with  $v_l(\iota^{-1}\chi_i(l))=[F'_x:\Q_l]\left(i-1+\frac{1-n}{2}\right)$ and
        $\iota^{-1}\chi_{i}\circ\Art_{F_x}^{-1}|_{I_x}=\omega^{(i-1)(2-k)}$. \end{itemize}
 (See Corollary \ref{cor:potential modularity in weight 0
   for f} for an example of such a representation.) We transfer to a
 unitary group, following section 3.3 of \cite{cht}. Firstly, we make
 a quartic totally real Galois extension $F/{F'}$, linearly disjoint from
 $\Qbar^{\ker \overline{r}_{l,\iota}(\pi)}$ over $\Q$, such that $w_q$
 and all primes dividing $l$ split in $F$. Let $S(B)$ be the set of
 places of $F$ lying over $w_q$. Let $E$ be a imaginary quadratic
 field in which $l$ and $q$ split, such that $E$ is
 linearly disjoint from $\Qbar^{\ker \overline{r}_{l,\iota}(\pi)}$
 over $\Q$. Let $M=FE$. Let $c$ denote the nontrivial element of
 $\Gal(M/F)$. Let $S_l$ denote the places of $F$ dividing $l$, and let
 $\tilde{S}_l$ denote a set of places of $M$ dividing $l$ such that
 the natural map $\tilde{S}_l\to S_l$ is a bijection. If $v|l$ is a
 place of $F$ then we write $\tilde{v}$ for the corresponding place in
 $\tilde{S}_l$.

\begin{lemma}
  There is a finite order character $\phi:M^\times\backslash \A_M^\times\to\C^\times$ such that
  \begin{itemize}
  \item $\phi\circ N_{M/F}=\chi\circ N_{M/F}$, and
\item $\phi$ is unramified at all places lying over $S(B)$ and at all places in $\tilde{S}_l$.
  \end{itemize}

\end{lemma}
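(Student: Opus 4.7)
My plan is to interpret the identity $\phi \circ N_{M/F} = \chi \circ N_{M/F}$ as the identity $\phi \cdot \phi^c = \chi \circ N_{M/F}$ of characters on $\A_M^\times/M^\times$ (using that for the CM extension $M/F$ we have $N_{M/F}(x) = x \cdot c(x)$), and to construct $\phi$ by a class-field-theoretic descent along $M/F$, exploiting the fact that every prescribed place of $F$ splits in $M$.

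The initial observation is that since $\pi'$ is RAESDC of weight $0$ with $(\pi')^\vee \cong \chi \pi'$, the central-character identity $\omega_{\pi'}^{-1} = \chi^n \omega_{\pi'}$ combined with the fact that the weight-$0$ hypothesis forces $\omega_{\pi'}$ to have trivial infinity type shows that $\chi^n$ is a finite order Hecke character. Since there are no nontrivial $n$-th roots of $|\cdot|$, the character $\chi$ itself is finite order, and hence so is $\chi \circ N_{M/F}$.

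Next, I would produce a finite order character $\phi_0$ of $\A_M^\times/M^\times$ with $\phi_0|_{\A_F^\times} = \chi$. By class field theory the restriction map $\mathrm{Hom}(\A_M^\times/M^\times, \C^\times) \to \mathrm{Hom}(\A_F^\times/F^\times, \C^\times)$ has cokernel controlled by $\mathrm{Gal}(M/F)$, and a finite order lift of $\chi$ exists. For such $\phi_0$, the characters $\phi_0 \phi_0^c$ and $\chi \circ N_{M/F}$ agree on $\A_F^\times$ (both restricting to $\chi^2$), so their ratio is a character of $\A_M^\times/M^\times$ trivial on $\A_F^\times$. By Hilbert 90 applied to $\mathrm{Gal}(M/F) = \langle c\rangle$ acting on $\mathrm{Hom}(\A_M^\times/M^\times, \C^\times)$, any such ratio is of the form $\alpha^c/\alpha$ for some finite order $\alpha$. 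Replacing $\phi_0$ by $\phi_0 \alpha^{-1}$ produces a finite order $\phi$ satisfying $\phi \phi^c = \chi \circ N_{M/F}$.

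Finally, to arrange unramifiedness at the places of $M$ lying over $S(B)$ and at the places in $\tilde{S}_l$, I would use that by construction each such place of $F$ splits in $M = FE$, since $q$ and $l$ both split in $E$. At a split place $\tilde v \mid v$, the identification $M_{\tilde v}^\times \cong F_v^\times$ decouples the norm relation into independent prescriptions for $\phi_{\tilde v}$ and $\phi_{c(\tilde v)}$: it suffices to choose $\phi_{\tilde v}$ unramified and let $\phi_{c(\tilde v)}$ absorb any local ramification of $\chi_v$. A finite order global adjustment to $\phi$ at the split places (realised via weak approximation within the group of finite order characters of $\A_M^\times/M^\times$ trivial on $\A_F^\times$, which does not disturb the norm identity) then yields the desired $\phi$. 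The main obstacle is coordinating the global norm identity with the prescribed local behaviour; this is resolved precisely because all prescribed places split in $M/F$, which eliminates the Grunwald--Wang type local-global obstruction and allows the local adjustments to be made freely.
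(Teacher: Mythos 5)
Your argument is correct in substance but re-derives, rather than cites, the two lemmas from \cite{cht} that the paper's proof invokes (Lemma 4.1.1, which produces a finite-order Hecke character of $\A_M^\times/M^\times$ with prescribed components at split places, and Lemma 4.1.4, which handles the norm condition once $\chi$ is unramified at the relevant places). Your reformulation $\phi\phi^c = \chi\circ N_{M/F}$ of the norm condition is correct, as is the observation that $\chi$ has finite order. However, your Hilbert 90 step is a no-op: once $\phi_0|_{\A_F^\times} = \chi$, the identity $\phi_0\phi_0^c = \chi\circ N_{M/F}$ holds automatically, since $\phi_0(x)\phi_0(x^c) = \phi_0(xx^c) = \phi_0|_{\A_F^\times}(N_{M/F}(x)) = \chi(N_{M/F}(x))$ for all $x\in\A_M^\times$; the ratio you feed into Hilbert 90 is already trivial, so that step does nothing and its inclusion suggests you did not notice the norm condition already holds at that stage. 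All the genuine work is therefore concentrated in your final paragraph, where you adjust $\phi_0$ by a character $\alpha$ trivial on $\A_F^\times$ to kill ramification at the prescribed split places; note that $\alpha|_{\A_F^\times}=1$ forces $\alpha_{c\tilde v} = \alpha_{\tilde v}^{-1}$ at each split $v$, so the component at $c\tilde v$ is determined once $\alpha_{\tilde v}$ is chosen, and one must still verify that a global finite-order $\alpha$ with the prescribed components at these finitely many places exists. That existence statement is precisely Lemma 4.1.1 of \cite{cht}, and your sketch should be tightened to spell this out rather than gesturing at weak approximation. The paper's route is more economical: it uses Lemma 4.1.1 once to twist $\chi$ to a character unramified at $S(B)\cup S_l$, then cites Lemma 4.1.4 directly. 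Both approaches hinge on the same key fact that the places in question split in $M/F$.
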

\begin{proof}
By Lemma 4.1.1 of \cite{cht} (or more properly its proof, which shows that the character produced may be arranged to have finite order) there is a finite order character $\psi:M^\times\backslash \A_M^\times\to\C^\times$ such that for each $v\in S_l$, $\psi|_{{M_{\tilde{v}}}^\times}=1$ and $\psi|_{{M_{c\tilde{v}}}^\times}=\chi|_{F^\times_v}$, and such that $\psi$ is unramified at each place in $S(B)$. It now suffices to prove the result for the character  $\chi(\psi|_{\A_{F}^\times})^{-1}$, which is unramified at $S(B)\cup {S}_l$, and the result now follows from Lemma 4.1.4 of \cite{cht}.
\end{proof}
Now let $\pi=\pi'_M\otimes\phi$, which is an RACSDC representation of
$\GL_n(\A_M)$, satisfying:
\begin{itemize}
\item $\pi$ has weight $0$.
\item $\pi$ has type $\{\Sp_n(1)\}_{w|w_q}$.
\item for each place $x\in \tilde{S}_l$, $\pi_x$ is a principal series
 $\nind_{B_{n}(M_x)}^{\GL_{n}(M_x)}(\chi_{1},\dots,\chi_{n})$
        with  $v_l(\iota^{-1}\chi_i(l))=[F'_x:\Q_l]\left(i-1+\frac{1-n}{2}\right)$ and
        $\iota^{-1}\chi_{i}\circ\Art_{M_x}^{-1}|_{I_x}=\omega^{(i-1)(2-k)}$ 
 with $k>2$.
\end{itemize}

\subsection{}Choose a division algebra $B$ with centre $M$ such that
\begin{itemize}
\item $B$ splits at all places not dividing a place in $S(B)$.
\item If $w$ is a place of $M$ lying over a place in $S(B)$, then $B_w$ is a division algebra.
\item $\dim_MB=n^2$.
\item $B^{op}\cong B\otimes_{M,c}M$.
\end{itemize}

For any involution $\ddag$ on $B$ with $\ddag|_M=c$, we may define a reductive algebraic group $G_{\ddag}/F$ by $$G_{\ddag}(R)=\{g\in B\otimes_{F}R: g^{\ddag\otimes 1}g=1\}$$ for any $F$-algebra $R$. Because $[F:\Q]$ is divisible by $4$ and $\# S(B)$ is even, we may (by the argument used to prove Lemma 1.7.1 of \cite{ht}) choose $\ddag$ such that
\begin{itemize}
\item If $v\notin S(B)$ is a finite place of $F$ then $G_{\ddag}(F_v)$ is quasi-split, and
\item If $v|\infty$, $G_\ddag(F_v)\cong U(n)$.
\end{itemize}
Fix such a choice of $\ddag$, and write $G$ for $G_{\ddag}$. We wish to work with algebraic modular forms on $G$; in order to do so, we need an integral model for $G$. We obtain such a model by fixing an order $\bigO_B$ in $B$ such that $\bigO_B^{\ddag}=\bigO_B$ and $\bigO_{B,w}$ is a maximal order for all primes $w$ which are split over $M$ (see section 3.3 of \cite{cht} for a proof that such an order exists). We now regard $G$ as an algebraic group over $\bigO_{F}$, by defining $$G(R)=\{g\in \bigO_B\otimes_{\bigO_{F}}R: g^{\ddag\otimes 1} g=1\}$$ for all $\bigO_{F}$-algebras $R$. 

We may  identify $G$ with $\GL_n$ at places not in $S(B)$ which split in $M$ in the following way. Let $v\notin S(B)$ be a place of $F$ which splits in $M$. Choose an isomorphism $i_v:\bigO_{B,v}\isoto M_n(\bigO_{M_v})$ such that $i_v(x^{\ddag})={}^ti_v(x)^c$ (where ${}^t$ denotes matrix transposition). Choosing a prime $w|v$ of $M$ gives an isomorphism
\begin{align*}
  i_w:G(F_v)&\isoto\GL_n(M_w)\\
i_v^{-1}(x,{}^tx^{-c})&\mapsto x.
\end{align*}This identification satisfies $i_wG(\bigO_{F,v})=\GL_n(\bigO_{M,w})$. Similarly, if $v\in S(B)$ then $v$ splits in $M$, and if $w|v$ then we obtain an isomorphism $$i_w:G(F_v)\isoto B_w^\times$$ with $i_wG(\bigO_{F,v})=\bigO_{B,w}^\times$.

Now let $K=\Qlbar$. Write $\bigO$ for the ring of
integers of $K$, and $k$ for the residue field $\Flbar$.

Let $I_l=\Hom(F,K)$, and let $\tilde{I}_l$ be the subset of elements
of $\Hom(M,K)$ such that the induced place of $M$ is in
$\tilde{S}_l$. Let $a\in(\Z^n)^{\Hom(M,K)}$; we assume that
\begin{itemize}
\item $a_{\tau,1}\geq\dots\geq a_{\tau,n}\geq 0$ if $\tau\in\tilde{I}_l$, and
\item $a_{\tau c,i}=-a_{\tau,n+1-i}$.
\end{itemize}
Consider the constructions of section \ref{chenevier section} applied
to our choice of $K$. Then we have an
$\bigO$-module $$Y_a=\otimes_{\tau\in\tilde{I}_l}X_{a_\tau}$$which
has a natural action of $G(\bigO_{F,l})$, where $g\in G(\bigO_{F,l})$
acts on $X_{a_\tau}$ by $\tau(i_\tau g_\tau)$. From now on, if $v|l$
is a place of $F$, we will identify $G(\bigO_{F_v})$ with
$\GL_n(\bigO_{M_{\tilde{v}}})$ via the map $i_{\tilde{v}}$ without
comment.

We say that an open compact subgroup $U\subset G(A_F^\infty)$ is
sufficiently small if for some place $v$ of $F$ the projection of $U$
to $G(F_v)$ contains no nontrivial elements of finite order. Assume
from now on that $U$ is sufficiently small, and in addition that we
may write $U=\prod_v U_v$, $U_v\subset G(\bigO_{F_v})$, such that
\begin{itemize}
\item if $v\in S(B)$ and $w|v$ is a place of $M$, then $i_w(U_v)=\bigO_{B,w}^\times$, and
\item if $v|l$ then $U_v$ is the Iwahori subgroup of  matrices which
  are upper-triangular mod $l$.
\end{itemize}
If $v|l$, let $U'_v$ denote the pro-$l$ subgroup of $U_v$ corresponding to the group of matrices which are (upper-triangular) unipotent mod $l$, and let $$\chi_v:U_v/U_v'\to\bigO^\times$$ be a character. Let $\chi=\otimes\chi_v:\prod_{v|l}U_v\to\bigO^\times$, and write $$Y_{a,\chi}=Y_a\otimes_\bigO\chi,$$a $\prod_{v|l}U_v$-module.

Let $A$ be an $\bigO$-algebra. Then we define the space of algebraic
modular forms $$S_{a,\chi}(U,A)$$ to be the space of
functions $$f:G(F)\backslash G(\A_{F}^\infty)\to A\otimes_\bigO Y_{a,\chi}$$
satisfying $$f(gu)=u^{-1}f(g)$$ for all $u\in U$, $g\in
G(\A_{F}^\infty)$, where the action of $U$ on $A\otimes_\bigO Y_{a,\chi}$ is
inherited from the action of $\prod_{v|l}U_v$ on $Y_{a,\chi}$. Note that
because $U$ is sufficiently small we
have $$S_{a,\chi}(U,A)=S_{a,\chi}(U,\bigO)\otimes_\bigO A.$$

More generally, if $V$ is any $U''$-module with $U''$ a sufficiently small compact open subgroup, we define the space of algebraic modular forms $$S(U'',V)$$ to be the space of functions $$f:G(F)\backslash G(\A_{F}^\infty)\to V$$
satisfying $$f(gu)=u^{-1}f(g)$$ for all $u\in U''$, $g\in
G(\A_{F}^\infty)$.

Let $T^+_l$ denote the monoid of  elements of $G(\A_F^\infty)$ which are trivial
outside of places dividing $l$, and at places dividing $l$ correspond
to matrices $\diag(l^{b_1},\dots,l^{b_n})$ with $b_1\geq\dots\geq
b_n\geq 0$. In addition to the action of $U$ on $Y_{a,\chi}$, we can also allow $T^+_l$ to act. We define the action of $T^+_l$ via the action
$\cdot{}_{twist}$ on $X_t$ defined above. This gives us an action of the
monoid $\langle U,T^+_l\rangle$ on $Y_{a,\chi}$. Now suppose that $g$ is an
element of $G(\A_F^\infty)$ with either $g_l\in G(\bigO_{F,l})$ or
$g\in T^+_l$; then we write $$UgU=\coprod_i g_i U,$$a finite union of
cosets, and define a linear map $$[UgU]:S_{a,\chi}(U,A)\to S_{a,\chi}(U,A)$$
by $$([UgU]f)(h)=\sum_i g_if(hg_i).$$ 

We now introduce some notation for Hecke algebras. Let $v$ be a place of $F$ which splits in $M$, and suppose that $v\notin S(B)$ and that $U_v=G(\bigO_{F_v})$ (so, in particular $v\nmid l$). Suppose that $w|v$ is a place of $M$, so that we may regard $G(\bigO_{F_v})$ as $\GL_n(\bigO_{M_w})$ via $i_w$. Then we let $T_w^{(j)}$, $1\leq j\leq n$ denote the Hecke operator given by $$[U\diag(\varpi_w,\dots,\varpi_w,1,\dots,1)U]$$ where $\varpi_w$ is a uniformiser of $M_w$, and there are $j$ occurrences of it in this matrix. We let $\mathbb{T}_{a,\chi}(U,A)$ denote the commutative $A$-subalgebra of $\End(S_{a,\chi}(U,A))$ generated by the operators $T_w^{(j)}$ and $(T^{(n)}_w)^{-1}$ for all $w$, $j$ as above. Note that $\mathbb{T}_{a,\chi}(U,A)$ commutes with $[UgU]$ for all $g\in T^+_l$. More generally, let $\mathbb{T}(U)$ denote the polynomial ring over $\bigO$ in the formal variables $T_w^{(j)}$ and $(T_w^{n})^{-1}$, which we may think of as acting on $S_{a,\chi}(U,A)$ via the obvious map $\mathbb{T}(U)\to\mathbb{T}_{a,\chi}(U,A)$.

We also wish to consider the Hecke operator $U_l=[UuU]$, where $u\in T^+_l$ has $u_v=\diag(l^{n-1},\dots,l,1)$ for each $v|l$. As usual, we can define a Hida idempotent $$e_l=\lim_{n\to\infty}U_l^{n!},$$which has the property that $U_l$ is invertible on $e_lS_{a,\chi}(U,\bigO)$ and is topologically nilpotent on $(1-e_l)S_{a,\chi}(U,\bigO)$. We write $$S_{a,\chi}^{ord}(U,A):=e_l S_{a,\chi}(U,A).$$ 

Let $a\in(\Z^n)^{\Hom(M,K)}$ be a weight, and let $\chi_a=\otimes_{v|l}\chi_{a,v}$, where
  $\chi_{a,v}:U_v/U'_v\cong((\bigO_{M_v}/\mathfrak{m}_{M_v})^\times)^n\to\bigO^\times$ is given by the character $(x_1,\dots,x_n)\mapsto\prod_{{\tau}}\prod_i
  {\tau}(\tilde{x}_i)^{a_{\tilde{v},n+1-i}}$, where $\tilde{x}_i$ is the Teichm\"uller lift of $x_i$, and the product is over the embeddings ${\tau}\in \tilde{I}_l$ which give rise to $v$.

The main lemma we require is the following.

\begin{lemma}\label{lem:hida theory}Let $a$ be a weight. Then there is a
  $\mathbb{T}(U)$-equivariant isomorphism $$S_{a,\chi}^{ord}(U,k)\to
  S_{0,\chi\chi_a}^{ord}(U,k).$$
\end{lemma}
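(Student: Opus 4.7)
The plan is to construct a surjection $p\colon Y_{a,\chi}\otimes k\twoheadrightarrow k_{\chi\chi_a}$ of $\langle U, T^+_l\rangle$-modules whose kernel is annihilated by the Hida idempotent, and then take $S(U,-)$.

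Explicitly, for each $v\mid l$, I would project $X_{a_{\tilde v}}\otimes k$ onto the lowest weight line, namely the span of the unique monomial in the $Y_{i,j}$ of right weight $(a_{\tilde v,n},\ldots,a_{\tilde v,1})$. The complementary sum of the remaining weight spaces is stable under the upper triangular Borel $B(k)$: the torus preserves each weight space, and adding a positive root to any $\mu\neq w_0(a_{\tilde v})$ occurring in the representation cannot produce $w_0(a_{\tilde v})$, since $w_0(a_{\tilde v})$ is the lowest weight. Hence the projection is $B(k)$-equivariant, with the quotient $k$ carrying the character $\diag(x_1,\ldots,x_n)\mapsto\prod_i x_i^{a_{\tilde v,n+1-i}}$, which matches exactly the local factor of $\chi_a$. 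Tensoring over $v\mid l$ and twisting by $\chi$ yields a $U$-equivariant surjection $p\colon Y_{a,\chi}\otimes k\twoheadrightarrow k_{\chi\chi_a}$. Moreover, the very normalisation defining $\cdot{}_{twist}$ forces every $\alpha\in T^+_l$ to act as the identity on the lowest weight vector, so $p$ is equivariant for the full monoid $\langle U, T^+_l\rangle$ once we declare $T^+_l$ to act trivially on $k_{\chi\chi_a}$.

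The crucial input is Lemma~\ref{lem:lowest weight vector divisibility by l} applied to $u=\diag(l^{n-1},\ldots,l,1)$ at each $v\mid l$: since the exponents are strictly decreasing, $u\cdot{}_{twist}$ carries every non-lowest-weight monomial into $l\cdot X_{a_{\tilde v}}$, and hence vanishes on $K:=\ker p$ after reduction mod $l$. Writing $UuU=\sqcup_i n_iuU$ with $n_i\in U$, the coefficient action of each $u_i=n_iu$ factors as $n_i\circ(u\cdot{}_{twist})$, which kills $K$; therefore the Hecke operator $U_l$ acts as zero on $S(U,K)$, so $e_l$ annihilates it.

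Because $U$ is sufficiently small, the functor $S(U,-)$ is exact (it is a finite direct sum of copies of the coefficient module). Applying it to the short exact sequence $0\to K\to Y_{a,\chi}\otimes k\to k_{\chi\chi_a}\to 0$ yields a short exact sequence of $\mathbb{T}(U)$-modules, and applying $e_l$ produces the desired isomorphism
\[
S_{a,\chi}^{ord}(U,k)\isoto S_{0,\chi\chi_a}^{ord}(U,k).
\]
Equivariance for $\mathbb{T}(U)$ is automatic: the generators $T_w^{(j)}$ have $w\nmid l$, so their defining cosets are trivial at $l$ and act as the identity on coefficients, commuting with $p$. The only point requiring real care is matching the $B(k)$-character on the lowest weight quotient with the character $\chi_a$ defined in the paper; this is the exponent check carried out in the second paragraph.
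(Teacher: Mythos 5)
Your proof is correct, and it rests on exactly the same two ingredients as the paper's: the projection onto the lowest-weight line (mod $l$), and Lemma~\ref{lem:lowest weight vector divisibility by l} applied to $u=\diag(l^{n-1},\ldots,l,1)$. The difference is purely organizational. The paper packages the argument as a six-term diagram of spaces of algebraic modular forms, built from $j$, $u\cdot{}_{twist}$, $j^{-1}$, the inclusion $i$, and $\cor$, and verifies that the two "round-trip" compositions through $j$ are both equal to $U_l$; since $U_l$ is invertible on the ordinary part, $j$ has a two-sided inverse there. You instead isolate the kernel $K$ of the lowest-weight projection on coefficients, observe that $S(U,-)$ is exact because $U$ is sufficiently small, and show directly that $U_l$ annihilates $S(U,K)$ (because every coset representative in $UuU$ factors as $n_i\circ(u\cdot{}_{twist})$ on coefficients and $u\cdot{}_{twist}$ kills $K$ mod $l$), so that $e_l$ kills $S(U,K)$ and the ordinary projection of $j$ is an isomorphism on both sides of the short exact sequence. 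Your version is arguably the cleaner way to say the same thing: "the kernel of $j$ is non-ordinary" is precisely the content encoded by the paper's observation that $j$ restricts to an isomorphism on $u\cdot{}_{twist}Y_{a,\chi}\otimes_\bigO k$, and you avoid having to introduce $S_{a,\chi}(U\cap uUu^{-1},k)$ and the corestriction map at all. One small bookkeeping remark: the coefficient module is a tensor product over embeddings $\tau\in\tilde I_l$, not over places $v\mid l$, so the per-place projections you describe should really be per-embedding projections, grouped by the place they induce; this is what makes the quotient carry $\chi_{a,v}$ exactly as defined. This is cosmetic and does not affect the argument.
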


\begin{proof}
  Note firstly that there is a natural projection map $j$ from
  $Y_{a,\chi}$ to the $\bigO$-module given by the tensor product $Z_{a,\chi}$ of the lowest weight
  vectors. This is a map of
  $\prod_{v|l}U_v$-modules, and by Lemma \ref{lem:lowest weight
    vector divisibility by l} we see that $j$ induces an
  isomorphism $$u\cdot{}_{twist}Y_{a,\chi}\otimes_\bigO k\to
  u\cdot{}_{twist}Z_{a,\chi}\otimes_\bigO k.$$ Note also that by definition we have an
  isomorphism of $\langle U,T_l^+\rangle$-modules $Z_{a,\chi}\to
  Y_{0,\chi\chi_a}$. It thus suffices to prove that the induced map $$j:S_{a,\chi}^{ord}(U,k)\to S^{ord}(U,Z_{a,\chi}\otimes_\bigO k)\text{ }(=S^{ord}_{0,\chi\chi_a}(U,k))$$ is an isomorphism.

  We claim that there is a diagram

$$\xymatrix{S_{a,\chi}(U,k)\ar[r]^j &S(U,Z_{a,\chi}\otimes_\bigO k)\ar[r]^<<<<<{u\cdot{}_{twist}}&  S(U\cap u U u^{-1},u\cdot{}_{twist}Z_{a,\chi}\otimes_\bigO k)\ar[d]^{j^{-1}}\\&\ar[ul]^{\cor} S_{a,\chi}(U\cap u U u^{-1},k)&\ar[l]_>>>>>i S(U\cap u U u^{-1}, u\cdot_{twist}Y_{a,\chi}\otimes_\bigO k)} $$ such that the maps $$\cor\circ i\circ j^{-1}\circ u\cdot{}_{twist}\circ j:S_{a,\chi}(U,k)\to S_{a,\chi}(U,k)$$ and $$j\circ \cor\circ i\circ j^{-1}\circ u\cdot{}_{twist}:S(U,Z_{a,\chi}\otimes_\bigO k)\to S(U,Z_{a,\chi}\otimes_\bigO k)$$ are both given by $U_l$. Since $U_l$ is an isomorphism on $S_{a,\chi}^{ord}(U,k)$, the result will follow.

  In fact, the construction of the diagram is rather straightforward. The maps $j$, $j^{-1}$ are just the natural maps on the coefficients (note that both are maps of $U$-modules). The map $u\cdot{}_{twist}$ is given by $$(u\cdot{}_{twist}f)(h)=u\cdot_{twist}f(hu).$$ The map $i$ is given by the inclusion of $U$-modules $u\cdot{}_{twist}Y_{a,\chi}\otimes_\bigO k\into Y_{a,\chi}\otimes_\bigO k$. Finally, the map $\cor$ is defined in the following fashion. We may write $$U=\coprod u_i(U\cap u U u^{-1}),$$ and we define $$(\cor f)(h)=\sum u_if(hu_i).$$

The claims regarding the compositions of these maps follow immediately from the observation that $$UuU=\coprod u_i u U.$$

\end{proof}

\subsection{}We now recall some results on tamely ramified principal
series representations of $\GL_n$ from \cite{MR1621409}. Let $L$ be a
finite extension of $\Qp$ for some $p$, and let $\pi_L$ be an
irreducible smooth complex representation of $\GL_n(L)$. Let $I$
denote the Iwahori subgroup of $\GL_n(\bigO_L)$ consisting of matrices
which are
upper-triangular mod $\mathfrak{m}_L$, and let $I_1$ denote its Sylow pro-$l$
subgroup. Let $l$ be the residue field of $L$, and let $\varpi_L$
denote a uniformiser of $L$. Then there is a natural isomorphism
$I/I_1\cong(l^\times)^n$. If $\chi=(\chi_1,\dots,\chi_n):
(l^\times)^n\to\C^\times$ is a character, then we let $\pi_L^{I,\chi}$
denote the space of vectors in $\pi_L$ which are fixed by $I_1$ and
transform by $\chi$ under the action of $I/I_1$. The space
$\pi_L^{I,\chi}$ has a natural action of the Hecke algebra
$\mathcal{H}(I,\chi)$ of compactly supported $\chi^{-1}$-spherical
functions on $\GL_n(L)$. We consider the commutative subalgebra
$\mathbb{T}(I,\chi)$ of $\mathcal{H}(I,\chi)$ generated by double
cosets $[I\alpha I]$ where
$\alpha=\diag(\varpi_L^{b_1},\dots,\varpi_L^{b_n})$ with
$b_1\geq\dots\geq b_n\geq 0$.

If $\chi:(\bigO_L^\times)^n\to\C^\times$ is tamely ramified, then we let $\pi_L^{I,\chi}$ denote $\pi_L^{I,\bar{\chi}}$, where $\bar{\chi}$ is the character $(l^\times)^n\to\C^\times$ determined by $\chi$. Let $\delta$ denote the modulus character of $\GL_n(L)$, so that $$\delta(\diag(a_1,\dots,a_n))=|a_1|^{n-1}|a_2|^{n-3}\dots|a_n|^{1-n}$$ where $|\cdot|$ denotes the usual norm on $L$.

\begin{proposition}\label{facts about principal series}
  \begin{enumerate}
  \item If $\pi_L^I\neq 0$ then $\pi$ is a subquotient of an unramified principal series representation.
  \item If $\pi_L^{I_1}\neq 0$ then $\pi$ is a subquotient of a tamely ramified principal series representation. More precisely, if $\pi_L^{I,\chi}\neq 0$ then $\pi_L$ is a subquotient of a tamely ramified principal series representation $\nind_{B_n(L)}^{\GL_n(L)}(\chi'_1,\dots,\chi'_n)$ with $\chi_i'$ extending $\chi_i$ for each $i$.
  \item If $\pi_L=\nind_{B_n(L)}^{\GL_n(L)}(\chi)$ with $\chi$ tamely ramified, then $$\pi_L^{I,\chi}\cong\oplus_w\chi\delta^{-1/2}$$ as a  $\mathbb{T}(I,\chi)$-module, where the sum is over the elements $w$ of the Weyl group of $\GL_n$ with $\chi^w=\chi$; that is, the double coset $[I\alpha I]$ acts via $(\chi\delta^{-1/2})(\alpha)$ on $\pi_L^{I,\chi}$.
  \end{enumerate}
\end{proposition}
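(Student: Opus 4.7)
The plan is to deduce all three statements from Casselman's identification of Iwahori-level invariants with Jacquet modules, combined with the geometric lemma for Jacquet modules of induced representations. Let $N$ denote the upper-triangular unipotent radical of $B_n$, let $\bar{N}$ denote the opposite unipotent radical, and let $T_1 \subset T(\bigO_L)$ denote the pro-$p$ Sylow subgroup (where $p$ is the residual characteristic of $L$), so that $T(\bigO_L)/T_1 \cong (l^\times)^n$ is where $\chi$ lives. I would first invoke the Iwahori decomposition $I = (I \cap \bar{N}) \cdot T(\bigO_L) \cdot (I \cap N)$ together with Jacquet's averaging argument to produce, for any smooth admissible representation $\pi$ of $\GL_n(L)$, a canonical isomorphism $\pi^{I_1} \isoto (\pi_N)^{T_1}$ of $T(\bigO_L)/T_1$-modules, where $\pi_N$ denotes the normalized Jacquet module. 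Taking $\chi$-isotypic components yields $\pi^{I,\chi} \cong (\pi_N)^{T(\bigO_L), \chi}$, and this isomorphism intertwines the Hecke action of $[I\alpha I]$ for dominant $\alpha \in T(L)$ with multiplication by $\delta^{-1/2}(\alpha)$ on the right (Bernstein's relation).

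Parts (1) and (2) then follow almost formally. If $\pi^{I,\chi} \neq 0$ (with $\chi$ trivial in case (1)), then $(\pi_N)^{T(\bigO_L), \chi} \neq 0$; since $\pi_N$ is a smooth admissible finitely generated $T(L)$-module, it admits an irreducible subquotient $\mu = (\chi'_1, \ldots, \chi'_n)$ whose restriction to $T(\bigO_L)$ contains $\chi$. By second adjointness between $r_N$ and normalized parabolic induction, $\pi$ is a subquotient of $\nind_{B_n(L)}^{\GL_n(L)}(\mu)$, which is unramified in case (1) and has $\chi'_i|_{\bigO_L^\times} = \chi_i$ in case (2).

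For part (3), I would use the Bruhat decomposition $\GL_n(L) = \coprod_{w \in W} B_n(L) w B_n(L)$ to produce a $B_n(L)$-stable filtration of $\nind(\chi)$ whose associated graded pieces, after applying the normalized Jacquet functor, are isomorphic to the characters $\chi^w$ of $T(L)$ for $w \in W$ (this is the geometric lemma applied to a maximal parabolic). Because $\chi$ is tamely ramified, its $W$-orbit characters are distinguished on $T(\bigO_L)$ by the relation $\chi^w|_{T(\bigO_L)} = \chi|_{T(\bigO_L)} \iff \chi^w = \chi$, and so the $\chi|_{T(\bigO_L)}$-isotypic component of $(\nind(\chi))_N$ splits canonically as $\bigoplus_{w \in W,\, \chi^w = \chi} \chi$. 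Pulling this back through Casselman's isomorphism and applying the Hecke compatibility yields the asserted decomposition $\pi_L^{I, \chi} \cong \bigoplus_w \chi \delta^{-1/2}$, with the double coset $[I\alpha I]$ acting by $(\chi \delta^{-1/2})(\alpha)$ on each summand.

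The step I expect to be the main obstacle is the precise Hecke compatibility of Casselman's isomorphism for all dominant $\alpha$ (rather than only strictly dominant $\alpha$, where the identification is essentially tautological), together with the verification that the Bruhat filtration on $(\nind(\chi))_N$ actually splits on the relevant isotypic component under the tame ramification hypothesis. Both points are worked out in detail in \cite{MR1621409}, so the cleanest route is to quote the relevant propositions from that paper directly; once those inputs are in place, the remaining work reduces to the combinatorics of the $W$-action on tame characters of $T(\bigO_L)$.
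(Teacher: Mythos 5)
Your approach — Casselman's isomorphism $\pi^{I_1}\isoto(\pi_N)^{T_1}$, its Hecke compatibility, and the Bruhat filtration of the Jacquet module of a principal series — is exactly the content of the sources the paper quotes (Theorem 7.7 and Remark 7.8 of \cite{MR1621409}, together with Theorem 6.3.5 of \cite{Casselmannotes}), and you correctly identify that the cleanest route is to cite those directly, which is precisely what the paper does. Your treatment of parts (1) and (2) is sound.

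Your argument for part (3), however, contains a concrete error. You claim that because $\chi$ is tamely ramified, $\chi^w|_{T(\bigO_L)}=\chi|_{T(\bigO_L)}$ if and only if $\chi^w=\chi$. The forward implication is false: restriction to $T(\bigO_L)$ forgets the values of the $\chi_i$ on a uniformiser, so if $\chi_i$ and $\chi_j$ agree on $\bigO_L^\times$ but differ by a nontrivial unramified twist, the transposition $(ij)$ fixes $\bar\chi$ but not $\chi$. (The trivial tame case $\chi$ unramified is the extreme example: every $w$ fixes $\bar\chi$, while generically only $w=1$ fixes $\chi$, and indeed $\dim\pi_L^I=n!$, not $|\{w:\chi^w=\chi\}|$.) In such a situation the Bruhat filtration contributes to the $\bar\chi$-isotypic component a graded piece on which $T(L)$ acts via $\chi^w\neq\chi$, so $[I\alpha I]$ acts on it by $(\chi^w\delta^{-1/2})(\alpha)$, not $(\chi\delta^{-1/2})(\alpha)$, and the asserted decomposition into copies of $\chi\delta^{-1/2}$ fails; your claim that the filtration "splits canonically" on the isotypic piece would also need justification when several graded pieces carry the same $\bar\chi$. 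To be fair, this imprecision is present in the statement of the proposition itself — as written it is only correct when $\{w:\chi^w=\chi\}=\{w:\bar\chi^w=\bar\chi\}$ — but in the only place the paper invokes part (3), namely Lemma \ref{moving to unitary group}, the tame parts $\iota^{-1}\chi_{x,i}\circ\Art_{F_x}^{-1}|_{I_x}=\omega^{(i-1)(2-k)}$ are pairwise distinct (this uses $k>2$ and $l$ large), so $\bar\chi$ is $W$-regular, the stabiliser is trivial, and the statement reduces to a one-dimensional and uncontroversial case. If you want your sketch to stand alone, you should either impose $W$-regularity of $\bar\chi$ as a hypothesis or replace the index set by $\{w:\bar\chi^w=\bar\chi\}$ and record the eigenvalue $(\chi^w\delta^{-1/2})(\alpha)$ on the $w$-th summand.
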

\begin{proof}
  The first two parts follow from Lemma 3.1.6 of \cite{cht} and its proof. All three parts follow at once from Theorem 7.7 and Remark 7.8 of \cite{MR1621409} (which are valid for $\GL_n$ without any restrictions on $L$ - see the proof of Lemma 3.1.6 of \cite{cht}), together with the standard calculation of the Jacquet module of a principal series representation, for which see for example Theorem 6.3.5 of \cite{Casselmannotes} (although note that there is a missing factor of $\delta^{1/2}$ (or rather $\delta_\Omega^{1/2}$ in the notation of \emph{loc. cit.}) in the formula given there). 
\end{proof}

\subsection{} Keep our running assumptions on $\pi$. Suppose that $U=\prod_v U_v$ is
a sufficiently small subgroup of $G(\A_{F})$. Assume further that $U$
has been chosen such that if
$v\notin S(B)$, $v=ww^c$ splits completely in $M$, and $U_v$ is a
maximal compact subgroup of $G(F_v)$, then $\pi_w$ is
unramified. Recall that we have fixed an isomorphism
$\iota:\Qlbar\isoto\C$. There is a maximal ideal
$\mathfrak{m}_{\iota,\pi}$ of $\mathbb{T}(U)$ determined by $\pi$ in
the following fashion. For each place $v=ww^c$ as above the Hecke operators
$T^{(i)}_w$ act via scalars $\alpha_{w,i}$ on
$(\pi_w)^{\GL_n(\bigO_{M_w})}$. The $\alpha_{w,i}$ are all algebraic
integers, so that
$\iota^{-1}(\alpha_{w,i})\in \bigO$. Then
$\mathfrak{m}_{\iota,\pi}$ is the maximal ideal of $\mathbb{T}(U)$
containing all the $T^{(i)}_w-\iota^{-1}(\alpha_{w,i})$. Let
$\sigma_k\in(\Z^n)^{\Hom(M,K)}$ be the weight determined by
$(\sigma_k)_{\tau,i}=(k-2)(n-i)$ for each $\tau\in\tilde{I}_l$.

\begin{lemma}\label{moving to unitary group}
  Suppose that $\pi$ is a RACSDC representation of $\GL_n(\A_M)$ of
  weight 0 and type $\{\Sp_n(1)\}_{S(B)}$. Suppose that for each place
  $x\in \tilde{S}_l$, $\pi_x$ is a principal series
  $\nind_{B_{n}(M_x)}^{\GL_{n}(M_x)}(\chi_{x,1},\dots,\chi_{x,n})$
  with
  $\iota^{-1}\chi_{x,i}\circ\Art_{F_x}^{-1}|_{I_x}=\omega^{(i-1)(2-k)}$. Then
  there is a sufficiently small compact open subgroup $U$ of
  $G(\A_{F})$ such that $U$ satisfies the requirements above (in
  particular, $U=\prod_v U_v$ where $U_v$ is an Iwahori subgroup of
  $\GL_n(F_v)$ for each $v|l$) and
  $S_{0,\chi_{\sigma_k}}(U,\bigO)_{\mathfrak{m}_{\iota,\pi}}\neq 0$. If we assume
  furthermore that $v_l(\iota^{-1}\chi_{x,i}(l))=[M_x:\Q_l]\left(i-1+\frac{1-n}{2}\right)$ for all
  $i$ (and all $x\in\tilde{S}_l$) then
  $S_{0,\chi_{\sigma_k}}^{ord}(U,\bigO)_{\mathfrak{m}_{\iota,\pi}}\neq 0$.
\end{lemma}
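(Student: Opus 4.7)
The plan is to descend $\pi$ to an automorphic representation $\pi'$ of $G(\A_F)$ and then to single out a specific vector at Iwahori level. By the descent of RACSDC representations to inner forms of unitary groups used throughout \cite{cht}, combined with the Jacquet--Langlands correspondence, $\pi$ arises as the base change of such a $\pi'$: at split places $v \notin S(B)$, $\pi'_v$ corresponds to $\pi_{\tilde{v}}$ via $i_{\tilde{v}}$; at $v \in S(B)$, $\pi'_v$ is the Jacquet--Langlands transfer of the Steinberg representation $\pi_{\tilde{v}}$, hence the trivial character of $\bigO_{B,\tilde{v}}^\times$; and $\pi'_\infty$ is the trivial representation of $G(F_\infty) \cong U(n)^{[F:\Q]}$, consistent with $\pi$ having weight $0$.

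I would then take $U = \prod_v U_v$ with $U_v$ the Iwahori subgroup at each $v \mid l$, $U_v = i_{\tilde{v}}^{-1}(\bigO_{B,\tilde{v}}^\times)$ at $v \in S(B)$, $U_v$ a hyperspecial maximal compact at split places where $\pi'_v$ is unramified, and $U_v$ of sufficiently deep level at each of the finitely many remaining ramified places so that $(\pi'_v)^{U_v} \neq 0$; a further shrinkage of $U$ at one auxiliary unramified split place renders $U$ sufficiently small without killing invariants. By the classical-automorphic dictionary, $S(U,\bigO)_{\mathfrak{m}_{\iota,\pi}} \otimes_\bigO \Qlbar$ then realizes the $U$-fixed vectors in $(\pi')^\infty$ localized at the Hecke eigensystem of $\pi$, and is nonzero. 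To land inside $S_{0,\chi_{\sigma_k}}(U,\bigO)_{\mathfrak{m}_{\iota,\pi}}$, I would invoke Proposition \ref{facts about principal series}(2): the tamely ramified principal series $\pi'_v \cong \pi_{\tilde{v}} = \nind(\chi_{\tilde{v},1}, \dots, \chi_{\tilde{v},n})$ admits a nonzero $(I,\bar{\chi})$-eigenvector, where $\bar{\chi}$ is the reduction mod $l$ of $(\chi_{\tilde{v},1},\dots,\chi_{\tilde{v},n})|_{\bigO^\times}$. Using $\iota^{-1}\chi_{\tilde{v},i}|_{I_{\tilde{v}}} = \omega^{(i-1)(2-k)}$, the identity $\bar{\epsilon}\circ\Art_{M_{\tilde{v}}} = N_{M_{\tilde{v}}/\Q_l}^{-1}$ on units, and the unramifiedness of $l$ in $M$, one verifies that $\bar{\chi}$ matches (up to the coefficient-action convention $f(gu)=u^{-1}f(g)$) the restriction of $\chi_{\sigma_k,v}$ to $(k_{\tilde{v}}^\times)^n$, which by the identity $(\sigma_k)_{\tau,n+1-i}=(k-2)(i-1)$ equals $(x_1,\dots,x_n)\mapsto\prod_i N_{k_{\tilde{v}}/\F_l}(x_i)^{(k-2)(i-1)}$. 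This furnishes the required nonzero element.

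For the ordinarity claim, I would compute the eigenvalue of $U_l=[UuU]$ with $u_v=\diag(l^{n-1},\dots,l,1)$ on this vector. By Proposition \ref{facts about principal series}(3) the Hecke side contributes $(\chi_{\tilde{v}}\delta^{-1/2})(u_v)$, while the $\cdot_{\mathrm{twist}}$ factor on the coefficients is trivial since the weight $a=0$ forces $l^{-\sum b_i a_{n+1-i}}=1$. Using the hypothesis $v_l(\iota^{-1}\chi_{\tilde{v},i}(l))=[M_{\tilde{v}}:\Q_l](i-1+(1-n)/2)$ together with the standard evaluation $v_l(\delta(u_v))=-[M_{\tilde{v}}:\Q_l]\,n(n^2-1)/6$, a direct calculation gives $v_l(\chi_{\tilde{v}}(u_v))=-[M_{\tilde{v}}:\Q_l]\,n(n^2-1)/12$, which cancels with $v_l(\delta^{-1/2}(u_v))=[M_{\tilde{v}}:\Q_l]\,n(n^2-1)/12$; hence $U_l$ acts by a unit and the vector lies in the ordinary part. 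The main technical obstacle is the first step: justifying the precise descent $\pi'\leftrightarrow\pi$ with the asserted local components at $S(B)$ and at the places above $l$ is delicate, but is exactly the transfer set up in Section 3.3 of \cite{cht} under the hypotheses in force.
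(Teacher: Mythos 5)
Your proposal is correct and takes essentially the same route as the paper: descend $\pi$ to the unitary group via Proposition 3.3.2 of \cite{cht} (with the Jacquet--Langlands observation that $\Sp_n(1)$ transfers to the trivial representation of $B_w^\times$ at $S(B)$), use Proposition \ref{facts about principal series} to produce a nonzero Iwahori eigenvector at places above $l$ with the correct tame character, and then verify ordinarity by evaluating the $U_l$-eigenvalue $\chi_x(u)\delta^{-1/2}(u)$ via Proposition \ref{facts about principal series}(3). Your closed-form evaluation $v_l(\chi_{\tilde v}(u_v))=-[M_{\tilde v}:\Q_l]\,n(n^2-1)/12$ and $v_l(\delta^{-1/2}(u_v))=[M_{\tilde v}:\Q_l]\,n(n^2-1)/12$ is a correct repackaging of the paper's termwise sum $\sum_i(n-i)\bigl((2i-1-n)+(n+1-2i)\bigr)=0$, and your aside that the $\cdot_{\mathrm{twist}}$ normalisation is trivial because the coefficient weight is $0$ is right and implicit in the paper.
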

\begin{proof}
  This is a consequence of Proposition 3.3.2 of \cite{cht}. The only
  issues are at places dividing $l$ and places in $S(B)$. For the
  latter, it is enough to note that under the Jacquet-Langlands
  correspondence, $\Sp_n(1)$ corresponds to the trivial
  representation. For the first part, we also need to check that at each  place $x\in\tilde{S}_l$, $\pi_{x}^{I_x,\chi_x}\neq 0$, where $I_x$ is the standard Iwahori subgroup of $\GL_n(M_x)$, and $\chi_x=(\chi_{x,1},\dots,\chi_{x,n})$. This follows at once from Proposition \ref{facts about principal series}. 

  For the second part, we must check in addition that if the Hecke
  operator $[I_xu_xI_x]$ (where $u_x=\diag(l^{n-1},\dots,1)$) acts via
  the scalar $\alpha_x$ on $\pi_{x}^{I_x,\chi_x}$, then
  $\iota^{-1}(\alpha_x)$ is an $l$-adic unit. This is straightforward;
  by Proposition \ref{facts about principal series}(3),
  $\alpha_x=\chi_x(u)\delta^{-1/2}(u)$. Thus
  \begin{align*}v_l(\iota^{-1}(\alpha_x))&=v_l(\iota^{-1}(\chi_x(u)\delta^{-1/2}(u)))\\
    &= \sum_{i=1}^n(n-i)
    v_l(\iota^{-1}\chi_{x,i}(l))+\sum_{i=1}^n(n-i)v_l((l^{-[M_x:\Ql]})^{-(n+1-2i)/2)})
    \\&=\sum_{i=1}^n
    (n-i)([M_x:\Q_l]\left(i-1+\frac{1-n}{2}\right))+\sum_{i=1}^n[M_x:\Ql](n-i)(n+1-2i)/2\\&=\frac{[M_x:\Ql]}{2}\sum_{i=1}^n(n-i)((2i-1-n)+(n+1-2i))\\&=0,
  \end{align*}as required.
\end{proof}

\begin{lemma}\label{lem:existence of the ordinary unramified representation}
Keep (all) the assumptions of Lemma \ref{moving to unitary
  group}. Then there is an RACSDC representation $\pi''$ of
$\GL_n(\A_M)$ of weight $\iota_*\sigma_k$, type
$\{\Sp_n(1)\}_{\{S(B)\}}$ and with $\pi''_l$ unramified such that  $\overline{r}_{l,\iota}(\pi'')\cong\bar{r}_{l,\iota}(\pi)$.
\end{lemma}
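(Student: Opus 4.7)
The plan is to combine Lemma \ref{moving to unitary group} with the Hida-theoretic weight-changing isomorphism of Lemma \ref{lem:hida theory}, and then descend to an automorphic representation via the classical correspondence and Jacquet--Langlands.

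First, Lemma \ref{moving to unitary group} (second part) produces a sufficiently small $U=\prod_v U_v$ of the prescribed form with $S_{0,\chi_{\sigma_k}}^{ord}(U,\bigO)_{\mathfrak{m}_{\iota,\pi}}\neq 0$. Since $U$ is sufficiently small, the space is free over $\bigO$, so reducing mod $l$ we get $S_{0,\chi_{\sigma_k}}^{ord}(U,k)_{\mathfrak{m}_{\iota,\pi}}\neq 0$. Now apply Lemma \ref{lem:hida theory} with $a=\sigma_k$ and $\chi$ the trivial character to obtain a $\mathbb{T}(U)$-equivariant isomorphism $S_{\sigma_k,1}^{ord}(U,k)\isoto S_{0,\chi_{\sigma_k}}^{ord}(U,k)$. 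Hence $S_{\sigma_k,1}^{ord}(U,k)_{\mathfrak{m}_{\iota,\pi}}\neq 0$, and Nakayama gives $S_{\sigma_k,1}^{ord}(U,\bigO)_{\mathfrak{m}_{\iota,\pi}}\neq 0$.

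Tensoring with $\Qlbar$, decomposing into Hecke eigenspaces, and transferring from automorphic forms on $G$ to representations of $\GL_n(\A_M)$ via the classical correspondence and Jacquet--Langlands yields an RACSDC representation $\pi''$ of $\GL_n(\A_M)$ of weight $\iota_*\sigma_k$ whose Hecke eigenvalues at primes $v=ww^c$ split in $M$ with $v\notin S(B)\cup S_l$ agree modulo $l$ with those of $\pi$. At places in $S(B)$, local Jacquet--Langlands (trivial representation $\leftrightarrow\Sp_n(1)$) gives that $\pi''$ has type $\{\Sp_n(1)\}_{S(B)}$. At $x\in\tilde{S}_l$, the fact that $\pi''$ contributes to the ordinary part with trivial character on the Iwahori means $\pi''_x$ has a nonzero Iwahori-fixed vector on which $U_l$ acts by an $l$-adic unit. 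By Proposition \ref{facts about principal series}(1), $\pi''_x$ is a subquotient of some unramified principal series, and the explicit $U_l$-eigenvalue formula of Proposition \ref{facts about principal series}(3), combined with the ordinarity condition and the twist built into $\cdot{}_{twist}$, forces $\pi''_x$ to be the full unramified principal series with the correct ordering of Satake parameters. In particular $\pi''_l$ is unramified. Finally, $\bar{r}_{l,\iota}(\pi'')$ and $\bar{r}_{l,\iota}(\pi)$ are semisimple and have the same characteristic polynomial of Frobenius at the density one set of split primes corresponding to the Hecke operators generating $\mathbb{T}(U)/\mathfrak{m}_{\iota,\pi}$, so Chebotarev gives $\bar{r}_{l,\iota}(\pi'')\cong\bar{r}_{l,\iota}(\pi)$.

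The main technical obstacle is the local analysis at $l$: one must verify that the $U_l$-ordinarity condition rules out non-spherical subquotients of unramified principal series and identifies $\pi''_x$ as genuinely unramified (rather than merely Iwahori-spherical). This is a direct Hecke-algebraic calculation: Proposition \ref{facts about principal series}(3) computes the $U_l$-eigenvalues on each Iwahori-fixed line in terms of the unramified Satake parameters and the modulus character, and comparing valuations (using the normalization provided by $\cdot{}_{twist}$ and the weight $\sigma_k$) shows that only the line inside the full spherical representation can carry a unit eigenvalue.
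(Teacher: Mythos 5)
Your proof follows the paper's structure almost verbatim up to the final local analysis at places above $l$: you use Lemma \ref{moving to unitary group} to get $S_{0,\chi_{\sigma_k}}^{ord}(U,\bigO)_{\mathfrak{m}_{\iota,\pi}}\neq 0$, then Lemma \ref{lem:hida theory} (plus Nakayama) to get $S_{\sigma_k,1}^{ord}(U,\bigO)_{\mathfrak{m}_{\iota,\pi}}\neq 0$, and then Proposition 3.3.2 of \cite{cht} together with Jacquet--Langlands and Chebotarev to produce $\pi''$ of the right weight and type with $\bar{r}_{l,\iota}(\pi'')\cong\bar{r}_{l,\iota}(\pi)$. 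All of that matches the paper.

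The gap is in the final step identifying $\pi''_x$ for $x\in\tilde{S}_l$ as unramified. Proposition \ref{facts about principal series}(1) only tells you $\pi''_x$ is a \emph{subquotient} of an unramified principal series $\nind(\chi_{x,1},\dots,\chi_{x,n})$. You then claim that "comparing valuations shows that only the line inside the full spherical representation can carry a unit eigenvalue," i.e.\ you try to argue that the ordinary Iwahori-fixed line cannot lie in a non-spherical constituent. As stated this is not justified: in a reducible unramified principal series the non-spherical constituents also have Iwahori-fixed vectors, and there is no a priori reason the unit $U_l$-eigenvalue must land in the spherical one without a careful analysis of which $U_l$-eigenlines land in which constituent. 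The paper instead proves something stronger and cleaner: the ordinarity (via the $\cdot_{twist}$ normalization and Proposition \ref{facts about principal series}(3)) pins down all the valuations
\[
v_l(\iota^{-1}\chi_{x,i}(l))=[M_x:\Ql]\bigl((i-1)(k-1)+(1-n)/2\bigr),
\]
so consecutive Satake parameters have valuations differing by $[M_x:\Ql](k-1)\geq 2[M_x:\Ql]$. Reducibility of $\nind(\chi_{x,1},\dots,\chi_{x,n})$ would force some $\chi_{x,i}=\chi_{x,j}|\cdot|$, i.e.\ a valuation difference of exactly $[M_x:\Ql]$, which is impossible precisely because $k>2$. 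Hence the principal series is \emph{irreducible}, so $\pi''_x$ equals the full principal series and is unramified; there are no non-spherical subquotients to rule out. Note that your write-up never invokes the standing hypothesis $k>2$, which is the crucial input at this step --- a sign that the final argument as you have it would not close.
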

\begin{proof}
  This is essentially a consequence of Lemma \ref{moving to unitary
    group}, Lemma \ref{lem:hida theory}, and Proposition \ref{facts
    about principal series}, together with Proposition 3.3.2 of
  \cite{cht}. Indeed, Lemma \ref{lem:hida theory}  and Lemma
  \ref{moving to unitary group} show that
  $S_{\sigma_k,1}^{ord}(U,\bigO)_{\mathfrak{m}_{\iota,\pi}}\neq
  0$, which by Proposition \ref{facts about
    principal series}(1) and Proposition 3.3.2 of \cite{cht} gives us a $\pi''$
  satisfying all the properties we claim, except that we only know that
  for each $x|l$, $\pi''_x$ is a subquotient of an unramified principal
  series representation. We claim that this unramified principal
  series is irreducible, so that $\pi''_x$ is unramified. To see this, note that the fact that we know that $S_{\sigma_k,1}^{ord}(U,\bigO)_{\mathfrak{m}_{\iota,\pi}}\neq 0$ (rather than merely $S_{\sigma_k,1}(U,\bigO)_{\mathfrak{m}_{\iota,\pi}}\neq 0$) means that we can choose $\pi''$ so that for each $x\in\tilde{S}_l$, $\pi''_x$ is a subquotient of an unramified principal series representation $\nind_{B_{n}(M_x)}^{\GL_{n}(M_x)}(\chi_{x,1},\dots,\chi_{x,n})$ with $$v_l(\iota^{-1}\chi_{x,i}(l))=[M_x:\Ql]\left((i-1)(k-1)+(1-n)/2\right)$$(this follows from the comparison of the Hecke actions on $(\pi''_x)^{I_x}$ and  $S_{\sigma_k,1}(U,\bigO)$, noting that the latter action is defined in terms of $\cdot_{twist}$). Now, if the principal series  $\nind_{B_{n}(M_x)}^{\GL_{n}(M_x)}(\chi_{x,1},\dots,\chi_{x,n})$ were reducible, there would be $i$, $j$ with $\chi_{x,i}=\chi_{x,j}|\cdot|$, so that $\chi_{x,i}(l)l^{[M_x:\Ql]}=\chi_{x,j}(l)$, which is a contradiction because $k>2$. The result follows.
\end{proof}

Combining Corollary \ref{cor:potential modularity in weight 0 for f} with Lemma \ref{lem:existence of the ordinary unramified representation}, we obtain

\begin{proposition}\label{modularity of all of them in weight 0}Let
  $l$ be an odd prime, and let $f$ be a modular form of weight $2\leq
  k <l$ and level coprime to $l$. Assume that $f$ is Steinberg at $q$,
  and that for some place $\lambda|l$ of $\bigO_f$, $f$ is ordinary at
  $\lambda$ and $\rhobar_{f,\lambda}$ is absolutely irreducible. Fix
  an embedding $K_{f,\lambda}\into\Qlbar$. Let $\mathcal{N}$ be a
  finite set of even positive integers. Then there is a Galois totally
  real extension $F/\Q$ and a quadratic imaginary field $E$, together
  with a place $w_q|q$ of $M=FE$ such that if we choose a set $\tilde{S}_l$ of
  places of $M$ consisting of one place above each place of $F$
  dividing $l$, and define $\sigma_k\in(\Z^n)^{\Hom(M,K)}$ by $(\sigma_k)_{\tau,i}=(k-2)(n-i)$, then
  \begin{itemize}
  \item for each $n\in\mathcal{N}$, there is a character
    $\bar{\phi}_n:\Gal(\overline{M}/M)\to\Flbar^\times$ which is
    unramified at all places in $\tilde{S}_l$, which satisfies $$\bar{\phi}_n\bar{\phi}_n^c=(\epsilon\det\rhobar_{f,\lambda}\otimes\Flbar)^{1-n}|_{\Gal(\overline{M}/M)}$$
    and
    $(\Sym^{n-1}\rhobar_{f,\lambda}\otimes\Flbar)|_{\Gal(\overline{M}/M)}\otimes\bar{\phi}_n$
    is automorphic of weight $\sigma_k$ and type $\{\Sp_n(1)\}_{\{w_q\}}$.
  \item $l$ is unramified in $M$.
  \item $M$ is linearly disjoint from $\Qbar^{\ker(\rhobar_{f,\lambda})}$ over $\Q$.
  \end{itemize}

\end{proposition}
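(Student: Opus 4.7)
\emph{Proof plan.} The proof is a direct chaining of the two preceding technical results. First, apply Corollary \ref{cor:potential modularity in weight 0 for f} to the given finite set $\mathcal{N}$: this supplies a totally real Galois extension $F'/E$ (where $E$ is as in Theorem \ref{thm:potential modularity: HBAV for rhobar}), a place $w_q|q$ of $F'$, and, for each $n\in\mathcal{N}$, an RAESDC representation $\pi_n$ of $\GL_n(\A_{F'})$ of weight $0$ and type $\{\Sp_n(1)\}_{\{w_q\}}$ whose associated $l$-adic representation is $\rho_n|_{\Gal(\Qbar/F')}$. Moreover $l$ is unramified in $F'$ and $F'$ is linearly disjoint from $\Qbar^{\ker\bar\rho_{f,\lambda}}$ over $\Q$. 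Crucially, the corollary also gives the precise local description of $\pi_{n,x}$ at primes above $l$ (a tamely ramified principal series with prescribed inertia types and prescribed $l$-adic valuations on the $\chi_i(l)$, when $k>2$; unramified when $k=2$) that is required to feed into Lemma \ref{lem:existence of the ordinary unramified representation}.

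Next, carry out the transfer to a unitary group as in the construction preceding Lemma \ref{moving to unitary group}. Choose a quartic totally real extension $F/F'$ (enlarged if necessary so that $F$ is Galois over $\Q$), linearly disjoint from $\Qbar^{\ker\bar\rho_{f,\lambda}}$ over $\Q$, in which $w_q$ and all primes of $F'$ above $l$ split; choose an imaginary quadratic field $E'$ in which $l$ and $q$ split and which is similarly linearly disjoint; and set $M=FE'$. Then $l$ is unramified in $M$ and $M$ is linearly disjoint from $\Qbar^{\ker\bar\rho_{f,\lambda}}$ over $\Q$. For each $n$, form $\pi^\flat_n := (\pi_n)_M \otimes \phi_n$, where $\phi_n$ is the finite-order Hecke character on $M$ supplied by the (unlabelled) lemma immediately preceding Lemma \ref{moving to unitary group}, applied with the character $\chi_{\pi_n}$ satisfying $\pi_n^\vee\cong\chi_{\pi_n}\pi_n$. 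Because $l$ and $w_q$ split in $M/F'$, the local conditions on $\pi_{n,x}$ above $l$ carry over verbatim to $\pi^\flat_n$, so the hypotheses of Lemma \ref{lem:existence of the ordinary unramified representation} are met, and that lemma produces an RACSDC representation $\pi''_n$ of weight $\iota_*\sigma_k$, type $\{\Sp_n(1)\}_{S(B)}$, with $\pi''_{n,l}$ unramified and $\bar r_{l,\iota}(\pi''_n)\cong \bar r_{l,\iota}(\pi^\flat_n)$.

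It remains to identify $\bar r_{l,\iota}(\pi^\flat_n)$ in the form stated. By the construction of $\rho_n$ preceding Corollary \ref{cor:potential modularity in weight 0 for f}, the mod $l$ reduction of $r_{l,\iota}(\pi_n) \cong \rho_n|_{\Gal(\Qbar/F')}$ is $(\Sym^{n-1}\bar\rho_{f,\lambda}\otimes\Flbar)|_{\Gal(\Qbar/F')}$ (the twists by $\bar\theta$ and $\theta^{-1}$ having been arranged to cancel). Taking $\bar\phi_n$ to be the mod $l$ reduction of the Galois character attached to $\phi_n$, one obtains $\bar r_{l,\iota}(\pi''_n) \cong (\Sym^{n-1}\bar\rho_{f,\lambda}\otimes\Flbar)|_{\Gal(\bar M/M)}\otimes\bar\phi_n$, as required. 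Unramifiedness of $\bar\phi_n$ at $\tilde{S}_l$ is built into the construction of $\phi_n$. Finally, the duality $\bar\phi_n\bar\phi_n^c = (\epsilon\det\bar\rho_{f,\lambda})^{1-n}|_{\Gal(\bar M/M)}$ is forced by the defining relation $\phi_n\circ N_{M/F} = \chi_{\pi_n}\circ N_{M/F}$ together with the essential self-duality of $\pi_n$ (which determines $\chi_{\pi_n}$ in terms of $\det r_{l,\iota}(\pi_n)$) and a direct computation of $\det\rho_n$ as a symmetric power of the HBAV cohomology twisted by $\theta^{-1}$. The main (if mild) obstacle is this last determinant bookkeeping; everything substantive has been accomplished in the two cited results.
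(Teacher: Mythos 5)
Your proposal follows the same route as the paper, which simply states that the Proposition follows by ``combining Corollary \ref{cor:potential modularity in weight 0 for f} with Lemma \ref{lem:existence of the ordinary unramified representation}''; you have supplied the glue that the paper elides. The chaining is correct: apply Corollary \ref{cor:potential modularity in weight 0 for f} to get $F'$, $w_q$ and the weight-$0$ RAESDC $\pi_n$ with the prescribed tame local behaviour at $l$; transfer to a CM field $M=FE'$ via the construction preceding Lemma \ref{moving to unitary group}, twisting by the finite-order character $\phi_n$; feed the result into Lemma \ref{lem:existence of the ordinary unramified representation} to change the weight to $\sigma_k$ and remove Iwahori ramification at $l$; and read off the identities for $\bar{r}_{l,\iota}(\pi''_n)$ and $\bar\phi_n\bar\phi_n^c$. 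Your determinant computation giving $\bar{r}_{l,\iota}(\chi_{\pi_n})=(\epsilon\det\bar\rho_{f,\lambda})^{1-n}$ from $\rho_n\cong\Sym^{n-1}$ of a two-dimensional lift of $\bar\rho_{f,\lambda}$ is right, as is the observation that the $\theta$-twists cancel mod $l$.

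Two small caveats, both of which the paper itself leaves implicit. First, Lemma \ref{lem:existence of the ordinary unramified representation} carries the running hypothesis $k>2$; when $k=2$, one should \emph{not} invoke it but instead note that $\sigma_2=0$ and $\pi_n^\flat$ already has $\pi^\flat_{n,l}$ unramified (since $\pi_{n,x}$ is unramified and $\phi_n$ is unramified at $\tilde S_l$), so the weight-$0$ automorphy from the Corollary transfers directly. Your parenthetical ``unramified when $k=2$'' shows awareness of this, but the sentence ``the hypotheses of Lemma \ref{lem:existence of the ordinary unramified representation} are met'' should be qualified accordingly. Second, the chain $\Q\subset E\subset F'\subset F$ only produces Galois extensions at each step, not Galois over $\Q$; your ``enlarged if necessary so that $F$ is Galois over $\Q$'' is the right instinct, but to be fully rigorous one should either arrange this at the Moret-Bailly stage (as Proposition 2.1 of \cite{hsbt} permits, by taking the base there to be $\Q$) or verify that passing to the Galois closure preserves linear disjointness, splitting conditions, and automorphy via a further solvable base change.
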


\section{Potential automorphy}\label{sec:potential automorphy}\subsection{}Assume as before that $f$
is a cuspidal newform of level $\Gamma_1(N)$, weight $k\geq 2$, and
nebentypus $\chi_f$. Let $\pi(f)$ be the RAESDC representation of
$\GL_2(\A_\Q)$ corresponding to $f$. We will think of $\chi_f$ as an
automorphic representation of $\GL_1(\A_\Q)$, and write
$\chi_f=\otimes_p\chi_{f,p}$. We now define what we mean by the claim that the
symmetric powers of $f$ are potentially automorphic. If $F$ is a
totally real field and $v|p$ is a place of $F$, we write
$\rec(\pi_{f,p})|_{F_{v}}$ for the restriction of the Weil-Deligne
representation $\rec(\pi_{f,p})$ to the Weil group of $F_{v}$. Then we
say that $\Sym^{n-1}f$ is potentially automorphic over $F$ if there is
an RAESDC representation $\pi_{n}$ of $\GL_{n}(\A_{F})$ such that for
all primes $p$ and all places $v|p$ of $F$ we
have $$\rec(\pi_{n,v})=\Sym^{n-1}(\rec(\pi_{f,p})|_{F_{v}}). $$ By a
standard argument (see for example section 4 of \cite{hsbt}) this is
equivalent to asking that
$\Sym^{n-1}\rho_{f,\lambda}|_{\Gal(\overline{F}/F)}$ be automorphic for
some place (equivalently for all places)  $\lambda$ of $K_f$.

 Similarly, we may speak of $\Sym^{n-1}f$ being potentially automorphic of a specific weight and type. We then define (for each $n\geq 1$ and each integer $a$) the $L$-series $$L(\chi_f^{a}\otimes\Sym^{n-1}f,s)=\prod_{p}L((\chi_{f,p}^{a}\circ\Art_{\Qp}^{-1})\otimes\Sym^{n-1}\rec(\pi_{f,p}),s+(1-n)/2). $$

We now normalise the $L$-functions of RAESDC automorphic representations to agree with those of their corresponding Galois representations. Specifically, if $\pi$ is an RAESDC representation of $\GL_{n}(\A_{F})$, we define $$L(\pi,s)=\prod_{v\nmid\infty}L(\pi_{v},s+(1-n)/2). $$If $\pi$ is square integrable at some finite place, then for each isomorphism $\iota:\Qbar_{l}\isoto\C$ there is a Galois representation $r_{l,\iota}(\pi)$, and by definition we have \begin{align*}L(\pi,s)&=\prod_{v\nmid\infty}L(\pi_{v}\otimes(|\cdot|\circ\det)^{(1-n)/2},s)\\
&=\prod_{v\nmid\infty}L(\operatorname{rec}(\pi_{v}\otimes(|\cdot|\circ\det)^{(1-n)/2}),s)\\	
&=\prod_{v\nmid\infty}L(r_{l}(\iota^{-1}\pi_{v})^{\vee}(1-n),s)\\
&=L(r_{l,\iota}(\pi),s).
\end{align*}
\begin{thm}\label{thm:actual modularity in the given weights}Suppose that $f$ is a cuspidal newform of level $\Gamma_1(N)$ and weight $k=2$ or $3$. Suppose that $f$ is Steinberg at $q$. Let $\mathcal{N}$ be a finite set of even positive integers. Then there is a Galois totally real field $F$ such that for any $n\in\mathcal{N}$ and any subfield $F'\subset F$ with $F/F'$ soluble, $\Sym^{n-1}f$ is automorphic over $F'$.
\end{thm}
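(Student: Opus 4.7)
The plan is to combine the potential automorphy in weight $\sigma_k$ (Proposition \ref{modularity of all of them in weight 0}) with a suitable ordinary $l$-adic automorphy lifting theorem from \cite{tay06}, followed by descent. First, by Lemma \ref{lem:weight two or three is ordinary} and Lemma \ref{lem:modular form large image}, we may choose an odd prime $l$ with $l > k$ such that $l \nmid N$, $f$ is ordinary at some $\lambda\mid l$ of $\bigO_f$, $\bar\rho_{f,\lambda}$ has large image, and $l$ is large enough for the ``big image'' hypothesis of the lifting theorem to hold for $\Sym^{n-1}\bar\rho_{f,\lambda}$ for each $n \in \mathcal{N}$ (here we use $k \le 3$ to guarantee an infinite supply of ordinary $l$). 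Fix an embedding $K_{f,\lambda} \hookrightarrow \Qlbar$ and an isomorphism $\iota:\Qlbar \isoto \C$.

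Apply Proposition \ref{modularity of all of them in weight 0} with this data to obtain a Galois totally real extension $F/\Q$, an imaginary quadratic $E$, a place $w_q\mid q$ of $M = FE$, and for each $n \in \mathcal{N}$ a character $\bar\phi_n$ of $G_M$ with $\bar\phi_n \bar\phi_n^c = (\epsilon \det\bar\rho_{f,\lambda})^{1-n}|_{G_M}$, such that
\[
  (\Sym^{n-1}\bar\rho_{f,\lambda} \otimes \Flbar)|_{G_M} \otimes \bar\phi_n
\]
is automorphic of weight $\sigma_k$ and type $\{\Sp_n(1)\}_{\{w_q\}}$; moreover $l$ is unramified in $M$ and $M$ is linearly disjoint from $\Qbar^{\ker\bar\rho_{f,\lambda}}$ over $\Q$. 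Lift $\bar\phi_n$ to a de~Rham character $\phi_n : G_M \to \Qlbar^\times$ with $\phi_n \phi_n^c = (\epsilon \det\rho_{f,\lambda})^{1-n}|_{G_M}$ and with Hodge--Tate weights chosen so that $\Sym^{n-1}\rho_{f,\lambda}|_{G_M} \otimes \phi_n$ has the Hodge--Tate weights matching $\sigma_k$; such a lift exists by a standard construction (cf.\ Lemma 4.1.4 of \cite{cht}), and may be arranged to be unramified at all places of $M$ above $l$.

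The $l$-adic representation $R_n := \Sym^{n-1}\rho_{f,\lambda}|_{G_M} \otimes \phi_n$ is then a lift of $(\Sym^{n-1}\bar\rho_{f,\lambda}|_{G_M}) \otimes \bar\phi_n$ with the correct self-duality, with the correct Hodge--Tate weights $\sigma_k$, ordinary at each place of $M$ above $l$ (this is inherited from the ordinarity of $\rho_{f,\lambda}$ by Lemma \ref{lem:existence of ordinary Galois representation}), and unramified outside a finite set of places. The main obstacle is now to verify the hypotheses of the ordinary modularity lifting theorem of \cite{tay06}: the crucial inputs are the ordinarity of the residual representation at $l$ (immediate from the ordinarity of $f$ and the block structure of the symmetric power of an ordinary representation), the ``big image'' condition for $\Sym^{n-1}\bar\rho_{f,\lambda}|_{G_M}$ (which follows from the large image of $\bar\rho_{f,\lambda}$ together with the linear disjointness arranged above, provided $l$ was chosen large enough relative to $n$), and the fact that the residual representation is automorphic of the correct weight and type with ordinary lift, which is precisely what Proposition \ref{modularity of all of them in weight 0} delivers (together with Lemma \ref{lem:existence of the ordinary unramified representation} to upgrade to an ordinary automorphic representation). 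Applying this lifting theorem, $R_n$ itself is automorphic over $M$.

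Finally we descend. Twisting the RACSDC representation over $M$ corresponding to $R_n$ by $\phi_n^{-1}$ and applying the descent from $M$ to $F$ (using that $M/F$ is a CM quadratic extension and the representation satisfies the correct essential self-duality over $F$, via Lemma 4.3.3 of \cite{cht}) produces an RAESDC representation of $\GL_n(\A_F)$ whose associated $l$-adic Galois representation is $\Sym^{n-1}\rho_{f,\lambda}|_{G_F}$. Thus $\Sym^{n-1}f$ is automorphic over $F$. If $F'\subset F$ with $F/F'$ soluble, then solvable base change descent (Lemma~1.3 of \cite{BLGHT}) produces an RAESDC representation of $\GL_n(\A_{F'})$ realising $\Sym^{n-1}f$ over $F'$, giving the theorem. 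Enlarging $F$ if necessary to ensure it works simultaneously for every $n \in \mathcal{N}$ (which is harmless since $\mathcal{N}$ is finite and the construction of $F$ depends only on the data $n \in \mathcal{N}$) completes the argument.
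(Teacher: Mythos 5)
Your proposal is correct and follows essentially the same route as the paper: choose a large ordinary prime $l$ with residual large image (the paper makes the bound $l>\max(2n+1)$ for $n\in\mathcal{N}$ explicit to guarantee the ``big image'' conditions), apply Proposition \ref{modularity of all of them in weight 0}, lift the twisting character $\bar\phi_n$ to a crystalline character $\psi$ with $\psi\psi^c$ matching the self-duality twist and unramified at the places in $\tilde{S}_l$ (the paper cites Lemma 4.1.6 of \cite{cht} for this; you cite 4.1.4, and ``crystalline'' is the more accurate condition than ``de Rham'' for applying Theorem 5.2 of \cite{tay06}), verify the hypotheses of that ordinary automorphy lifting theorem, then descend from $M$ to $F$ via Lemma 4.3.3 of \cite{cht} and from $F$ to soluble subextensions $F'$ (the paper uses Lemma 4.3.2 of \cite{cht} rather than Lemma 1.3 of \cite{BLGHT}, but this is immaterial). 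One small redundancy in your writeup: no enlargement of $F$ is needed at the end, since Proposition \ref{modularity of all of them in weight 0} already supplies a single $F$ valid for all $n\in\mathcal{N}$ simultaneously.
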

\begin{proof}By Lemma \ref{lem:weight two or three is ordinary} and
  Lemma \ref{lem:modular form large image} we may choose a prime $l>3$
  and a place $\lambda$ of $\bigO_{f}$ lying over $l$ such
  that
	
	\begin{itemize}
		\item $l\nmid N$.
		\item $f$ is ordinary at $\lambda$.
		\item $l>\max(2n+1)_{n\in\mathcal{N}}$.
                \item $\rhobar_{f,\lambda}$ has large image.
	\end{itemize}
By Corollary \ref{modularity of all of them in weight 0} there is an
embedding $K_{f,\lambda}\into\Qlbar$, a Galois totally real extension
$F/\Q$ and a quadratic imaginary field $E$, together with a place
$w_q|q$ of $M=FE$ such that if we choose a set $\tilde{S}_l$ of
  places of $M$ consisting of one place above each place of $F$
  dividing $l$, and define $\sigma_k\in(\Z^n)^{\Hom(M,K)}$ by $(\sigma_k)_{\tau,i}=(k-2)(n-i)$, then
  \begin{itemize}
  \item  for each $n\in\mathcal{N}$, there is a character
    $\bar{\phi}_n:\Gal(\overline{M}/M)\to\Flbar^\times$ which is
    unramified at all places in $\tilde{S}_l$ and satisfies $$\bar{\phi}_n\bar{\phi}_n^c=(\epsilon\det\rhobar_{f,\lambda}\otimes\Flbar)^{1-n}|_{\Gal(\overline{M}/M)},$$
    and
    $(\Sym^{n-1}\rhobar_{f,\lambda}\otimes\Flbar)|_{\Gal(\overline{M}/M)}\otimes\bar{\phi}_n$
    is automorphic of weight $\sigma_k$ and type $\{\Sp_n(1)\}_{\{w_q\}}$.
  \item $l$ is unramified in $M$.
  \item $M$ is linearly disjoint from $\Qbar^{\ker(\rhobar_{f,\lambda})}$ over $\Q$.
  \end{itemize} 
  Fix $n\in\mathcal{N}$, and let
  $\rho:=\Sym^{n-1}\rho_{f,\lambda}|_{\Gal(\overline{F}/F)}\otimes\Qlbar$.
  There is a crystalline character
  $\chi:\Gal(\overline{F}/F)\to\bigO_{\Qlbar}^\times$ which is
  unramified above $q$ such
  that $$\rho^\vee\cong\rho\chi\epsilon^{n-1};$$in fact, $$\chi=(\epsilon\det\rho_{f,\lambda}\otimes\bigO_{\Qlbar})^{1-n}|_{\Gal(\overline{F}/F)}.$$ By Lemma 4.1.6 of
  \cite{cht} we can choose an algebraic
  character $$\psi:\Gal(\overline{M}/M)\to\bigO_{\Qlbar}^\times$$ such
  that
\begin{itemize}
\item $\chi|_{\Gal(\overline{M}/M)}=\psi\psi^c$,
\item $\psi$ is crystalline,
\item $\psi$ is unramified at each place in $\tilde{S}_l$.
\item $\psi$ is unramified above $q$,
\item $\bar{\psi}=\bar{\phi}_n$.
\end{itemize}
Then $\rho'=\rho|_{\Gal(\overline{M}/M)}\psi$ satisfies $$(\rho')^c\cong (\rho')^\vee\epsilon^{1-n}.$$ We claim that $\rho'$ is automorphic of weight $\sigma_k$, level prime to $l$ and type $\{\Sp_n(1)\}_{\{w_q\}}$. This follows from Theorem 5.2 of \cite{tay06}; we now check the hypotheses of that theorem. Certainly $\bar{\rho}'\cong(\Sym^{n-1}\rhobar_{f,\lambda}\otimes\Flbar)|_{\Gal(\overline{M}/M)}\otimes\bar{\phi}_n$ is automorphic of weight $\sigma_k$, level prime to $l$ and type $\{\Sp_n(1)\}_{\{w_q\}}$. The only non-trivial conditions to check are that:
\begin{itemize}
\item $\overline{M}^{\ker\ad\rhobar'}$ does not contain $M(\zeta_l)$, and
\item The image $\rhobar'(\Gal(\overline{M}/M(\zeta_l)))$ is big in the sense of Definition 2.5.1 of \cite{cht}.
\end{itemize}These both follow from the assumption that $\rhobar_{f,\lambda}$ has large image, the fact that $M$ is linearly disjoint from $\Qbar^{\ker(\rhobar_{f,\lambda})}$ over $\Q$, Corollary 2.5.4 of \cite{cht}, and the fact that $\PSL_2(k)$ is simple if $k$ is a finite field of cardinality greater than 3.

It follows from Lemma 4.3.3 of \cite{cht} that $\rho$ is
automorphic. Then from Lemma 4.3.2 of \cite{cht} we see that for each
$F'$ with $F/F'$ soluble, $\Sym^{n-1}\rho_{f,\lambda}|_{\Gal(\overline{F}/F')}$ is automorphic, as required.
\end{proof}
\begin{corollary}
  \label{actual modularity in the given weights, including for
    potentialy steinberg}Suppose that $f$ is a cuspidal newform of
  level $\Gamma_1(N)$ and weight $k=2$ or $3$. Suppose that $f$ is potentially Steinberg at $q$. Let $\mathcal{N}$ be a finite set of even positive integers. Then there is a Galois totally real field $F$ such that for any $n\in\mathcal{N}$ and any subfield $F'\subset F$ with $F/F'$ soluble, $\Sym^{n-1}f$ is automorphic over $F'$.
\end{corollary}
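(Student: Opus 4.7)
The plan is to reduce directly to Theorem \ref{thm:actual modularity in the given weights} by a twisting trick. As noted after the definition of potentially Steinberg, there exists a Dirichlet character $\theta$ such that $f \otimes \theta$ is Steinberg at $q$. Concretely, the twist $\pi(f)\otimes\theta$ is cuspidal (being the twist of a cuspidal by a finite-order character), algebraic and regular of the same weight as $\pi(f)$, and essentially self-dual:
\begin{equation*}
(\pi(f)\otimes\theta)^\vee \cong \chi\theta^{-2}\cdot(\pi(f)\otimes\theta),
\end{equation*}
where $\chi = |\cdot|^{k-2}\chi_f^{-1}$. Hence $\pi(f)\otimes\theta$ is RAESDC, and corresponds to a cuspidal newform $g$ of weight $k=2$ or $3$ which is Steinberg at $q$.

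Next, apply Theorem \ref{thm:actual modularity in the given weights} to $g$, with the same finite set $\mathcal{N}$. This produces a Galois totally real field $F/\Q$ such that for every $n\in\mathcal{N}$ and every intermediate field $F'\subset F$ with $F/F'$ soluble, $\Sym^{n-1}g$ is automorphic over $F'$. In particular, there is an RAESDC representation $\pi_{n}$ of $\GL_n(\A_{F'})$ whose associated $l$-adic Galois representation agrees with $\Sym^{n-1}\rho_{g,\lambda}|_{G_{F'}}$ for some (hence every) $\lambda$.

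Finally, untwist. For the Galois representations attached to $f$ and $g$ we have $\rho_{g,\lambda}\cong\rho_{f,\lambda}\otimes\theta$, where $\theta$ is viewed as a Galois character via class field theory, and consequently
\begin{equation*}
\Sym^{n-1}\rho_{g,\lambda}\cong (\Sym^{n-1}\rho_{f,\lambda})\otimes\theta^{n-1}.
\end{equation*}
Let $\Theta$ denote the finite-order algebraic Hecke character $\theta^{-(n-1)}\circ N_{F'/\Q}$ of $\A_{F'}^\times$. Twisting $\pi_n$ by $\Theta$ preserves the RAESDC property (the twisted representation remains cuspidal, algebraic, regular, and essentially self-dual with character $\chi_{\pi_n}\cdot\Theta^{-2}$), and the twisted $l$-adic representation is precisely $\Sym^{n-1}\rho_{f,\lambda}|_{G_{F'}}$. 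By the equivalent characterisation of potential automorphy recalled at the start of \S\ref{sec:potential automorphy}, this shows that $\Sym^{n-1}f$ is automorphic over $F'$, as required.

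There is no substantive obstacle here: once the twisting identity is set up, the entire proof is formal, and the only thing to verify is the compatibility of the twist with essential self-duality and with the $\Sym^{n-1}$ operation. The field $F$ furnished by Theorem \ref{thm:actual modularity in the given weights} applied to $g$ works uniformly for all $n\in\mathcal{N}$ and all soluble subextensions, so no additional enlargement is needed.
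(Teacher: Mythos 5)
Your proof is correct and takes essentially the same approach as the paper: twist by $\theta$ to reduce to the Steinberg case, apply Theorem \ref{thm:actual modularity in the given weights} to the twist, then untwist. The paper's proof of this corollary is extremely terse — it simply says to apply the theorem to $f' = f\otimes\theta$ — and leaves the untwisting entirely implicit; you usefully spell out the identity $\Sym^{n-1}\rho_{g,\lambda}\cong(\Sym^{n-1}\rho_{f,\lambda})\otimes\theta^{n-1}$, the choice of twisting character $\Theta=\theta^{-(n-1)}\circ N_{F'/\Q}$ on $\GL_n(\A_{F'})$, and the stability of the RAESDC property under finite-order twist. One small point you might also mention (the paper is silent on this too): after twisting, $\pi_n\otimes\Theta$ is still square integrable at places above $q$ (a ramified twist of Steinberg is still essentially Steinberg), so it retains a ``type'' in the sense of \S\ref{notation}; this is what guarantees that the associated Galois representation $r_{l,\iota}(\pi_n\otimes\Theta)$ exists and can be compared with $\Sym^{n-1}\rho_{f,\lambda}|_{\Gal(\overline{F}/F')}$. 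With that remark added, the argument is complete.
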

\begin{proof}
  Let $\theta$ be a Dirichlet character such that $f'=f\otimes\theta$ is Steinberg at $q$. The result then follows from Theorem \ref{thm:actual modularity in the given weights} applied to $f'$.
\end{proof}

\section{The Sato-Tate Conjecture}\label{sec:sato tate}
\subsection{}Let $f$ be a cuspidal newform of level $\Gamma_{1}(N)$, nebentypus $\chi_f$, and weight $k\geq 2$. Suppose that $\chi_f$ has order $m$, so that the image of $\chi_f$ is precisely the group $\mu_{m}$ of $m$-th roots of unity. Let $U(2)_{m}$ be the subgroup of $U(2)$ consisting of matrices with determinant in $\mu_{m}$. For each prime $l\nmid N$, if we write $$X^{2}-a_{l}X+l^{k-1}\chi_f(l)=(X-\alpha_{l}l^{(k-1)/2})(X-\beta_{l}l^{{(k-1)/2}}) $$then (by the Ramanujan conjecture) the matrix $$\begin{pmatrix}\alpha_{l} & 0\\ 0& \beta_{l} \end{pmatrix}$$defines a conjugacy class $x_{f,l}$ in $U(2)_{m}$. A natural generalisation of the Sato-Tate conjecture is

\begin{conj}\label{sato tate conjecture}
	If $f$ is not of CM type, then the conjugacy classes $x_{f,l}$ are equidistributed with respect to the Haar measure on $U(2)_{m}$ (normalised so that $U(2)_{m}$ has measure 1).
\end{conj}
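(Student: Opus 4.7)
The plan is to follow the Tate/Harris--Shepherd-Barron--Taylor strategy and deduce equidistribution from analytic properties of symmetric-power $L$-functions via the Wiener--Ikehara Tauberian theorem. By the Peter--Weyl theorem, the irreducible representations of $U(2)_{m}$ are precisely those of the form $\Sym^{n-1}\otimes(\det)^{a}$ for $n\geq 1$ and $a\in\Z/m\Z$; the character of such a representation at $x_{f,p}$ is exactly the local parameter at $p$ of $L(\chi_{f}^{a}\otimes\Sym^{n-1}f,s)$, since $\det x_{f,p}=\chi_{f}(p)$. A standard Weyl-criterion argument, identical to the one in section 4 of \cite{hsbt}, then reduces Conjecture \ref{sato tate conjecture} to the following analytic assertion: for each pair $(n,a)$ corresponding to a non-trivial representation, $L(\chi_{f}^{a}\otimes\Sym^{n-1}f,s)$ admits meromorphic continuation to $\Re(s)\geq 1$, is holomorphic at $s=1$, and is non-vanishing on the line $\Re(s)=1$.

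For even values of $n$ I would invoke Corollary \ref{actual modularity in the given weights, including for potentialy steinberg} directly. Given any finite set $\mathcal{N}$ of even positive integers, that corollary produces a totally real Galois extension $F/\Q$ over which each $\Sym^{n-1}f$ (with $n\in\mathcal{N}$) becomes automorphic, and remains so over every intermediate field $F'$ with $F/F'$ soluble. Brauer's induction theorem then expresses the trivial character of $\Gal(F/\Q)$ as an integral combination of characters induced from soluble subgroups, which translates into an identity expressing $L(\chi_{f}^{a}\otimes\Sym^{n-1}f,s)$ as an integer-exponent product of twisted $L$-functions of cuspidal automorphic representations of $\GL_{n}$ over intermediate fields. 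Meromorphic continuation is then immediate, and the non-vanishing on $\Re(s)=1$ is supplied by Jacquet--Shalika.

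For odd values of $n$ I would appeal to the Rankin--Selberg trick of \cite{hsbt}: the Clebsch--Gordan decomposition $\Sym^{1}\otimes\Sym^{2k}\cong\Sym^{2k+1}\oplus(\Sym^{2k-1}\otimes\det)$ yields, after suitable normalisation, the factorisation
\[
L(s,\chi_{f}^{a}\otimes f\otimes\Sym^{2k}f)=L(s,\chi_{f}^{a}\otimes\Sym^{2k+1}f)\cdot L(s,\chi_{f}^{a+1}\otimes\Sym^{2k-1}f).
\]
The left-hand side is a Rankin--Selberg convolution of the cuspidal $\pi(f)\otimes\chi_{f}^{a}$ against the potentially automorphic $\Sym^{2k}f$; by Jacquet--Piatetski-Shapiro--Shalika, combined with the Brauer argument of the previous paragraph (applied after twisting by $\chi_f^a$, which reduces to the Steinberg case since $f$ is potentially Steinberg at $q$), it enjoys the required analytic properties. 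Together with the classical analytic properties of $L(s,\chi_{f}^{a}\otimes f)$ serving as the base case $k=0$, an induction on $k$ then delivers the desired statement for all odd symmetric powers and all twists in parallel.

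The principal obstacle lies in the simultaneous handling of all twists by $\chi_{f}^{a}$, $a\in\Z/m\Z$. The Rankin--Selberg recursion only closes if we establish the assertion for every $a$ at once, since the two factors on the right-hand side of the identity above involve $\chi_{f}^{a}$ and $\chi_{f}^{a+1}$ respectively; this forces us to apply Corollary \ref{actual modularity in the given weights, including for potentialy steinberg} to a single Galois extension $F/\Q$ large enough to accommodate all finitely many twists of $f$ by powers of $\chi_{f}$, which is legitimate because $\chi_{f}$ has finite order. A secondary concern is that the $L$-functions emerging from Brauer induction correspond to isobaric rather than cuspidal representations over intermediate fields, but the extensions of Jacquet--Shalika and Shahidi non-vanishing results to this setting are standard, and are used in \cite{hsbt} in an identical context.
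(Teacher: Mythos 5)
Your overall strategy matches the paper's: reduce via Serre's equidistribution criterion (equivalently, the Weyl criterion and a Tauberian argument) to analytic properties of $L(\chi_f^a\otimes\Sym^b f,s)$, use Brauer induction together with Corollary \ref{actual modularity in the given weights, including for potentialy steinberg} for the symmetric powers that are directly covered, and use Rankin--Selberg to bridge the gap to the remaining ones. You also correctly flag the need to run the argument for all twists $\chi_f^a$ simultaneously, and the isobaric rather than cuspidal issue; these are genuine points that the paper handles in the same way.

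However, the Rankin--Selberg step as you have written it is inverted, and this is a genuine gap. Corollary \ref{actual modularity in the given weights, including for potentialy steinberg} gives potential automorphy of $\Sym^{n-1}f$ for $n$ \emph{even}, i.e.\ of the \emph{odd} symmetric powers $\Sym^{2k-1}f$. You then write
\[
L(s,\chi_f^a\otimes f\otimes\Sym^{2k}f)=L(s,\chi_f^a\otimes\Sym^{2k+1}f)\cdot L(s,\chi_f^{a+1}\otimes\Sym^{2k-1}f),
\]
and describe the left-hand side as a Rankin--Selberg convolution of $\pi(f)$ against ``the potentially automorphic $\Sym^{2k}f$''. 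But $\Sym^{2k}f$ is an \emph{even} symmetric power — it is exactly one of the objects not covered by the Corollary, so its automorphy cannot be invoked, and the Jacquet--Piatetski-Shapiro--Shalika analysis of the LHS does not apply. Moreover, both factors on your RHS are odd symmetric powers, which are already handled directly; so your induction produces nothing new, and the even symmetric powers — the ones which actually require a Rankin--Selberg argument — are never addressed. The correct identity is the one with the roles reversed: using $\Sym^1\otimes\Sym^{2k-1}\cong\Sym^{2k}\oplus(\Sym^{2k-2}\otimes\det)$, one writes (with $b=2k-1$ odd)
\[
L(\chi_f^a\otimes\Sym^{b+1}f,s)\,L(\chi_f^{a+1}\otimes\Sym^{b-1}f,s-k+1)=L\bigl(\chi_f^a\otimes\Sym^{b}f\times f, s+b(k-1)/2\bigr),
\]
so the convolution on the right is of two representations that \emph{are} known (potentially) automorphic, and the unknown even symmetric power $\Sym^{b+1}f$ sits on the left, isolated by the inductive hypothesis applied to $\Sym^{b-1}f$. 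This is what the paper's proof of Theorem \ref{thm:analytic properties of l-functions} does, arguing by induction on odd $b$ and deducing both $b$ and $b+1$ simultaneously via Brauer induction applied to $\mathcal{N}=\{2,b+1\}$. A secondary slip: your factorisation omits the Tate-twist shift $s-k+1$ in the factor coming from the $\det$ term; you do write ``after suitable normalisation'', but the shift must actually be kept track of, as it moves the critical line of the lower symmetric power.
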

The group $U(2)_{m}$ is compact, and its irreducible representations
are given by $\det^{a}\otimes\Sym^{b}\C^{2}$ for $0\leq a<m$ and
$b\geq 0$. By the corollary to Theorem 2 of section I.A.2 of
\cite{ser67} (noting the different normalisations of $L$-functions in
force there), Conjecture \ref{sato tate conjecture} follows if one knows that for each $b\geq 1$, the functions $L(\chi_f^{a}\otimes\Sym^{b}f,s)$ are holomorphic and non-zero for $\Re s\geq 1+b(k-1)/2$ (the required results for $b=0$ are classical).
\begin{thm}\label{thm:analytic properties of l-functions}Suppose that
  $f$ is a cuspidal newform of level $\Gamma_1(N)$, character $\chi_f$,
  and weight $k=2$ or $3$. Suppose that $\chi_f$ has order $m$. Suppose also that $f$ is potentially Steinberg at $q$ for some prime $q$. Then for all integers $0\leq a<m$, $b\geq 1$ the function $L(\chi_f^{a}\otimes\Sym^{b}f,s)$ has meromorphic continuation to the whole complex plane, satisfies the expected functional equation, and is holomorphic and nonzero in $\Re s\geq 1+b(k-1)/2$.
	
\end{thm}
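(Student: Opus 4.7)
The plan is to adapt the Rankin-Selberg argument of Section~4 of \cite{hsbt} to our situation, using Corollary~\ref{actual modularity in the given weights, including for potentialy steinberg} as input in place of the potential automorphy theorem of \cite{hsbt}. First I would apply that corollary with $\mathcal{N}=\{2,4,\dots\}$ taken large enough to cover the symmetric powers we need (given $b$), obtaining a Galois totally real field $F/\Q$ such that for every odd $b'\le b$ and every subfield $F'\subset F$ with $F/F'$ soluble, $\Sym^{b'} f$ is automorphic over $F'$. Since $2\in\mathcal{N}$, we have in particular that $f$ itself corresponds to an automorphic representation over every such $F'$.

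For odd $b$ I would apply Brauer's theorem directly: writing the trivial character of $\Gal(F/\Q)$ as a $\Z$-linear combination of characters induced from characters of soluble subgroups expresses $L(\chi_f^a\otimes\Sym^b f,s)$ as a finite alternating product of cuspidal automorphic $L$-functions on various $\GL_{b+1}(\A_{F'})$, each twisted by a finite-order Hecke character. Meromorphic continuation to $\C$ and the expected functional equation follow at once, while holomorphy and nonvanishing on $\Re s\ge 1+b(k-1)/2$ follow from the classical theorems of Jacquet--Shalika and Shahidi for cuspidal automorphic $L$-functions on $\GL_n$ over arbitrary number fields.

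For even $b\ge 2$ I would induct on $b$, with base case $b=0$ classical. Using the Clebsch--Gordan decomposition $\Sym^{b-1}\rho_f\otimes\rho_f\cong\Sym^b\rho_f\oplus(\Sym^{b-2}\rho_f\otimes\det\rho_f)$ and $\det\rho_f=\chi_f\epsilon^{k-1}$, one obtains (after accounting for the normalisation shifts exactly as in \cite{hsbt}) an identity of $L$-functions of the shape
\begin{equation*}
L(\chi_f^a\otimes(\Sym^{b-1} f\boxtimes f),s)\;=\;L(\chi_f^a\otimes\Sym^b f,s)\cdot L(\chi_f^{a+1}\otimes\Sym^{b-2} f,s).
\end{equation*}
Over each soluble subfield $F'\subset F$ both $\Sym^{b-1} f$ and $f$ are automorphic by our choice of $F$, so the left-hand side is a Rankin-Selberg $L$-function on $\GL_b\times\GL_2$ over $F'$; Brauer then descends this to $\Q$, and the Rankin-Selberg theory of Jacquet--Piatetski-Shapiro--Shalika and Shahidi supplies meromorphic continuation, functional equation and non-vanishing on the edge of the critical strip. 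Combined with the inductive hypothesis applied to $L(\chi_f^{a+1}\otimes\Sym^{b-2} f,s)$ (reducing $a+1$ modulo $m$ if necessary, since $\chi_f$ has order $m$), solving the identity yields the required properties of $L(\chi_f^a\otimes\Sym^b f,s)$.

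The main difficulty, as always in such arguments, is ensuring that non-vanishing on the line $\Re s=1+b(k-1)/2$ survives the Brauer decomposition (which produces an alternating product) and the division by another $L$-function in the induction step. This is handled exactly as in \cite{hsbt} via Shahidi's non-vanishing theorem on $\Re s=1$, applied separately to each constituent cuspidal or Rankin-Selberg $L$-function appearing in the Brauer expansion. The only additional bookkeeping compared with the elliptic curves case of \cite{hsbt} is the presence of the finite-order twists by powers of $\chi_f$, which propagates harmlessly through the entire argument.
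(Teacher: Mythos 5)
Your proposal follows essentially the same approach as the paper: apply Corollary~\ref{actual modularity in the given weights, including for potentialy steinberg}, use Brauer's theorem to descend to $\Q$, treat the odd symmetric powers directly (since those are the ones the corollary makes potentially automorphic), and handle the even symmetric powers via the Clebsch--Gordan/Rankin--Selberg decomposition together with Shahidi's non-vanishing theorem. The only cosmetic difference is that you organize the induction as two separate cases (odd $b$ directly, even $b$ via $\GL_{b}\times\GL_2$ Rankin--Selberg), whereas the paper runs a single induction step that establishes $b$ odd and $b+1$ simultaneously; these are the same argument rearranged. One small bookkeeping slip: in your displayed $L$-function identity the second factor should carry a shift $s-(k-1)$ coming from $\det\rho_{f,\lambda}=\chi_f\epsilon^{1-k}$, i.e.\ it should read $L(\chi_f^{a+1}\otimes\Sym^{b-2}f,\,s-k+1)$; you flag that such shifts exist but drop them from the formula, so be careful to keep track of them when invoking the non-vanishing results on the correct vertical line.
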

\begin{proof}The argument is very similar to the proof of Theorem 4.2 of \cite{hsbt}. We argue by induction on $b$; suppose that $b$ is odd, and the result is known for all $1\leq b'< b$. We will deduce the result for $b$ and for $b+1$ simultaneously. Apply Corollary \ref{actual modularity in the given weights, including for potentialy steinberg} with $\mathcal{N}=\{2,b+1\}$. Let $F$ be as in the conclusion of Corollary \ref{actual modularity in the given weights, including for potentialy steinberg}. By Brauer's theorem, we may write $$1=\Sigma_{j}a_{j}\Ind_{\Gal(F/F_{j})}^{\Gal(F/\Q)}\chi_{j} $$where $F\supset F_{j}$ with $F/F_{j}$ soluble, $\chi_{j}$ a character $\Gal(F/F_{j})\to\C^{\times}$, and $a_{j}\in\Z$. Then for each $j$, $\Sym^b{f}$ is automorphic over $F_{j}$, corresponding to an RAESDC representation $\pi_{j}$ of $\GL_{b+1}(\A_{F_{j}})$. In addition, $f$ is automorphic over $F_{j}$, corresponding to an RAESDC representation $\sigma_{j}$ of $\GL_{2}(\A_{F_{j}})$.
	
	Then we have $$L(\chi_f^{a}\otimes\Sym^{b}f,s)=\prod_{j}L(\pi_{j}\otimes(\chi_{j}\circ\Art_{F_{j}})\otimes(\chi_f^{a}\circ N_{F_{j}/\Q}),s)^{a_{j}}, $$
	$$L(\chi_f^{a}\otimes\Sym^{2}f,s)=\prod_{j}L((\Sym^{2}\sigma_{j})\otimes(\chi_{j}\circ\Art_{F_{j}})\otimes(\chi_f^{a}\circ
        N_{F_{j}/\Q}),s)^{a_{j}}, $$and $$L(\chi_f^{a}\otimes\Sym^{b+1}f,s)L(\chi_f^{a+1}\otimes\Sym^{b-1}f,s-k+1)=\prod_{j}L((\pi_{j}\otimes(\chi_{j}\circ\Art_{F_{j}})\otimes(\chi_f^{a}\circ
        N_{F_{j}/\Q}))\times \sigma_{j},s+b(k-1)/2)^{a_{j}}. $$The
        result then follows from the main results of \cite{MR2058610}
        and \cite{MR533066} (in the case $b=1$) together with Theorem 5.1 of \cite{MR610479}.\end{proof}
\begin{corollary}
  \label{sato-tate is true}Suppose that $f$ is a cuspidal newform of level $\Gamma_1(N)$ and weight $k=2$ or $3$. Suppose also that $f$ is potentially Steinberg at $q$ for some prime $q$. Then Conjecture \ref{sato tate conjecture} holds for $f$.
\end{corollary}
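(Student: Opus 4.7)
The plan is to deduce the corollary immediately from Theorem \ref{thm:analytic properties of l-functions} via the general equidistribution criterion of Serre recalled at the beginning of this section (the corollary to Theorem 2 of section I.A.2 of \cite{ser67}). That criterion says that for a compact group $G$, a sequence of conjugacy classes is equidistributed with respect to the normalized Haar measure on $G$ precisely when, for each nontrivial irreducible representation $\rho$ of $G$, the associated $L$-function extends holomorphically and without zeros to the closed half-plane $\Re s \geq 1$ (after suitable normalization). So the whole task reduces to identifying the irreducible representations of $U(2)_{m}$, matching the corresponding $L$-functions with those studied in Theorem \ref{thm:analytic properties of l-functions}, and handling the one-dimensional characters separately.

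First I would list the irreducible representations of $U(2)_{m}$. Since $U(2)_m$ sits inside $U(2)$ with determinant valued in $\mu_m$, its irreducible representations are the restrictions $\det^{a}\otimes\Sym^{b}\C^{2}$ with $0\leq a<m$ and $b\geq 0$, all of which are irreducible and pairwise inequivalent. The trivial representation corresponds to $(a,b)=(0,0)$ and contributes nothing.

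Second, I would match $L$-functions. For $(a,b)$ with $b\geq 1$, the $L$-function attached to $\det^{a}\otimes\Sym^{b}\C^{2}$ via the classes $x_{f,l}$ is, up to the normalizing shift that converts $a_l,b_l$ into $\alpha_l,\beta_l$, precisely $L(\chi_f^{a}\otimes\Sym^{b}f,s+b(k-1)/2)$, so the required region $\Re s\geq 1$ corresponds to $\Re s\geq 1+b(k-1)/2$ for the unshifted $L$-function. Theorem \ref{thm:analytic properties of l-functions} gives exactly the needed holomorphy, nonvanishing, and meromorphic continuation in this region. For $b=0$ and $1\leq a<m$ the relevant $L$-function is the Dirichlet $L$-function $L(\chi_f^{a},s)$, whose holomorphy and nonvanishing on $\Re s\geq 1$ are classical.

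The main obstacle has already been overcome upstream, namely the proof of Theorem \ref{thm:analytic properties of l-functions}, which is where the potential automorphy inputs and the Rankin--Selberg argument do the real work. At the level of this corollary, essentially nothing remains beyond assembling the pieces: invoke Serre's criterion for $G=U(2)_m$, feed in the holomorphy and nonvanishing of $L(\chi_f^{a}\otimes\Sym^{b}f,s)$ in $\Re s\geq 1+b(k-1)/2$ from Theorem \ref{thm:analytic properties of l-functions} for $b\geq 1$, together with the classical facts for $b=0$, and conclude that the conjugacy classes $x_{f,l}$ are equidistributed with respect to the Haar measure on $U(2)_{m}$.
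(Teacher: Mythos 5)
Your argument is exactly the one the paper uses: the paper places the discussion (irreducible representations of $U(2)_m$ are $\det^a\otimes\Sym^b\C^2$, Serre's equidistribution criterion reduces the conjecture to holomorphy and nonvanishing of $L(\chi_f^a\otimes\Sym^b f,s)$ in $\Re s\geq 1+b(k-1)/2$ for $b\geq 1$, with $b=0$ classical) immediately before Theorem \ref{thm:analytic properties of l-functions}, and then states Corollary \ref{sato-tate is true} with no further proof. Your write-up is a correct reconstruction of that same reasoning.
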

Finally, we note that one can make this result more concrete, as one can easily explicitly determine the Haar measure on $U(2)_m$ from that of its finite index subgroup $SU(2)$. One finds that (as already follows from Dirchlet's theorem) the classes $x_{f,l}$ are equidistributed by determinant, and that furthermore the classes with fixed determinant are equidistributed with respect to the natural analogue of the usual Sato-Tate measure. That is, suppose that $\zeta\in\mu_m$, and fix a square root $\zeta^{1/2}$ of $\zeta$. Then any conjugacy class $x_{f,l}$ in $U(2)_m$ with determinant $\zeta$ contains a representative of the form $$\begin{pmatrix}\zeta^{1/2}e^{i\theta_l} & 0\\ 0& \zeta^{1/2}e^{-i\theta_l} \end{pmatrix}$$ with $\theta_l\in[0,\pi]$, and the $\theta_l$ are equidistributed with respect to the measure $\frac{2}{\pi}\sin^2\theta d\theta$.
\bibliographystyle{amsalpha} 

\bibliography{tobybib08} 

\end{document}